\newfont{\Bbb}{msbm10 scaled\magstephalf}
 \newtheorem{thm}{Theorem}[section]
 \newtheorem{cor}[thm]{Corollary}
 \newtheorem{lem}[thm]{Lemma}
 \newtheorem{prop}[thm]{Proposition}
 \theoremstyle{definition}
 \newtheorem{defn}[thm]{Definition}
\theoremstyle{remark}
 \newtheorem{rem}[thm]{Remark}
 \numberwithin{equation}{section}
\newcommand{\hh}{\mathbb{H}}
\newcommand{\pf}{\begin{proof}}
\newcommand{\zb}{\end{proof}}
\newcommand{\mc}{\mathbb{C}_I}
\newcommand{\ms}{\mathbb{S}}
\newcommand{\la}{\langle}
\newcommand{\ra}{\rangle}
 \newcommand{\mf}{\mathcal{F}^2(\mathbb{H})}
\begin{document}
\title[Weighted composition operator ]{ Weighted composition operator on quaternionic Fock space}

\author[P. Lian, Y. X. Liang]{Pan Lian and Yu-Xia Liang}
 \address{\newline College  of Mathematical Science,
Tianjin  Normal University, Tianjin 300387, P.R. China} \email{pan.lian@outlook.com}
\address{College of Mathematical Science,
Tianjin  Normal University, Tianjin 300387, P.R. China.} \email{liangyx1986@126.com}

\keywords{weighted composition operator, quaternionic
Fock space, complex conjugation, boundedness, compactness, closed  expression.}

 \subjclass[2010]{Primary: 30G35, 47B38, 47B15, 30H20.  }


\begin{abstract}

In this paper, we study the weighted composition operator on the Fock space $\mf$ of slice regular functions. First, we  characterize the boundedness and compactness of the weighted composition operator. Subsequently, we describe all the isometric composition operators. Finally, we introduce a kind of (right)-anti-complex-linear weighted composition operator on $\mf$  and obtain some concrete forms such that this  (right)-anti-linear weighted composition operator is a (right)-conjugation. Specially, we present equivalent conditions ensuring   weighted composition operators which are conjugate $\mathcal{C}_{a,b,c}-$commuting  or complex $\mathcal{C}_{a,b,c}-$ symmetric on $\mf$, which generalized the classical results on $\mathcal{F}^2(\mathbb{C}).$ At last part of the paper, we exhibit the closed expression for the kernel function of $\mf.$
\end{abstract}
\maketitle
\section{Introduction}

Complex symmetric operators have become particularly important both in theoretical and many application aspects in recent years(see e.g. \cite{GP}). It's well-known that all scalar (Jordan) models in operator theory are complex symmetric with respect to a well-chosen orthonormal basis (see, e.g. \cite{Ni}).  Technically speaking,  a bounded linear operator $T$ on a complex Hilbert space $\mathcal{H}$ is called $\mathcal{C}$-symmetric if \[\mathcal{C}T=T^*\mathcal{C}\] where $\mathcal{C}$ is an  anti-linear, isometric involution $\mathcal{C}$. This symmetry has strong effects on the spectral picture of $T$. Especially, the generalized eigenspaces $Ker(T-\lambda)^p$ and $Ker (T^*-\overline{\lambda})^p$ are antilinearly isomerically isomorphic via $\mathcal{C}.$  The general study of complex symmetric operators on Hilbert spaces was initiated by Garcia, Putinar and Wogen  (\cite{GP1,GP2,GP3,GW}), which presented naturally many new classes of complex symmetric operators. Thereafter,   quite a bit of attention has been paid to this topic and  many examples of $\mathcal{C}$-symmetric operators were found, such as Volterra integration operators, normal operators and certain Toeplitz operators. Recently, Hai and Khoi presented the weighted composition operators which are complex symmetry on the Fock space  of entire functions in \cite{HK1,HK2, Zhu1}.  In particular, they introduced a new class of  conjugations and found the criteria for bounded weighted composition operators which are complex symmetric with respect to their conjugations.

 In the last ten years, the theory of slice regular functions has equally got a lot of interests. It has  been developed systematically, and  found wide range of applications. It is  especially promising for the study of quaternionic quantum mechanics. Indeed, a new functional calculus  for slice functions was established  which could be considered as  the mathematically  framework of the  quaternionic quantum mechanics. In this paper, we fix our attention on the  weighted composition operators acting on Fock space in the slice setting, which are the set of  regular functions defined on the  quaternions. To our best of knowledge, the complex symmetry analogues  in the hyper-complex (quaternion or Clifford algebra) setting has not been studied, although it is  important for the study of the dynamic properties of the quaternionic composition operators.  

The aim of the present paper is  therefore to explore  and construct complex symmetric operators on the slice holomorphic Fock space. There exist different ways to define the slice regular composition in the literature, which can induce several definitions for weighted composition operators. In this paper, we study the weighted composition operator $W_{f, \varphi}$ for the slice regular functions defined by the slice regular product (see Section 2 ) $$(W_{f,\varphi}g)_{I}(z)=f_{I}\star(g_{I}\circ \varphi_{I})(z)$$ where $\varphi(\mc)$ satisfies the slice condition: $\varphi(\mc)\subset \mc$ for some $I\in \mathbb{S}$. The slice regular product makes the theory of composition operator more complicated than  the complex cases especially when one tries to  determine all the isometric weighted composition operators. Fortunately, by the slice regular product, we could characterize the boundedness, compactness, all the isometric weighted composition as nicely as the complex cases.

 Following the work in  \cite{HK1}, a complex  conjugation associated with the slice $\mc$ on the slice Fock space was introduced.  We offer sufficient and necessary condition for weighted composition operators which are complex symmetric with respect to our general conjugations.

 The paper consists of 6 sections and its outline is as follows.  In Section 2 we set up the basic facts concerning the regular functions, slice hyperholomorphic quaternionic  Fock space and weighted composition operator. In Section 3 we   investigate equivalent characterizations for the boundedness and compactness of the weighted composition operator acting on the slice Fock space. Section 4 is devoted to determine all the isometric weighted composition operators.  In Section 5, we define a general weighted composition   right-conjugation $\mathcal{C}_{a,b,c,d}$ on $\mf$. Then we explore the weighted composition operators which commutes with our special conjugation $\mathcal{C}_{a,b,c}$. Finally we derive a  theorem concerning  bounded weighted composition operators which are complex symmetric with respect to our conjugation $\mathcal{C}_{a,b,c}$ on $\mf.$

\section{preliminaries}
\subsection{Quaternions and slice regular function} The symbol $\mathbb{H}$  denotes the noncommutative, associative, real algebra of quaternions with standard basis $\{1, i, j, k \},$ subject to the multiplication rules $$i^2=j^2=k^2=ijk=-1.$$  That is to say $\hh$ is the set of the quaternions $$q=x_0+x_1i+x_2j+x_3k=Re (q)+ Im (q),$$ with $Re (q)=x_0$ and $ Im(q)=x_1i+x_2j+x_3k$, where $x_j\in \mathbb{R}$ for $j=1,2,3$.  The Euclidean norm  of a quaternion $q$ is given by
$$|q|=\sqrt{q\overline{q}}=\sqrt{x_0^2+x_1^2+x_2^2+x_3^2},$$ where $\overline{q}=Re(q)-Im(q)=x_0-(x_1i+x_2j+x_3k)$ representing the conjugate of $q$.  By the symbol $\mathbb{S}$ we denote the two-dimensional unit sphere of purely imaginary quaternions, i.e. $$\mathbb{S}=\{q=x_1i+x_2j+x_3k:\;x_1^2+x_2^2+x_3^2=1\}.$$ That is, $I^2=-1$ for $I\in \mathbb{S}.$  For any fixed $I\in \mathbb{S}$ we define $$\mathbb{C}_I:=\{x+Iy:\;x,y\in \mathbb{R}\},$$ which can be identified with a complex plane.  In the sequel, an element in the complex plane $\mc=\mathbb{R}+I\mathbb{R}$ is denoted by $x+Iy.$ Moreover, it holds that
$$\mathbb{H}=\bigcup\limits_{I \in \mathbb{S}}\mathbb{C}_I.$$  Interestingly, the real axis belongs to $\mathbb{C}_I$ for every $I\in \mathbb{S}$ and thus a real number can be associated with any imaginary unit $I$. However  any non real quaternion $q$ is uniquely associated to the element $I_q\in \mathbb{S}$ given by \[I_q:=(ix_i+jx_2+kx_3)/|ix_1+jx_2+kx_3|,\] and then $q$ belongs to the complex pane $C_{I_q}$.

Now, we are in a position to give the key concepts of this paper.
\begin{defn} \cite[Definition 2.1.1]{CSS2}\label{regularity} Let $U$ be an open set in $\hh$ and a function $f:\; U\rightarrow \hh$ be real differentiable. The function $f$ is  called \emph{slice regular} or \emph{slice hyperholomorphic} if,  for every $I\in \ms,$ its restriction  \[f_I(x + Iy) = f(x + Iy)\] is holomorphic, i.e. it has continuous partial derivatives and satisfies
$$\overline{\partial_I}f(x+yI):=\frac{1}{2}\left(\frac{\partial}{\partial x}+I\frac{\partial}{\partial y}\right)f_I(x+yI)=0$$ for all  $x+yI\in U\cap \mc$. The class of all slice regular functions on $U$ is denoted by $\mathcal{R}(U).$  Also $\mathcal{R}(U)$ is a right linear space on $\hh.$   \end{defn} Furthermore,  a function  which is  slice regular on $\hh$ will be called \emph{entire slice regular} or \emph{entire slice hyperholomorphic}, which is collected as $\mathcal{R}(\hh).$

Let $I, J \in \mathbb{S}$ be such that $I$ and $J$ are orthogonal, so that $I, J, IJ $ is an orthogonal basis of $\hh$ and write the restriction $f_I$  as the function \[f =f_0+If_1+Jf_2+IJf_3\]  on the complex plane $\mc$. It can also be written as $f_I = F + GJ $ where $f_0 + If_1 = F$, and $f_2 + If_3 = G.$  Hence we have the following splitting lemma, which relates slice regularity with classical holomorphy.
\begin{lem}\cite[Lemma 2.1.4]{CSS2} $($\textbf{Splitting Lemma}$)$ If $f$ is a slice regular function on the domain $U$, then for every $I, J\in \mathbb{S},$ with $I\perp J$, there are two holomorphic functions
$F,\;G:\;U_I=U\cap \mathbb{C}_I\rightarrow \mathbb{C}_I$ such that   $$f_I(z)=F(z)+G(z) J\;\;\mbox{for any} \;z=x+yI\in U_I.$$ \end{lem}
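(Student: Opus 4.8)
The plan is to exploit the real-linear orthogonal decomposition of $\hh$ furnished by the basis $\{1, I, J, IJ\}$, together with the defining equation $\overline{\partial_I} f_I = 0$ of slice regularity. Since $I \perp J$ in $\ms$, the four elements $1, I, J, IJ$ form an orthogonal real basis of $\hh$; moreover the subspace $\mc = \mathbb{R} + I\mathbb{R}$ and its right translate $\mc J = \mathbb{R}J + \mathbb{R}(IJ)$ are complementary, so that every quaternion splits uniquely as an element of $\mc$ plus an element of $\mc J$. This direct-sum structure is what will eventually let me decouple a single quaternionic equation into two scalar ones.

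First I would write the restriction in coordinates. Fixing $z = x + yI \in U_I$ and expanding $f_I(z)$ in the basis $\{1, I, J, IJ\}$ produces real-valued functions $f_0, f_1, f_2, f_3$ on $U_I$ with $f_I = f_0 + If_1 + Jf_2 + IJf_3$. Since the $f_m$ are real they commute with $I$ and $J$, so collecting the first two and the last two terms and factoring $J$ out on the right yields $f_I = (f_0 + If_1) + (f_2 + If_3)J$. Setting $F := f_0 + If_1$ and $G := f_2 + If_3$, both of which visibly take values in $\mc$, delivers the claimed form $f_I(z) = F(z) + G(z)J$ on $U_I$; the continuity of the partial derivatives of $F$ and $G$ is inherited from the real differentiability of $f$.

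It then remains to check that $F$ and $G$ are holomorphic on $U_I$. Here I would apply the operator $\frac{1}{2}(\partial_x + I\partial_y)$ to $f_I = F + GJ$. Using associativity to rewrite $I(\partial_y G)J = (I\partial_y G)J$ with $J$ kept on the right, the vanishing condition $\overline{\partial_I}f_I = 0$ becomes $(\partial_x F + I\partial_y F) + (\partial_x G + I\partial_y G)J = 0$ on $U_I$.

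The main obstacle---really the only delicate point---is the separation of this single quaternionic identity into two scalar Cauchy--Riemann equations, which is where the noncommutativity must be handled with care. Because $F$ and $G$ are $\mc$-valued and $\mc$ is a commutative subfield containing $I$, the quantity $\partial_x F + I\partial_y F$ lies in $\mc = \operatorname{span}_{\mathbb{R}}\{1,I\}$, while $(\partial_x G + I\partial_y G)J$ lies in $\mc J = \operatorname{span}_{\mathbb{R}}\{J, IJ\}$; since these two real subspaces meet only in $0$, both summands must vanish independently, giving $\partial_x F + I\partial_y F = 0$ and $\partial_x G + I\partial_y G = 0$. Under the identification of $\mc$ with $\mathbb{C}$ sending $I$ to the imaginary unit, these are exactly the Cauchy--Riemann equations for $F$ and $G$, so both are holomorphic from $U_I \subset \mc$ to $\mc$, completing the argument. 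Throughout, the discipline required is to keep $J$ on the right at every step and to invoke only the commutation of $I$ with the real coefficients and with the $\mc$-valued functions $F, G$.
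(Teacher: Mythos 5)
Your argument is correct and follows exactly the route the paper itself sketches just before the lemma: decompose $f_I = f_0 + If_1 + Jf_2 + IJf_3$ over the orthogonal basis $\{1, I, J, IJ\}$, set $F = f_0 + If_1$ and $G = f_2 + If_3$, and then split $\overline{\partial_I}f_I = 0$ into two Cauchy--Riemann systems using $\mathbb{C}_I \cap \mathbb{C}_I J = \{0\}$. The paper cites this result from \cite[Lemma 2.1.4]{CSS2} rather than proving it in full, but your completion of the holomorphy step is the standard one and is handled correctly, including the care with right multiplication by $J$.
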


In the sequel, we always denote the symbols $z=x+yI$ and $w=u+vI$  on $\mc$ unless otherwise stated.

 Slice regular functions possess good properties on specific open sets that are called axially symmetric slice domains.

\begin{defn}\cite[Definition 2.2.1]{CSS2} Let $U\subset \hh$ be a domain.\\(1)\; $U$ is called a \emph{slice domain} (or $s$-domain for short) if it intersects the real axis and if, for any $I\in \ms,$ $U_I:=\mc\cap U$ is a domain in $\mc.$ \\ (2)\; $U$ is  \emph{axially symmetric}    if for every  $x+yI\in U$ with $x,y\in \mathbb{R}$ and $I\in \ms,$ all the elements $x+y\ms=\{x+yJ: \;J\in \ms\}$ is contained in $U.$ \end{defn}

The representation formula  of a slice regular function on an axially symmetric domain  allows  to recover  all its values   from its values on a single slice $\mc.$

\begin{prop}\label{Prop RF}\cite[Theorem 2.2.4]{CSS2}$($\textbf{Representation Formula}$)$\; Let $f$ be a slice regular function on an axially symmetric $s$-domain $U\subset \hh.$ Let $J\in \mathbb{S}$ and let $x\pm yJ\in U\cap \mathbb{C}_J,$  then the following equality holds for all $q=x+yI\in U,$
$$f(x+yI)=\frac{1}{2}\left[(1+IJ)f(x-yJ)+(1-IJ)f(x+yJ)\right].$$ \end{prop}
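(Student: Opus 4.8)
The plan is to verify that the right-hand side, regarded as a function of $q=x+yI$, \emph{is} the slice regular function $f$, by checking three things: it is well defined, it is slice regular, and it coincides with $f$ on the slice $\mathbb{C}_J$; an identity-principle argument then forces equality throughout $U$. Concretely, I would set
$$g(x+yI):=\frac{1}{2}\left[(1+IJ)f(x-yJ)+(1-IJ)f(x+yJ)\right],\qquad x+yI\in U.$$
Because $U$ is axially symmetric, $x+yI\in U$ forces the whole sphere $x+y\ms$ into $U$, in particular $x\pm yJ\in U\cap\mathbb{C}_J$, so the values $f(x\pm yJ)$ on the right are defined; note also that they depend only on the real parameters $x,y$, not on the chosen unit $I$.

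First I would show $g\in\mathcal{R}(U)$. Since $f$ is real differentiable so is $g$, and it suffices to check $\overline{\partial_I}g=0$ on each slice $\mc$. Writing $\alpha(x,y):=f(x+yJ)$, the holomorphy of $f_J$ gives the Cauchy--Riemann relation $\alpha_x=-J\alpha_y$, equivalently $\alpha_y=J\alpha_x$; setting $\beta(x,y):=f(x-yJ)=\alpha(x,-y)$ one then obtains $\beta_y=-J\beta_x$. Differentiating $g=\tfrac12[(1+IJ)\beta+(1-IJ)\alpha]$ and using the algebraic identities $I(1+IJ)(-J)=-(1+IJ)$ and $I(1-IJ)J=-(1-IJ)$, both immediate from $I^2=J^2=-1$, a short computation gives $I\,\partial_y g=-\partial_x g$, whence $\overline{\partial_I}g=\tfrac12(\partial_x g+I\partial_y g)=0$. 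This is the computational heart of the argument and the step where the noncommutativity must be handled carefully.

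Next, setting $I=J$ and using $J^2=-1$ collapses the formula to $g(x+yJ)=\tfrac12[(1-1)f(x-yJ)+(1+1)f(x+yJ)]=f(x+yJ)$, so $g$ and $f$ agree on all of $U\cap\mathbb{C}_J$. Since $g$ and $f$ are both slice regular on the axially symmetric $s$-domain $U$ and coincide on the full slice $\mathbb{C}_J$, the Identity Principle for slice regular functions forces $g\equiv f$ on $U$, which is the asserted equality. The main obstacle is the regularity verification above; an instructive cross-check is to observe that, near a real point of the $s$-domain, the power-series expansion shows that $I\mapsto f(x+yI)$ is \emph{affine}, i.e. $f(x+yI)=a+Ib$ with $a,b$ independent of $I$, and solving the two equations obtained by taking $I=\pm J$ reproduces exactly the stated representation.
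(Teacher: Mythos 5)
The paper states this Representation Formula as a citation to \cite[Theorem 2.2.4]{CSS2} and gives no proof of its own, so there is nothing internal to compare against; your argument is correct and is essentially the standard proof from that reference (define the right-hand side, verify $\overline{\partial_I}g=0$ slice by slice via the Cauchy--Riemann relation $\alpha_y=J\alpha_x$ and the identities $I(1\pm IJ)(\mp J)=-(1\pm IJ)$, check agreement on $\mathbb{C}_J$, and conclude by the Identity Principle on the $s$-domain). The only detail worth adding explicitly is that $g$ is well defined under the sign ambiguity $(I,y)\mapsto(-I,-y)$ of a non-real quaternion, which follows by inspection since the substitution merely swaps the two terms.
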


For more on the entire slice regular functions, we refer  to the excellent books \cite{CSS1,CSS2} and the references therein.


\subsection{Fock space in the slice setting}
In the following, we recall  the slice hyperholomorphic quaternionic  Fock space introduced in \cite{ACSS}.
\begin{defn} \cite[Definition 3.6]{ACSS} Let $I$ be any elements in $\mathbb{S}$ and $p|_{\mc}=z$,  consider the set
$$ \mathcal{F}^2(\mathbb{H})=\{f\in \mathcal{R}(\mathbb{H}):\;\int_{\mathbb{C}_I} e^{-|z|^2}|f_I(z)|^2 d\sigma(x,y)<\infty\}$$ where $d\sigma(x,y):=\frac{1}{\pi} dxdy.$ We  call $\mathcal{F}^2(\mathbb{H})$ (slice hyperholomorphic or slice regular) quaternionic Fock space.\end{defn}
 Here $\mathcal{F}^2(\mathbb{H})$ is endowed with the inner product $$\langle f, g \rangle =\int_{\mathbb{C}_I} e^{-|z|^2}\overline{g_I(z)}f_I(z) d\sigma(x,y),$$ which has been prove the definition of Fock space does not depend on the imaginary unit $I\in \mathbb{S},$ (see, e.g. \cite[Proposition 3.8]{Le}).   The norm induced by the inner product is
\begin{align} \|f\|=\left(\int_{\mathbb{C}_I} e^{-|z|^2} |f_I(z)|^2 d\sigma(x,y)\right)^{1/2}. \label{norm}  \end{align} It has been shown that $\mathcal{F}^2(\hh)$ contains the monomials $p^n$ ($n\in \mathbb{N}$), which form an orthogonal basis and $\la p^n,p^n\ra=n!$. Moreover, a function \[f(p)=\sum_{m=0}^\infty p^m a_m\in \mathcal{F}^2(\mathbb{H})\] if and only if $\sum_{m=0}^\infty |a_m|^2m!<\infty.$

Given a variable $p\in \mathbb{H}$ and a parameter $q\in \mathbb{H}$, we set
\begin{eqnarray}e_{*} ^{pq}=\sum_{n=0}^{+\infty} \frac{(pq)^{*n}}{n!}=\sum_{n=0}^{+\infty} \frac{(pq)^{\star n}}{n!}=\sum_{n=0}^{+\infty} \frac{p^nq^n}{n!},\label{closed}\end{eqnarray} it is immediate that  $e_{*}^{pq}$ is a function left slice regular in $p$ and right regular in $q$. The reproducing kernel of  $\mathcal{F}^2(\mathbb{H})$ is given by $K_q(p):=e_{*}^{p\overline{q}}$, and it holds that  $\langle f, K_q\rangle=f(q)$ for any $f\in \mathcal{F}^2(\mathbb{H})$ (see, e.g. \cite[Theorem 3.10]{ACSS}). Besides,
\begin{align}\|K_q\|^2=\la K_q, K_q\ra=K_q(q)=e^{|q|^2}.\label{ker-norm} \end{align} Furthermore we  denote $k_q(p)=K_q(p)/\|K_q\|$, which is a unit-vector in $\mf.$ More generally, we obtain
$$f^{(m)}(u)=\int_{\mc}f_I(z)\overline{z}^m e_*^{\overline{z}u}e^{-|z|^2}d\sigma(x,y)=\la f, K_u^{[m]}\ra,$$ for $K_u^{[m]}(q)=q^mK_u(q),$ the $m-$th derivative evaluation kernel. Particularly, we  take $u=0$ and obtain $P_m(p)=p^m:=K_0^{[m]}(p),\;m\geq 0,$ which will be used in Section 6.

\subsection{Weighted composition operators} In this subsection, we introduce the weighted composition operator $W_{f,\varphi}$ defined on $\mf$.

  Since pointwise product of functions does not preserve slice regularity,  a new multiplication operation, the $\star$-product, was introduced in order to maintain the regularity in  \cite{CSS1,CSS2}.  In the special case of power series, the regular product (or  $\star$-product) is given below. Let $U$ be  a ball with center at a real point, $f(q)=\sum_{n=0}^\infty q^n a_n,$ $a_n\in \mathbb{H}$ and $g(q)=\sum_{n=0}^\infty q^n b_n$ with $b_n\in \hh$,  the regular product of $f$ and $g$ is defined as
$$(f\star g)(q):=\sum_{n=0}^\infty q^n \left(\sum_{r=0}^n a_r b_{n-r}\right).$$ In this case, the notion $\star$-product coincides with the classical notion of product of series with coefficients in a ring. It is easy to see the function $f\star g$ is   slice hyperholomorphic. The regular product  was further generalized to the case of regular functions defined  on axially symmetric $s$-domains.  Let $U\subset \hh$ be an axially symmetric s-domain and let $f,g:\; U\rightarrow \hh$ be slice regular functions. For any $I, J\in\ms$ with $I\perp J,$ the Splitting Lemma guarantees the existence of four holomorphic functions $F, G, H, K: \;U\cap \mc\rightarrow \mc$ such that for all $z=x+yI\in U\cap \mc,$
$$f_I(z)=F(z)+G(z)J,\;\;g_I(z)=H(z)+K(z)J.$$ Then $f_I\star g_I:\;U\cap \mc\rightarrow \mc$ is defined as
\begin{eqnarray}&&f_I\star g_I (z)\nonumber\\&=&[F(z)H(z)-G(z)\overline{K(\overline{z})}]+[F(z)K(z)+G(z)\overline{H(\overline{z})}]J.\;\;\;
\label{star} \end{eqnarray}
The new function $f_I\star g_I$ is  a holomorphic map and hence it admits an unique slice regular extension to $U$ defined by $ext(f_I\star g_I)(q)$. Hence,
\begin{defn}\label{defn regular-product} Let $U\subset \hh$ be an axially symmetric $s$-domain and let $f, g:\;U\rightarrow  \hh$ be slice regular. The function  $$(f\star g)(q)=ext(f_I\star g_I)(q)$$ defined as the extension of  \eqref{star} (using Proposition \ref{Prop RF}) is called the slice regular product of $f$ and $g$.  \end{defn}

 \begin{rem}By the definition of $\star$-product we obtain  \begin{eqnarray}J\star H(z) =\overline{ H(\overline{ z})}J.\label{starH}\end{eqnarray}
\end{rem}
\begin{rem} By formula (\ref{star}), the star product could be formally defined on any two quaternion valued function. We will use this notation in Section 3  to make the similarity  between the slice setting and the classical case clear.
\end{rem}

The $\star-$product can be described  by the pointwise product as  the following Proposition.
\begin{prop}\cite[Theorem 2.3.10]{CSS2} Let $U\subset \hh$ be an axially symmetric $s-$domain,
 $f, g: U\rightarrow \hh$ be slice hyperholomorphic functions. Then
\begin{eqnarray}(f\star g)(p)=f(p)g(f(p)^{-1}pf(p)),\label{star*}\end{eqnarray}for all $p\in U,$ $f(p)\neq 0,$ while $(f\star g)(p)=0$ when $p\in U,  f(p)=0.$ \end{prop}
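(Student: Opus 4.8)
The plan is to verify the identity on a single slice and then invoke the uniqueness of the slice regular extension. Fix an arbitrary $I\in\ms$ and choose $J\in\ms$ with $I\perp J$, and write a generic point of the chosen slice as $p=z=x+yI\in\mc$. By the Splitting Lemma I write $f_I=F+GJ$ and $g_I=H+KJ$ with $F,G,H,K$ holomorphic from $U\cap\mc$ into $\mc$, so that $a:=f(z)=F(z)+G(z)J$. Assuming $a\neq 0$, conjugation by $a$ fixes the (real, hence central) real part of $z$, so that
\[
f(z)^{-1}zf(z)=a^{-1}(x+yI)a=x+yI',\qquad I':=a^{-1}Ia ,
\]
and $(I')^2=a^{-1}I^2a=-1$ forces $I'\in\ms$. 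Thus $g$ is to be evaluated at the point $x+yI'$ lying on the (generally different) slice $\mathbb{C}_{I'}$.

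The first key step is to pull this value back to the reference slice $\mc$ by the Representation Formula (Proposition \ref{Prop RF}), applied with target imaginary unit $I'$ and reference unit $I$:
\[
g(x+yI')=\tfrac12\big[(1+I'I)\,g(x-yI)+(1-I'I)\,g(x+yI)\big].
\]
Left-multiplying by $a$ and using $a\,I'I=a(a^{-1}Ia)I=IaI$, I rewrite $a(1\pm I'I)=a\pm IaI$, so that both the conjugating factor $a$ and the unknown unit $I'$ are eliminated. The heart of the matter is then the algebraic simplification of $IaI$: since $F,G\in\mc$ commute with $I$ and $IJI=J$ (as $IJ=-JI$), one gets $IaI=-F+GJ$, whence $a+IaI=2GJ$ and $a-IaI=2F$. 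This collapses $f(z)\,g(f(z)^{-1}zf(z))$ to $F\,g(z)+GJ\,g(\overline z)$ with $\overline z=x-yI$.

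It then remains to expand $F\,g(z)+GJ\,g(\overline z)$ using $g(z)=H(z)+K(z)J$, $g(\overline z)=H(\overline z)+K(\overline z)J$ and the commutation rule \eqref{starH}, namely $J\,w=\overline{w}\,J$ for $w\in\mc$. This produces
\[
f(z)\,g\big(f(z)^{-1}zf(z)\big)=\big[F H-G\,\overline{K(\overline z)}\big]+\big[F K+G\,\overline{H(\overline z)}\big]J ,
\]
which is precisely the right-hand side of \eqref{star}, i.e. $f_I\star g_I(z)$. Since $I\in\ms$ was arbitrary and every $p\in U$ lies on some slice $\mc$, the identity $(f\star g)(p)=f(p)g(f(p)^{-1}pf(p))$ holds at every $p$ with $f(p)\neq0$; the case $f(p)=0$ is immediate, because then $F(z)=G(z)=0$ and \eqref{star} gives $f_I\star g_I(z)=0$. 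I expect the only real difficulty to be tracking the non-commutativity faithfully — in particular fixing the orientation in the Representation Formula and verifying $IaI=-F+GJ$ — while the conceptual mechanism $a(1\pm I'I)=a\pm IaI$ is what makes the pointwise and the $\star$-product descriptions agree.
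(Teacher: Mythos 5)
Your argument is correct, and it is worth noting that the paper itself offers no proof of this statement: it is quoted from \cite[Theorem 2.3.10]{CSS2}, so there is no in-paper argument to compare against. Your verification is a clean, self-contained derivation: the conjugation $f(p)^{-1}pf(p)=x+yI'$ with $I'=a^{-1}Ia\in\ms$, the pull-back to the reference slice via the Representation Formula, and the algebraic identities $a(1\pm I'I)=a\pm IaI$, $IaI=-F+GJ$ are all correct, and the final expansion does reproduce the right-hand side of \eqref{star} exactly (I checked the signs; e.g.\ $GJ\,K(\overline z)J=-G\overline{K(\overline z)}$ gives the minus sign in the first component). The only step you use silently is that the restriction of $(f\star g)=ext(f_{I_0}\star g_{I_0})$ to an arbitrary slice $\mc$ coincides with $f_I\star g_I$ computed intrinsically on that slice, i.e.\ the slice-independence of Definition \ref{defn regular-product}; this consistency is standard and implicit in the paper's own use of the $\star$-product, but a fully rigorous write-up should cite or record it, since your computation is carried out slice by slice while the extension is defined from a single slice. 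The degenerate case $f(p)=0$ is handled correctly, as $F(z)=G(z)=0$ forces both components of \eqref{star} to vanish.
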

 \begin{rem}\label{rem R} Let $p\in \mathbb{R},$ then $(f\star g)(p)f(p)g(f(p)^{-1}pf(p))=f(p)g(p),$ which implies that the regular product of two slice function reduces to the usual product when the variable takes real values.  \end{rem}

  Let $\varphi:\;\hh\rightarrow \hh$ be a slice hyperholomorphic map such that $\varphi(\mc)\subset \mc$ for some $I\in \ms.$ The composition operator $C_\varphi$ on $\mf$ induced by $\varphi$ is defined  by
\[(C_\varphi f)_I(z)= (f_I \circ \varphi_I)(z)=F\circ\varphi_I (z)+G\circ \varphi_I(z)J \] for all $f\in \mf$. By the representation formula (Proposition \ref{Prop RF}), we  get the extension $C_\varphi h$ to the whole $\hh.$ The systematical research of composition operators on  holomorphic functions in one complex variables  is a fairly active field, see the two books \cite{CM} by Cowen and MacCluer, and \cite{Sh} by Shapiro. As regards to  recent topic about composition operators on slice regular spaces of hyperholomorphic functions, we refer to \cite{KM,VCG} and their references therein.

 Combining the definition of slice regular product in Definition \ref{defn regular-product} and the composition operator, we define the weighted composition operators  $W_{f,\varphi}:\;\mf\rightarrow \mf$ with $\varphi:\;\mc\rightarrow \mc$  for some $I\in \ms$ as
$$(W_{f,\varphi} h)_I(z)= f_I(z)\star (h_I\circ \varphi_I)(z)$$ for all $f\in \mf$.   The extension $W_{f,\varphi} h$  on  $\hh$ is obtained by the representation formula.  Here we also note that
$$[W_{f,\varphi}(g a+ hb)]_I(z)=(W_{f,\varphi} g)_I(z) a+(W_{f,\varphi} h)_I(z)b$$ for any $g, h\in \mf$ and $a, b \in\hh,$ which means $W_{f,\varphi}$ are right-linear on $\mf.$

\section{Boundedness and  compactness}

 Let $f$ and $\varphi$ be two slice regular functions on $\mf$  such that $f$ is not identically zero and $\varphi(\mc)\subset \mc$ for some $I\in\ms$. In this section we will systematically investigate  the boundedness and compactness of weighted composition operators $W_{f,\varphi}$ on $\mf.$  In the following,  the adjoint of the weighted composition operator $W_{f,\varphi}$ is denoted  by $W_{f,\varphi}^*$.  We will always choose
 $J\in\ms$ with $I\perp J$ and express \begin{eqnarray} f_I(w)=F(w)+G(w)J\label{fI}\end{eqnarray} with two holomorphic functions $F, G: \mc\rightarrow \mc.$

 For the reproducing kernel $K_p\in \mf,$ we have
\begin{align*} \la W_{f,\varphi}^* K_p, K_q\ra=\la K_p, W_{f,\varphi} K_q\ra =\overline{\la W_{f,\varphi}K_q,K_p\ra},\end{align*} which entails that
\begin{align}W_{f,\varphi}^* K_p(q)= \overline{W_{f,\varphi}K_q(p)}.\label{ker}\end{align}

Now, we give a sufficient and necessary characterizations for bounded weighted composition operator on $\mf.$

 \begin{thm} \label{thm bou}Let $f$ and $\varphi$ be two slice regular functions on $\mathbb{H},$ such that $f$ is not identically zero and $\varphi(\mathbb{C}_I) \subset \mathbb{C}_I$ for some $I\in \mathbb{S}$.  Then the operator $W_{f,\varphi}:\;\mf\rightarrow \mf$ is bounded if and only if $f\in \mathcal{F}^2(\mathbb{H}),$ $\varphi(p)=p\lambda+\varphi(0)$ with $|\lambda|\leq 1$, $\lambda\in \mc$ and
 \begin{eqnarray} \hat{M}(f,\varphi):&=&\sup\limits_{w\in \mc}\left(|F(w)|^2 e^{|\varphi(w)|^2-|w|^2}+ |G(w)|^2e^{|\varphi(\overline{w})|^2-|w|^2}\right)
 \nonumber\\&=&\sup\limits_{w\in \mc}\left| f_{I}(w)\star e^{\frac{|\varphi(w)|^2-|w|^2}{2}}\right|^{2} <\infty.\label{Mf1}\;\;\;\;\;\end{eqnarray}
  \end{thm}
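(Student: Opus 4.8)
The plan is to reduce the whole statement to the classical theory of weighted composition operators on the complex Fock space $\mathcal{F}^2(\mc)$ by means of the Splitting Lemma. Fix $J\in\ms$ with $I\perp J$ and write $f_I=F+GJ$ as in \eqref{fI}; for $h\in\mf$ write $h_I=P+QJ$ with $P,Q:\mc\to\mc$ holomorphic. Since $|h_I(z)|^2=|P(z)|^2+|Q(z)|^2$, the assignment $h\mapsto(P,Q)$ is an isometric identification of $\mf$ with $\mathcal{F}^2(\mc)\oplus\mathcal{F}^2(\mc)$, with $\|h\|^2=\|P\|^2+\|Q\|^2$ (norms of $\mathcal{F}^2(\mc)$). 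First I would compute $(W_{f,\varphi}h)_I$ explicitly: because $(h_I\circ\varphi_I)(z)=P(\varphi(z))+Q(\varphi(z))J$, formula \eqref{star} gives
\[
(W_{f,\varphi}h)_I(z)=\bigl[F(z)P(\varphi(z))-G(z)\overline{Q(\varphi(\overline{z}))}\bigr]+\bigl[F(z)Q(\varphi(z))+G(z)\overline{P(\varphi(\overline{z}))}\bigr]J .
\]
This exhibits $W_{f,\varphi}$ as a $2\times2$ block operator on $\mathcal{F}^2(\mc)\oplus\mathcal{F}^2(\mc)$ whose entries are the \emph{linear} classical weighted composition operator $W_{F,\varphi}u=F\cdot(u\circ\varphi)$ and the \emph{anti-linear} twisted operator $\widetilde W_Gu(z)=G(z)\overline{u(\varphi(\overline{z}))}$.

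For sufficiency, suppose $f\in\mf$ (equivalently $F,G\in\mathcal{F}^2(\mc)$), $\varphi(p)=p\lambda+\varphi(0)$ with $\lambda\in\mc$, $|\lambda|\le1$, and $\hat M(f,\varphi)<\infty$. The change of variable $z\mapsto\overline{z}$, which preserves $d\sigma$ and $|z|$, identifies $\widetilde W_G$ with the classical weighted composition operator of symbol $\varphi$ and weight $z\mapsto G(\overline{z})\in\mathcal{F}^2(\mc)$, so that $\|\widetilde W_Gu\|^2=\int_{\mc}|G(\overline{z})|^2|u(\varphi(z))|^2e^{-|z|^2}\,d\sigma$. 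Since the finiteness of $\hat M$ in \eqref{Mf1} splits into $\sup_w|F(w)|^2e^{|\varphi(w)|^2-|w|^2}<\infty$ and $\sup_w|G(w)|^2e^{|\varphi(\overline{w})|^2-|w|^2}<\infty$, the classical boundedness criterion on $\mathcal{F}^2(\mc)$ applies to both $W_{F,\varphi}$ and $\widetilde W_G$; hence each block entry, and therefore $W_{f,\varphi}$ itself, is bounded.

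For necessity, applying $W_{f,\varphi}$ to the constant $1$ gives $W_{f,\varphi}1=f$ (indeed $f_I\star1=f_I$), so boundedness forces $f\in\mf$. To recover the remaining conditions I would restrict $W_{f,\varphi}$ to the isometric copy of $\mathcal{F}^2(\mc)$ inside $\mf$ consisting of those $h$ with $Q\equiv0$; on this subspace the displayed formula has no cross terms and yields $\|W_{f,\varphi}h\|^2=\|W_{F,\varphi}P\|^2+\|\widetilde W_GP\|^2$. Consequently both $W_{F,\varphi}$ and $\widetilde W_G$ are bounded on $\mathcal{F}^2(\mc)$, with norm at most $\|W_{f,\varphi}\|$, so by the classical criterion both suprema above are finite and $\hat M(f,\varphi)<\infty$. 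Finally, as $f\not\equiv0$ at least one of $F,G$ is not identically zero, and boundedness of the corresponding nonzero classical weighted composition operator forces, via the standard Liouville-type growth argument on the Fock space, the symbol $\varphi$ to be affine; since $\varphi(\mc)\subset\mc$ this reads $\varphi(p)=p\lambda+\varphi(0)$ with $\lambda\in\mc$ and $|\lambda|\le1$.

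It remains to check the second equality in \eqref{Mf1}: treating $c(w)=e^{(|\varphi(w)|^2-|w|^2)/2}$ as a real scalar symbol and using the formal $\star$-product together with \eqref{starH}, one finds $f_I(w)\star c(w)=F(w)c(w)+G(w)c(\overline{w})J$, whose squared modulus is exactly the first line of \eqref{Mf1}. The \textbf{main obstacle} is the anti-linear twist $\widetilde W_G$: the occurrence of $\varphi(\overline{z})$ together with the conjugation means $W_{f,\varphi}$ is not a plain direct sum of two complex operators, and the block operator carries genuine cross terms. The device that dissolves this is to test necessity on the subspace $Q\equiv0$ (which annihilates the cross terms) and, for sufficiency, to use only the upper bound from boundedness of the two diagonal-type entries; identifying $\widetilde W_G$ with an honest classical weighted composition operator through $z\mapsto\overline{z}$ is what lets the complex Fock-space theory transfer verbatim.
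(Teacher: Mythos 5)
Your proposal is correct in substance, but it packages the argument differently from the paper. Both proofs rest on the same splitting $f_I=F+GJ$, $h_I=P+QJ$ and on the same two scalar conditions $\sup_w|F(w)|^2e^{|\varphi(w)|^2-|w|^2}<\infty$ and $\sup_w|G(w)|^2e^{|\varphi(\overline{w})|^2-|w|^2}<\infty$; the difference is that the paper re-derives everything from scratch inside $\mf$ --- for necessity it computes $\|W_{f,\varphi}^*K_p\|^2=|F(w)|^2e^{|\varphi_I(w)|^2}+|G(w)|^2e^{|\varphi_I(\overline{w})|^2}$ by expanding the $\star$-product of $f_I$ with the kernel series and using orthogonality of monomials, then invokes \cite[Proposition 2.1]{Le} and the Representation Formula to get $\varphi(p)=p\lambda+\varphi(0)$ on all of $\hh$; for sufficiency it proves the pointwise bound $|f_I\star(h\circ\varphi)_I|^2\le 2|F|^2|(h\circ\varphi)_I(z)|^2+2|G|^2|(h\circ\varphi)_I(\overline z)|^2$ and changes variables --- whereas you identify $\mf\cong\mathcal{F}^2(\mc)\oplus\mathcal{F}^2(\mc)$ and read $W_{f,\varphi}$ as a $2\times2$ block operator with entries $W_{F,\varphi}$ and the twisted operator $\widetilde W_G$, so that the classical complex Fock-space criterion can be cited as a black box. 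Your route is cleaner and makes the reduction to $\mathcal{F}^2(\mathbb{C})$ transparent (your restriction to the subspace $Q\equiv0$ is exactly the right device to kill the cross terms in the necessity direction, and is in effect what the paper's kernel computation achieves), but two points need to be tightened for it to be fully rigorous. First, after the substitution $z\mapsto\overline z$ the weight of $\widetilde W_G$ is $z\mapsto G(\overline z)$, which is \emph{anti}-holomorphic, so the classical criterion does not literally apply; you should compose with the conjugation $u\mapsto\overline{u(\overline{\,\cdot\,})}$ so that $\widetilde W_G$ becomes the honest linear weighted composition operator with holomorphic weight $z\mapsto\overline{G(\overline z)}$ and symbol $\varphi$ (this costs nothing since moduli are preserved), or else test $\widetilde W_G$ directly on normalized kernels. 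Second, in the sufficiency direction the change-of-variable estimate requires $\lambda\neq0$; the case $\varphi\equiv\varphi(0)$ must be handled separately using $f\in\mf$ and the evaluation functional, as the paper does. Neither issue affects the validity of your overall strategy.
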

\pf \emph{Necessity}. Suppose  $W_{f,\varphi}$ is bounded on $\mathcal{F}^2(\mathbb{H}),$ then  \[f=f\star 1=W_{f,\varphi} 1\in \mathcal{F}^2(\mathbb{H}).\] On the other hand, we know \begin{align} \|W_{f,\varphi}\|^2=\|W_{f,\varphi}^*\|^2\geq \frac{\|W_{f,\varphi}^* K_p\|^2}{\|K_p\|^2}.\label{Wf-norm} \end{align}
With the help of \eqref{ker}, the norm $\|W_{f,\varphi}^* K_p\|^2$  becomes
\begin{eqnarray} &&\|W_{f,\varphi}^* K_p\|^2\nonumber\\&=&\|W_{f,\varphi}^* K_p(q)\|^2=\|\overline{W_{f,\varphi} K_q(p)}\|^2\nonumber\\&=&
\|W_{f,\varphi} K_q(p)\|^2=\|f(p)\star K_q\circ \varphi(p)\|^2\nonumber\\&=&\int_{\mc}\left|f(p)\star \sum_{n=0}^\infty \frac{(\varphi(p))^{\star n}\overline{q}^n}{n!}\right|^2 e^{-|q|^2} d\sigma(x,y) \nonumber\\&= &\int_{\mc}\left|f(p)\star \sum_{n=0}^\infty \frac{(\varphi(p))^{\star n}\overline{z}^n}{n!}\right|^2 e^{-|z|^2} d\sigma(x,y)\\&\geq&
\int_{\mc}\left|f_I(w)\star \sum_{n=0}^\infty \frac{(\varphi_I(w))^{\star n}\overline{z}^n}{n!}\right|^2 e^{-|z|^2} d\sigma(x,y),\label{norm1} \end{eqnarray} where $q|_{\mc}= z=x+yI \in\mc$  and $p|_{\mc}=w=u+vI\in\mc$.

Now we express the modulus of the regular product \[f(p)\star \sum_{n=0}^\infty \frac{(\varphi_I(p))^{\star n}\overline{z}^n}{n!}\] explicitly.   Since $\varphi:\;\mc \rightarrow \mc,$ it follows that $$\varphi(p)^{{\star}n}=ext(\varphi_I(w)^{\star n})=ext (\varphi_I(w)^n).$$ Hence, on $\mc$,  by the operation rule \eqref{star} it turns out
 \begin{eqnarray*} &&f_I(w)\star \sum_{n=0}^\infty \frac{(\varphi_I(w))^{\star n}\overline{z}^n}{n!} \nonumber\\&=&(F(w)+G(w)J)\star \sum_{n=0}^\infty\frac{(\varphi_I(w))^{ n}\overline{z}^n}{n!} \nonumber\\&=&
 F(w) \cdot \sum_{n=0}^\infty\frac{(\varphi_I(w))^{ n}\overline{z}^n}{n!}+ G(w)\cdot \sum_{n=0}^\infty \frac{\overline{(\varphi_I(\overline{w}))^{ n}\overline{z}^n}}{n!} \cdot J,   \end{eqnarray*} which ensures
 \begin{eqnarray} &&\left|f(p)\star \sum_{n=0}^\infty \frac{(\varphi_I(p))^{\star n}\overline{z}^n}{n!}\right|^2\nonumber\\& =&\left|  F(w) \cdot \sum_{n=0}^\infty\frac{(\varphi_I(w))^{ n}\overline{z}^n}{n!}\right|^2+\left| G(w)\cdot \sum_{n=0}^\infty \frac{\overline{(\varphi_I(\overline{w}))^{ n}\overline{z}^n}}{n!} \right|^2\nonumber\\&=&\left|  F(w) \right|^2\left|  \sum_{n=0}^\infty\frac{(\varphi_I(w))^{ n}\overline{z}^n}{n!}\right|^2+\left| G(w)\right|^2\left| \sum_{n=0}^\infty \frac{ (\varphi_I(\overline{w}))^{ n}\overline{z}^n}{n!} \right|^2 .\nonumber\end{eqnarray}

 Therefore, the display \eqref{norm1} becomes
 \begin{eqnarray} &&\|W_{f,\varphi}^* K_p\|^2\nonumber\\&=&\int_{\mc} \left|  F(w) \right|^2\left|  \sum_{n=0}^\infty\frac{(\varphi_I(w))^{ n}\overline{z}^n}{n!}\right|^2e^{-|z|^2} d\sigma(x,y) \nonumber\\&&+\int_{\mc} \left| G(w)\right|^2\left| \sum_{n=0}^\infty \frac{ (\varphi_I(\overline{w}))^{ n}\overline{z}^n}{n!} \right|^2  e^{-|z|^2} d\sigma(x,y) \nonumber\\&=&   \left|  F(w) \right|^2 \sum_{n=0}^\infty \frac{|\varphi_I(w)|^{2n} n!}{(n!)^2}   +  \left| G(w)\right|^2 \sum_{n=0}^\infty \frac{ |\varphi_I(\overline{w})|^{2 n} n!}{(n!)^2}\nonumber\\&=& |F(w)|^2 e^{|\varphi_I(w)|^2}+|G(w)|^2 e^{|\varphi_I(\overline{w})|^2}.\label{norm*}  \end{eqnarray} Employing \eqref{ker-norm}, it yields that
 \begin{eqnarray*}\infty&>& \frac{\|W_{f,\varphi}^* K_p\|^2}{\|K_p\|^2}\\&=&\frac{|F(w)|^2 e^{|\varphi_I(w)|^2}+|G(w)|^2 e^{|\varphi_I(\overline{w})|^2}}{e^{|w|^2}}\nonumber\\&=& |F(w)|^2 e^{|\varphi_I(w)|^2-|w|^2}+|G(w)|^2 e^{|\varphi_I(\overline{w})|^2-|w|^2}.   \end{eqnarray*}   
Therefore we  obtain \begin{align}\sup\limits_{w\in \mc} |F(w)|^2 e^{|\varphi_I(w)|^2-|w|^2}+|G(\overline{w})|^2 e^{|\varphi_I( w)|^2- |w|^2}<\infty.\label{ww1} \end{align}
  The above inequality contains the following two inequalities,
  \begin{eqnarray}&& \sup\limits_{w\in \mc} |F(w)|^2 e^{|\varphi_I(w)|^2-|w|^2}<+\infty,\label{F1} \\&& \sup\limits_{w\in \mc}  |G(w)|^2e^{|\varphi_I(\overline{w})|^2-|w|^2} <+\infty.\label{F2}\end{eqnarray}
   Any one of the above inequalities together with
  \cite[Proposition 2.1]{Le}   imply that \[\varphi_I(w)=\varphi_I(0)+w \lambda\]
 with some $|\lambda|\leq 1$ for $w,\;\lambda\in \mc.$ By the representation formula (Proposition  \ref{Prop RF}), we extend $\varphi_I$ into the whole  $\hh$ \[\varphi(p)=\varphi(0)+p\lambda\] with $\lambda\in \mc$ and $|\lambda|\leq 1$. Indeed,  for $p=x+yI$ and $q=x+yJ,$  we have \begin{eqnarray*} \varphi(p)&=&\frac{1}{2}(1-IJ)\varphi(q)+\frac{1}{2}(1+IJ)\varphi(\overline{q}) \nonumber\\&=& \frac{1}{2}(1-IJ)(\varphi(0)+q\lambda)+\frac{1}{2}(1+IJ)(\varphi(0)+\overline{q}\lambda) \nonumber\\&=& \varphi(0)+\frac{1}{2}[(q+\overline{q})+IJ(\overline{q}-q)\lambda]\nonumber\\&=& \varphi(0)+\frac{1}{2}[2x+IJ(-2Jy)\lambda]\nonumber
 \\&=&\varphi(0)+(x+Iy)\lambda\nonumber\\&=&\varphi(0)+p\lambda.
 \end{eqnarray*}

\emph{Sufficiency}. We  show  the operator $W_{f,\varphi}: \mf\rightarrow \mf$ is bounded. Firstly, we suppose that $\lambda\neq 0.$ For any $h\in \hh,$ it follows that  \begin{eqnarray} &&\|W_{f,\varphi} h\|^2\nonumber\\&=&\int_{\mc}|[f\star (h\circ \varphi)]_I(p)|^2 e^{-|p|^2} d\sigma(x,y)\nonumber\\&=&\int_{\mc}|f_I\star (h\circ \varphi)_I(z)|^2 e^{-|z|^2} d\sigma(x,y).\label{h-norm}\end{eqnarray}
Denoting $f_I(z)= F(z)+G(z)J$ and $(h\circ \varphi)_I=H(z)+K(z)J,$  the display \eqref{star} becomes
 \begin{eqnarray*}&& f_I(z)\star (h\circ \varphi)_I(z)\nonumber\\&=&(F(z)H(z)-G(z)\overline{K(\overline{z})})+
 (F(z)K(z)+G(z)\overline{H(\overline{z})})J .\end{eqnarray*} Thus
 \begin{eqnarray}&& |f_I(z)\star (h\circ \varphi)_I(z)|^2\nonumber\\&=&|F(z)H(z)-G(z)\overline{K(\overline{z})}|^2+|F(z)K(z)+G(z)\overline{H(\overline{z})}|^2 \nonumber\\&\leq & 2(|F(z)H(z)|^2+|G(z)\overline{K(\overline{z})}|^2+|F(z)K(z)|^2+|G(z)\overline{H(\overline{z})}|^2 )\nonumber\\&=&2 |F(z)|^2(|H(z)|^2+|K(z)|^2)+2|G(z)|^2(|H(\overline{z})|^2+|K(\overline{z})|^2)
 \nonumber\\&=&2|F(z)|^2 |(h\circ \varphi)_I(z)|^2+2|G(z)|^2 |(h\circ \varphi)_I(\overline{z})|^2.\label{II1} \end{eqnarray}
 Putting \eqref{II1} into \eqref{h-norm}, we obtain
  \begin{eqnarray} &&\|W_{f,\varphi} h\|^2  \nonumber\\&\leq&  2\int_{\mc} |F(z)|^2 |(h\circ \varphi)_I(z)|^2 e^{-|z|^2} d\sigma(x,y)\nonumber\\&&+2\int_{\mc} |G(z)|^2 |(h\circ \varphi)_I(\overline{z})|^2 e^{-|z|^2} d\sigma(x,y)\;\;\;\;\;~~\label{computation}\\&\leq 2&\sup\limits_{z\in \mc} |F(z)|^2 e^{|\varphi_I(z)|^2-|z|^2}\int_{\mc}|(h\circ \varphi)_I( z)|^2 e^{-|\varphi_I(z)|^2} d\sigma(x,y)\nonumber\\&&+2\sup\limits_{z\in \mc} |G(z)|^2 e^{|\varphi_I(\overline{z})|^2-|z|^2}\int_{\mc}|(h\circ \varphi)_I(\overline{z})|^2 e^{-|\varphi_I(\overline{z})|^2} d\sigma(x,y) \nonumber\\&=& 2\hat{M}(f,\varphi)\int_{\mc}|(h\circ \varphi)_I(z)|^2 e^{-|\varphi_I(z)|^2} d\sigma(x,y) \nonumber\\&=& 2\hat{M}(f,\varphi)|\lambda|^{-2}\|h\|^2, \label{Suff1}\end{eqnarray}  where the change of variable $w=\varphi(z)$ was used in the last line. Therefore,  the operator $W_{f,\varphi}$  is bounded on $\mf$ for $\lambda\neq 0$.

   For the case $\lambda=0,$ it holds that $\varphi(z)=\varphi(0).$ Taking any $h\in\mf,$ we have  $$W_{f,\varphi} h=f\star ( h\circ\varphi(0))=f\star \la h, K_{\varphi(0)}  \ra\in \mf$$  due to $f\in \mf.$ As a matter of fact, similar to \eqref{II1}, it yields that  \begin{eqnarray*}&&|f_I(z)\star \la h, K_{\varphi(0)}\ra|^2\nonumber\\&\leq&   2|F(z)|^2 |(\la h, K_{\varphi(0)}\ra)_I(z)|^2+2|G(z)|^2 |(\la h, K_{\varphi(0)}\ra)_I(\overline{z})|^2\nonumber\\&\leq & 2|f_I(z)|^2\|h\|^2\|K_{\varphi(0)}\|^2.\end{eqnarray*} It follows that
  \begin{eqnarray*} &&\|W_{f,\varphi} h\|^2\nonumber\\&=&\int_{\mc}\left|f_I(z)\star \la h, K_{\varphi(0)} \ra\right|^2 e^{-|z|^2} d\sigma(x,y)\nonumber\\&\leq&2 \|f\|^2\| K_{\varphi(0)} \|^2\|h\|^2. \end{eqnarray*}  In a summary,  the operator $W_{f,\varphi}:\;\mf\rightarrow \mf$ is bounded for any  $|\lambda|\leq 1$ and $\lambda\in \mc$, which ends the proof.
\zb

We
proceed to give the explicit form of $f$ under the condition $W_{f,\varphi}$ is bounded on $\mf$ with $\varphi(p)= pa+b$, $ |a|=1.$
\begin{prop} \label{prop lambda1}Let $f$ and $\varphi$ be two slice regular functions on $\mathbb{H},$ such that $f$ is not identically zero and $\varphi(\mathbb{C}_I) \subset \mathbb{C}_I$ for some $I\in \mathbb{S}$.  If the weighted composition operator $W_{f,\varphi}$ is bounded on $\mf,$ then \[\varphi(p)= pa+b\] with $|a|\leq 1.$ If $|a|=1,$ then
 \begin{eqnarray*}f_I(z)=F(0) e^{-a\overline{b}z}+G(0) e^{-\overline{a}b z}J=(F(0)+G(0)J)\star e^{-a\overline{b}z}
  \end{eqnarray*}
 with  at least one of $F(0)$ and $G(0)$  is not zero.
  \end{prop}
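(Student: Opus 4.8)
The first assertion is immediate from the boundedness: Theorem \ref{thm bou} already forces $\varphi(p)=p\lambda+\varphi(0)$ with $\lambda\in\mc$ and $|\lambda|\le 1$, so setting $a=\lambda$ and $b=\varphi(0)$ yields $\varphi(p)=pa+b$ with $|a|\le 1$. The substance of the proposition is the explicit shape of $f$ in the boundary case $|a|=1$, and this is where the work lies. The plan is to feed the now-known affine form $\varphi_I(w)=aw+b$ into the two finiteness conditions \eqref{F1} and \eqref{F2} obtained inside the proof of Theorem \ref{thm bou}. Since $\mc$ is commutative and isomorphic to $\mathbb{C}$, I expand $|aw+b|^2=|a|^2|w|^2+2\,\mathrm{Re}(a\overline{b}w)+|b|^2$; the hypothesis $|a|=1$ makes the quadratic term cancel against $-|w|^2$, leaving $|\varphi_I(w)|^2-|w|^2=2\,\mathrm{Re}(a\overline{b}w)+|b|^2$. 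Running the identical computation with $\overline{w}$ in place of $w$ (and using $|\overline{w}|=|w|$) gives $|\varphi_I(\overline{w})|^2-|w|^2=2\,\mathrm{Re}(\overline{a}bw)+|b|^2$.

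The key step is then a Liouville argument. With the above expressions, \eqref{F1} and \eqref{F2} become $|F(w)|\,e^{\mathrm{Re}(a\overline{b}w)}\le C$ and $|G(w)|\,e^{\mathrm{Re}(\overline{a}bw)}\le C$ uniformly in $w\in\mc$. Because the scalars $e^{a\overline{b}w}$ and $e^{\overline{a}bw}$ lie in $\mc$ with moduli $e^{\mathrm{Re}(a\overline{b}w)}$ and $e^{\mathrm{Re}(\overline{a}bw)}$ respectively, these two bounds say precisely that the entire functions $w\mapsto F(w)e^{a\overline{b}w}$ and $w\mapsto G(w)e^{\overline{a}bw}$ on $\mc\cong\mathbb{C}$ are bounded. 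By Liouville's theorem each is constant, equal to its value at $w=0$, whence $F(w)e^{a\overline{b}w}\equiv F(0)$ and $G(w)e^{\overline{a}bw}\equiv G(0)$. This delivers $F(z)=F(0)e^{-a\overline{b}z}$ and $G(z)=G(0)e^{-\overline{a}bz}$, i.e. the stated componentwise formula for $f_I(z)=F(z)+G(z)J$.

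It remains to recast this as a $\star$-product and to record non-triviality. Writing the scalar factor in its trivial splitting $e^{-a\overline{b}z}+0\cdot J$ and applying the multiplication rule \eqref{star} to $(F(0)+G(0)J)\star e^{-a\overline{b}z}$, the cross terms produce $F(0)e^{-a\overline{b}z}$ in the first slot and $G(0)\,\overline{e^{-a\overline{b}\overline{z}}}\,J=G(0)e^{-\overline{a}bz}J$ in the second, using $\overline{-a\overline{b}\,\overline{z}}=-\overline{a}bz$; this reproduces the componentwise expression and confirms $f_I(z)=(F(0)+G(0)J)\star e^{-a\overline{b}z}$. Finally, were both $F(0)$ and $G(0)$ zero then $F\equiv G\equiv 0$ and hence $f\equiv 0$, contradicting the hypothesis that $f$ is not identically zero, so at least one of $F(0),G(0)$ is nonzero. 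The only point that demands care is the bookkeeping of conjugates in the $G$-term, where $\varphi$ is evaluated at $\overline{w}$ rather than $w$, but the commutativity of $\mc$ keeps this routine.
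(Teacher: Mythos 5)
Your proposal is correct and follows essentially the same route as the paper: feed the affine form of $\varphi$ with $|a|=1$ into the finiteness conditions \eqref{F1}--\eqref{F2}, observe that $F(w)e^{a\overline{b}w}$ and $G(w)e^{\overline{a}bw}$ are bounded entire functions on $\mc\cong\mathbb{C}$, and conclude by Liouville. The only difference is that you make explicit the Liouville step and the $\star$-product verification, both of which the paper leaves implicit.
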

  \pf Suppose the operator $W_{f,\varphi}$ is bounded on $\mathcal{F}^2(\mathbb{H}),$  Theorem \ref{thm bou} ensures the display \eqref{Mf1} holds. Furthermore, the inequalities \eqref{F1} and \eqref{F2} are valid.

   Under the case $\varphi_I(w)=wa+b$ with $|a|=1,$ denoting $\beta=\overline{a}b,$ it yields that
  \begin{eqnarray*}&&|\varphi_I(w)|^2-|w|^2=\overline{\beta}w+\beta \overline{w}+|b|^2\\&&|\varphi_I(\overline{w})|^2-|w|^2=\overline{\beta w}+\beta w+|b|^2. \end{eqnarray*}  Furthermore, \eqref{F1} and \eqref{F2} entail that \[|F(w)e^{\overline{\beta}w}|^2\leq \hat{M}(f,\varphi)e^{-|b|^2}\;\;\mbox{and}\;\; |G(w)e^{ \beta w }|^2\leq \hat{M}(f,\varphi)e^{-|b|^2},\] $\mbox{for all } \;w\in \mc.$ Hence we  deduce that
  \begin{eqnarray} F(w)=F(0)e^{-\overline{\beta}w}\;\;\mbox{and}\;\;G(w)=G(0)e^{-\beta w}.  \label{FG} \end{eqnarray}It follows that
   \[ f_I(z)=F(0) e^{-a\overline{b}z}+G(0) e^{-\overline{a}b z}J.\] Since  $f$ is not identically zero, hence at least one of $F(0)$ and $G(0)$  is not zero, which completes the proof.
\zb
Theorem \ref{thm bou} and Proposition \ref{prop lambda1} can validate the following Proposition, which gave an interesting example for boundedness of $W_{f,\varphi}$ when $f, \varphi$ are specific slice functions.
\begin{prop} \label{prop spe}Let $\varphi(p)=pA+B$ and $f_I(z)=C_1e^{D_1 z}+C_2e^{D_2 z}J,$ where $A, B, C_i$ and $D_i$ are complex constants for $i=1, 2.$ Then the weighted composition operator $W_{f,\varphi}$ is bounded on $\mf$ if and only if

$(a)$ either $|A|<1$

$(b)$ or $|A|=1,$ $D_1+A\overline{B}=0$ and $D_2+\overline{A}B=0.$\end{prop}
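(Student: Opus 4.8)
The plan is to treat this as a direct specialization of Theorem \ref{thm bou}, with all the function-theoretic data now made explicit. Writing $\varphi(p)=pA+B$ we read off $\lambda=A$ and $\varphi(0)=B$, while the splitting $f_I(z)=F(z)+G(z)J$ gives $F(w)=C_1e^{D_1w}$ and $G(w)=C_2e^{D_2w}$. Since each $e^{Dw}$ is holomorphic on $\mc$ with $|e^{Dw}|^2=e^{2\,\mathrm{Re}(Dw)}$ Gaussian-integrable against $e^{-|w|^2}$, the function $f$ automatically lies in $\mf$, and $\varphi$ already has the affine form demanded by Theorem \ref{thm bou}. Hence boundedness of $W_{f,\varphi}$ is equivalent to the two conditions $|A|\le 1$ and $\hat M(f,\varphi)<\infty$, and the whole argument reduces to extracting from $\hat M(f,\varphi)<\infty$ the dichotomy in $(a)$--$(b)$.

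The key computation is to expand the two exponents appearing in \eqref{Mf1}. Working inside the commutative plane $\mc$ one finds
\[
|\varphi(w)|^2-|w|^2=(|A|^2-1)|w|^2+2\,\mathrm{Re}(wA\overline{B})+|B|^2,
\]
and the analogous identity for $|\varphi(\overline{w})|^2-|w|^2$ with $wA\overline{B}$ replaced by $w\overline{A}B$, using that $\mathrm{Re}(\overline{w}A\overline{B})=\mathrm{Re}(w\overline{A}B)$ (conjugation invariance of $\mathrm{Re}$). Substituting $F,G$ and collecting the linear terms, the two summands of $\hat M(f,\varphi)$ take the normal form
\[
|C_1|^2e^{|B|^2}\,e^{(|A|^2-1)|w|^2}\,e^{2\,\mathrm{Re}(w(D_1+A\overline{B}))},\qquad
|C_2|^2e^{|B|^2}\,e^{(|A|^2-1)|w|^2}\,e^{2\,\mathrm{Re}(w(D_2+\overline{A}B))}.
\]
With this in hand the case analysis is immediate. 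If $|A|<1$ the Gaussian factor $e^{(|A|^2-1)|w|^2}$ drives each summand to $0$ as $|w|\to\infty$ and keeps the supremum finite irrespective of $D_1,D_2$, so $W_{f,\varphi}$ is bounded, giving $(a)$. If $|A|=1$ the Gaussian factor degenerates to $1$, leaving a pure exponential $e^{2\,\mathrm{Re}(wc)}$ of a linear form, whose supremum over $w\in\mc$ is finite precisely when $c=0$; thus $\hat M(f,\varphi)<\infty$ forces $D_1+A\overline{B}=0$ and $D_2+\overline{A}B=0$, and conversely these identities collapse $\hat M(f,\varphi)$ to $(|C_1|^2+|C_2|^2)e^{|B|^2}<\infty$, giving $(b)$. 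The case $|A|>1$ is excluded by the necessity of $|\lambda|\le 1$. These conditions are exactly the explicit shape of $f$ obtained in Proposition \ref{prop lambda1}, where $D_1=-A\overline{B}$ and $D_2=-\overline{A}B$.

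The only genuinely delicate point is the modulus bookkeeping in $\mc$ that yields the normal form above — one must track which factor carries $w$ versus $\overline{w}$ and repeatedly invoke conjugation invariance of $\mathrm{Re}$ — together with the boundary case $|A|=1$, where the elementary fact $\sup_{w\in\mc}e^{2\,\mathrm{Re}(wc)}<\infty\iff c=0$ should be stated cleanly. I would also flag the degenerate sub-cases $C_1=0$ or $C_2=0$: when a coefficient vanishes the corresponding condition on $D_i$ becomes vacuous, so the sharp `if and only if' in $(b)$ is read under the standing hypothesis (inherited from Proposition \ref{prop lambda1} and the assumption $f\not\equiv 0$) that $C_1$ and $C_2$ are nonzero.
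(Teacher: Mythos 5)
Your proposal is correct and follows exactly the route the paper intends: the paper offers no separate proof of Proposition \ref{prop spe}, remarking only that it is validated by Theorem \ref{thm bou} and Proposition \ref{prop lambda1}, and your explicit computation of $\hat M(f,\varphi)$ with the dichotomy on the Gaussian factor $e^{(|A|^2-1)|w|^2}$ is precisely the detailed version of that argument. Your caveat about the degenerate sub-cases $C_1=0$ or $C_2=0$ (where the corresponding condition on $D_i$ ceases to be necessary) is a legitimate observation about the sharpness of the stated ``if and only if'' that the paper itself glosses over.
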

In the sequel, we describe the equivalent conditions for compactness of  $W_{f,\varphi}$ on $\mf.$
\begin{thm} \label{thm cpt}Let $f$ and $\varphi$ be two slice regular functions on $\mathbb{H},$ such that $f$ is not identically zero and $\varphi(\mathbb{C}_I) \subset \mathbb{C}_I$ for some $I\in \mathbb{S}$.  Then $W_{f,\varphi}$ is compact on $\mf$ if and only if  $\varphi(p)=\varphi(0)+p\lambda$ with $|\lambda|<1$, $\lambda\in \mc$ and
 \begin{eqnarray} &&\lim\limits_{|w|\rightarrow \infty}\left(|F(w)|^2 e^{|\varphi(w)|^2-|w|^2}+ |G(\overline{w})|^2e^{|\varphi(w)|^2-|w|^2}\right)\nonumber\\&=&  \lim\limits_{|w|\rightarrow \infty}\left| f_{I}(w)\star e^{\frac{|\varphi(w)|^2-|w|^2}{2}}\right|^{2}=0.\label{Mf2}\end{eqnarray}
  \end{thm}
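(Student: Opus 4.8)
The plan is to mirror the structure of Theorem \ref{thm bou}, reusing the kernel computation \eqref{norm*} for necessity and the pointwise estimate \eqref{II1} for sufficiency, while bringing in the standard Hilbert-space criterion that a bounded operator is compact if and only if it carries weakly null sequences to norm null sequences.

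For the \emph{necessity} part, I would first note that compactness forces boundedness, so Theorem \ref{thm bou} already yields $f\in\mf$ and $\varphi(p)=\varphi(0)+p\lambda$ with $\lambda\in\mc$, $|\lambda|\le 1$. The key device is the family of normalized reproducing kernels $k_p=K_p/\|K_p\|$: using $\langle g,K_p\rangle=g(p)$, \eqref{ker-norm}, and a polynomial approximation based on the coefficient condition $\sum|a_m|^2 m!<\infty$, one checks that $\langle g,k_p\rangle=g(p)e^{-|p|^2/2}\to0$ as $|p|\to\infty$ for every $g\in\mf$, i.e. $k_p\to0$ weakly. Since $W_{f,\varphi}^*$ is again compact, $\|W_{f,\varphi}^*k_p\|\to0$; combining this with the identity \eqref{norm*} divided by $\|K_p\|^2$ (via \eqref{ker-norm}) gives exactly that $|F(w)|^2 e^{|\varphi(w)|^2-|w|^2}+|G(w)|^2 e^{|\varphi(\overline w)|^2-|w|^2}\to0$ as $|w|\to\infty$, which after the harmless substitution $w\mapsto\overline w$ (note $|\overline w|=|w|$) is precisely \eqref{Mf2}. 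It then remains to upgrade $|\lambda|\le1$ to $|\lambda|<1$: if $|\lambda|=1$, Proposition \ref{prop lambda1} pins down $f_I$, and a short computation shows that each summand in \eqref{Mf2} reduces to the \emph{constant} $|F(0)|^2 e^{|b|^2}$, resp. $|G(0)|^2 e^{|b|^2}$, contradicting the limit being $0$ since $f\not\equiv0$.

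For the \emph{sufficiency} part, I would first observe that the limit \eqref{Mf2} makes $\hat M(f,\varphi)$ finite and, because $|\lambda|<1$ forces $|\varphi(w)|^2-|w|^2\to-\infty$, it also yields $f\in\mf$; hence Theorem \ref{thm bou} guarantees that $W_{f,\varphi}$ is bounded. To prove compactness I would take a bounded sequence $\{h_n\}$ with $h_n\to0$ weakly, equivalently (by the reproducing property and a normal-families argument) uniformly on compact subsets, and show $\|W_{f,\varphi}h_n\|\to0$. Starting from \eqref{II1} and \eqref{computation}, I split each of the two integrals over $\{|z|\le R\}$ and $\{|z|>R\}$. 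On the tail I factor out the weights $|F(z)|^2 e^{|\varphi(z)|^2-|z|^2}$ and $|G(z)|^2 e^{|\varphi(\overline z)|^2-|z|^2}$, which by \eqref{Mf2} are below any prescribed $\epsilon$ once $R$ is large, and absorb the remaining factor through the change of variables $w=\varphi(z)$ exactly as in \eqref{Suff1}, bounding the tail by a fixed multiple of $\epsilon\sup_n\|h_n\|^2$; on the compact part $\{|z|\le R\}$ the factor $|h_n(\varphi(z))|^2$ tends to $0$ uniformly while $\int_{|z|\le R}|F(z)|^2 e^{-|z|^2}d\sigma\le\|f\|^2$ stays bounded, so that piece tends to $0$ as $n\to\infty$. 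Letting $n\to\infty$ and then $\epsilon\to0$ yields $\|W_{f,\varphi}h_n\|\to0$. The degenerate case $\lambda=0$ is handled separately: then $\varphi\equiv\varphi(0)$ and $W_{f,\varphi}h=f\star\langle h,K_{\varphi(0)}\rangle$ is of finite rank, hence trivially compact.

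The routine parts are the modulus bookkeeping for the $\star$-product, already carried out in the proof of Theorem \ref{thm bou}. The main obstacle I anticipate is twofold: first, verifying carefully that the normalized kernels are weakly null and that the weak-to-norm compactness criterion is indeed available in the quaternionic (right-linear) Hilbert space $\mf$; and second, the tail/compact splitting estimate in the sufficiency direction, where one must keep the roles of $F,G$ and of $z,\overline z$ correctly aligned with the two summands of \eqref{Mf2} so that the single change of variables $w=\varphi(z)$ applies to each integral. Establishing the strict bound $|\lambda|<1$ via the constant-weight contradiction is the one genuinely new ingredient beyond the boundedness theorem.
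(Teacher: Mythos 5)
Your proposal is correct and follows essentially the same route as the paper: necessity via weak nullity of the normalized kernels $k_p$, compactness of $W_{f,\varphi}^*$, the identity \eqref{norm*}, and the exclusion of $|\lambda|=1$ through Proposition \ref{prop lambda1} (where the weight expression collapses to the nonzero constant $|f_I(0)|^2e^{|\varphi(0)|^2}$); sufficiency via the weak-to-norm criterion, the estimate \eqref{II1}--\eqref{computation}, a tail/compact splitting governed by \eqref{Mf2}, and the change of variables $w=\varphi_I(z)$, with the finite-rank case $\lambda=0$ treated separately. The only cosmetic differences are the order in which $|\lambda|=1$ is ruled out and that the paper performs the splitting in the $w=\varphi_I(z)$ variable through the auxiliary functions $\hat F,\hat G$ rather than in $z$ directly.
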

\pf \emph{Necessity}. Suppose the operator $W_{f,\varphi}$ is compact on $\mf,$ it must be bounded and \[\varphi(p)=\varphi(0)+p\lambda\] with $|\lambda|\leq 1,$ $\lambda\in \mc.$ For the case $|\lambda|=1,$ Proposition \ref{prop lambda1} implies \eqref{FG} is true   for $\beta=\overline{\lambda}\varphi(0).$  Then
\begin{eqnarray}&&|F(w)|^2 e^{|\varphi(w)|^2-|w|^2}+|G(\overline{w})|^2 e^{|\varphi(w)|^2-|w|^2}\nonumber\\&
=&(|F(0)|^2+|G(0)|^2)e^{|\varphi(0)|^2}\nonumber\\&=&|f_I(0)|^2e^{|\varphi(0)|^2},\end{eqnarray}
which means \eqref{Mf2} does not converge to zero.

Thus  we suppose $|\lambda|<1$ and go on to show the display \eqref{Mf2}. If  $W_{f,\varphi}$ is compact on $\mf,$  we know that the adjoint operator  $W_{f,\varphi}^*$ is also compact on $\mf.$
Since  \[k_p=\|K_p\|^{-1}K_p\rightarrow 0\] as $|p|\rightarrow \infty,$ we have \[\|K_p\|^{-2}\|W_{f,\varphi}^* K_p\|^2=e^{-|p|^2} \|W_{f,\varphi}^* K_p\|^2\rightarrow 0\] as $|p|\rightarrow \infty.$ Employing the computation in \eqref{Wf-norm} and \eqref{norm*}, it yields that
\begin{align*} |F(w)|^2 e^{|\varphi_I(w)|^2-|w|^2}+|G(w)|^2 e^{|\varphi_I(\overline{w})|^2-|w|^2}\rightarrow 0,\end{align*} as $|w|\rightarrow \infty.$ That means \eqref{Mf2} holds.

\emph{Sufficiency}.  Assume that  \[\varphi(p)=\varphi(0)+p\lambda\] with $|\lambda|<1$, $\lambda\in \mc$ and \eqref{Mf2} holds. For the case $\lambda=0,$ this implication is obvious. In fact, \[W_{f,\varphi} h=f\star h(\varphi(0)),\] which  illustrates   $W_{f,\varphi} $ has finite rank, thus it is compact.

 Now suppose that $\lambda\neq 0,$ we proceed to prove the weighted composition operator is compact on $\mf.$ Let $\{h_m\}_{m=1}^\infty$ be a bounded sequence in $\mf$ with $C:=\sup\limits_{m\in \mathbb{N}}\|h_m\|<+\infty$ and converge weakly to $0$ as $m\rightarrow \infty.$ Then we know the sequence $\{h_m\}$ converges to zero uniformly on compact subsets of $\hh.$ In the sequel, we show that $$\|W_{f,\varphi} h_m\|^2 \rightarrow 0$$ as $m\rightarrow \infty.$ By the similar calculation in \eqref{computation}, we have
\begin{eqnarray}&&\|W_{f,\varphi}h_m\|^2\\&=&\int_{\mc }|f_I(z)\star (h_m\circ\varphi)_I(z)|^2e^{-|z|^2}d\sigma(x,y)\nonumber\\&\leq&  2\int_{\mc} |F(z)|^2 |(h_m\circ \varphi)_I(z)|^2 e^{-|z|^2} d\sigma(x,y)\nonumber \\&&+  2\int_{\mc} |G(\overline{z})|^2 |(h_m\circ \varphi)_I(z)|^2 e^{-|z|^2} d\sigma(x,y).  \label{I1+I2} \end{eqnarray}
Denoting \begin{align} &&I_1:= \int_{\mc} |F(z)|^2 |(h_m\circ \varphi)_I(z)|^2 e^{-|z|^2} d\sigma(x,y);\label{I1}\\&&I_2:=\int_{\mc} |G(\overline{z})|^2 |(h_m\circ \varphi)_I(z)|^2 e^{-|z|^2} d\sigma(x,y).\label{I2}\end{align}
we only need to show $I_1\rightarrow 0$ and $I_2\rightarrow 0$ as $m\rightarrow \infty.$

For $w\in \mc,$ define the function
$$\hat{F}(w)=|\lambda|^{-2}|F(\varphi_I^{-1}(w))|^2 e^{|w|^2-|\varphi_I^{-1}(w)|^2},$$ and then \[\hat{F}(\varphi_I(w))=|\lambda|^{-2} |F(w)|^2 e^{|\varphi_I^{-1}(w)|^2-|w|^2}.\] Since $\lim\limits_{|w|\rightarrow \infty} |\varphi_I^{-1}(w)|=\infty,$ the first part of \eqref{Mf2} yields $\lim\limits_{|w|\rightarrow \infty}\hat{F}(w)=0,$ hence $\hat{F}$ is a bounded function on $\mc$ with $\|\hat{F}\|_\infty<\infty.$ For any $\epsilon>0,$ there exists a $R>0$ such that  \begin{eqnarray}\sup\limits_{|w|>R}\hat{F}(w)<\epsilon. \label{epsilon1} \end{eqnarray}
Alternatively, it's easy to check that  $$|F(w)|^2=|\lambda|^2 \hat{F}(\varphi_I(w)) e^{|w|^2-|\varphi_I(w)|^2}.$$ Putting it into \eqref{I1} and employing \eqref{epsilon1},  we deduce that
\begin{align*} I_1&=  \int_{\mc} |\lambda|^2 \hat{F}(\varphi_I(z)) e^{|z|^2-|\varphi_I(z)|^2} |(h_m)_I\circ \varphi_I(z)|^2 e^{-|z|^2} d\sigma(x,y)\nonumber\\&= \int_{\mc} \hat{F}(w)e^{-|w|^2}|(h_m)_I(w)|^2d\sigma(u,v)\nonumber\\&= \int_{|w|\leq R}  \hat{F}(w)e^{-|w|^2}|(h_m)_I(z)|^2d\sigma(u,v)\nonumber\\&\;\;+\int_{|w|>R}  \hat{F}(w)e^{-|w|^2}|(h_m)_I(w)|^2d\sigma(u,v)\nonumber\\&\leq
\|\hat{F}\|_\infty \int_{|w|\leq R}  |(h_m)_I(w)|^2d\sigma(u,v)\nonumber\\&\;\;+\sup\limits_{|w|>R}\hat{F}(w)\int_{|w|>R}  e^{-|w|^2}|(h_m)_I(w)|^2d\sigma(u,v)\nonumber\\&\leq C^2 \epsilon  ,\;\mbox{as} \; m\rightarrow \infty. \end{align*} Due to $\epsilon$ is arbitrary,  then $I_1\rightarrow 0$ as $m\rightarrow \infty.$  We  note that  in the second line, the variable substitution $w=\varphi_I(z)$ was used.

Analogously,  define $$\hat{G}(w)=|\lambda|^{-2}|G(\overline{\varphi_I^{-1}(w)})|^2 e^{|w|^2-|\varphi_I^{-1}(w)|^2},$$ and it also holds $\lim\limits_{|w|\rightarrow \infty}\hat{G}(w)=0.$ Moreover,
$$|G(\overline{w})|^2=|\lambda|^2\hat{G}(\varphi_I(w))e^{|w|^2-|\varphi_I(w)|^2}. $$ Now
\begin{eqnarray*}I_2:&=&\int_{\mc} |\lambda|^2\hat{G}(\varphi_I(z))e^{|z|^2-|\varphi_I(z)|^2}| (h_m\circ \varphi)_I(z)|^2 e^{-|z|^2} d\sigma(x,y)\nonumber\\&=&\int_{\mc} |\lambda|^2\hat{G}(\varphi_I(z))e^{-|\varphi_I(z)|^2}| (h_m\circ \varphi)_I(z)|^2 d\sigma(x,y)\nonumber\\&=& \int_{\mc}\hat{G}(w)e^{-|w|^2}| (h_m )_I(w)|^2 d\sigma(u,v)\nonumber\\&\rightarrow&0,\;\;\mbox{as}\;m\rightarrow \infty.\end{eqnarray*}
Taking the above calculations into \eqref{I1+I2}, we conclude that $\|W_{f,\varphi} h_m\|\rightarrow 0$ as $m\rightarrow \infty.$ That is to say $W_{f,\varphi}$ is compact on $\mf,$ which completes the proof.
\zb

\section{ Isometric weighted composition operators}

Recall that an operator $T$ on $\mf$ is called isometric if $||Th||=||h||$ for all $h\in \mf$, or equivalently, $\la Th, Tg\ra =\la h, g\ra$ for all $h, g\in \mf$. In this section, we describe all isometric weighted composition operators on the $\mf$ in the slice setting. In what follows, we always denote  $ d\mu(z):= e^{-|z|^2}d\sigma(x,y).$
 \begin{lem} \cite[Lemma 4.1]{Le}\label{lem0} Let $u$ be a measurable function such that $|u(z)|\leq C e^{|z|^2-\epsilon |z|}$ for some constants $\epsilon>0$ and $C>0.$ If for all integers $m,k\geq 0,$ $$\int_{\mathbb{C}} u(z)z^m \overline{z}^k d\mu(z)=0,$$ then $u=0$ a.e. on $\mathbb{C}$. \end{lem}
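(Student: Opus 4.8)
The plan is to absorb the Gaussian weight into $u$ and reduce the statement to a uniqueness theorem for the Fourier transform. Set $v(z):=u(z)\,e^{-|z|^2}$, regarded as a function of $(x,y)\in\mathbb{R}^2$ through $z=x+iy$. The growth hypothesis gives at once
\[
|v(z)|=|u(z)|\,e^{-|z|^2}\le C\,e^{|z|^2-\epsilon|z|}\,e^{-|z|^2}=C\,e^{-\epsilon|z|},
\]
so that $v$ decays exponentially; in particular $v\in L^1(\mathbb{R}^2)$ and even $v(z)\,e^{a|z|}\in L^1(\mathbb{R}^2)$ for every $a$ with $|a|<\epsilon$. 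Since the real-linear span of $\{z^m\overline{z}^k:m,k\ge 0\}$ coincides with that of $\{x^ay^b:a,b\ge 0\}$, the hypothesis $\int_{\mathbb{C}}u\,z^m\overline{z}^k\,d\mu(z)=0$ is equivalent to $\int_{\mathbb{R}^2}v(x,y)\,x^ay^b\,dx\,dy=0$ for all $a,b\ge 0$; that is, every polynomial moment of $v$ vanishes.

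Next I would introduce the Fourier transform $\widehat{v}(\xi)=\int_{\mathbb{R}^2}v(x,y)\,e^{-i(\xi_1 x+\xi_2 y)}\,dx\,dy$. The two-sided exponential decay of $v$ permits differentiation under the integral sign to any order and, more strongly, allows one to extend $\widehat{v}$ to a function of a complex argument $\zeta=\xi+i\eta\in\mathbb{C}^2$ which is holomorphic on the tube $\{\,\|\mathrm{Im}\,\zeta\|<\epsilon\,\}$; indeed on that tube the integrand is dominated by $C\,e^{(\|\eta\|-\epsilon)|z|}\in L^1$, and holomorphy in each variable follows by Morera's theorem. In particular $\widehat{v}$ is real-analytic on $\mathbb{R}^2$. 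Differentiating under the integral and evaluating at the origin yields
\[
\partial_{\xi_1}^{\,a}\partial_{\xi_2}^{\,b}\,\widehat{v}(0)=(-i)^{a+b}\int_{\mathbb{R}^2}v(x,y)\,x^ay^b\,dx\,dy=0\qquad(a,b\ge 0),
\]
so that every Taylor coefficient of $\widehat{v}$ at the origin is zero.

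It then remains to conclude. Since $\widehat{v}$ is real-analytic on the connected set $\mathbb{R}^2$ and all of its partial derivatives vanish at a single point, the identity theorem for (real-)analytic functions forces $\widehat{v}\equiv 0$ on $\mathbb{R}^2$. By the uniqueness theorem for the Fourier transform on $L^1$, this gives $v=0$ almost everywhere, whence $u=0$ a.e.\ on $\mathbb{C}$, as claimed.

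The step I expect to be the crux is the passage from the stated growth bound to genuine \emph{exponential} decay of $v$: this is exactly what makes $\widehat{v}$ holomorphic on an open neighborhood of $\mathbb{R}^2$ rather than merely smooth. Without decay in all directions one could at best control a one-sided Laplace--Fourier transform on a boundary half-space, and the inference ``all derivatives vanish at one point $\Rightarrow$ identically zero'' would break down. Once the exponential decay is secured, the remaining items---differentiation under the integral sign, the holomorphic extension, and Fourier inversion---are entirely routine.
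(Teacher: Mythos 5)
This lemma is imported verbatim from \cite[Lemma~4.1]{Le} and the paper gives no proof of its own, so there is nothing internal to compare against; judged on its own, your argument is correct and is essentially the standard (and Le's original) route: the growth hypothesis turns $v=ue^{-|z|^2}$ into an exponentially decaying $L^1$ function, the vanishing of all $z^m\overline{z}^k$-moments kills every Taylor coefficient of its Fourier transform at the origin, and the holomorphic extension to a tube plus $L^1$ Fourier uniqueness finishes the job. The only cosmetic slip is the phrase ``real-linear span'': the spans of $\{z^m\overline{z}^k\}$ and $\{x^ay^b\}$ agree as \emph{complex}-linear spans, which is all you use.
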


\begin{prop} \label{prop pla}Let $\eta(p)=p\lambda$ with $|\lambda|\leq 1$ and $\lambda\in \mc$ for some $I\in \ms.$ Denote $\xi\in \mf$ with restriction $\xi_I(z)=F(z)+G(z)J$ on $\mc,$ then  $W_{\xi,\eta}$ is an isometry on $\mf$ if and only if  $|\lambda|=1$,  $F(z)$ and $G(z)$ are constants and \[|\xi_I(z)|^2=|F(z)|^2+|G(z)|^2=1\] which implies $\xi(q)$ is a constant quaternion with modulus one on $\hh$.  \end{prop}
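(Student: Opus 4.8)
The plan is to treat the two implications separately, in both reducing everything to the modulus of a $\star$-product via \eqref{star} and exploiting that $d\mu$ is invariant under $z\mapsto\overline z$. Writing the restriction $h_I=H_0+K_0J$, one has $(h\circ\eta)_I(z)=H_0(z\lambda)+K_0(z\lambda)J$, so \eqref{star} presents $|\xi_I(z)\star(h\circ\eta)_I(z)|^2$ as four ``diagonal'' terms $|F|^2|H_0(z\lambda)|^2,\ldots$ together with two cross terms carrying a factor $\mathrm{Re}\!\left(F\overline{G}[\,\cdots]\right)$. Integrating against $d\mu$ and applying $z\mapsto\overline z$ to the $G$-terms, the diagonal part always collapses to $\int_{\mc}\bigl(|F(z)|^2+|G(\overline z)|^2\bigr)\bigl(|H_0(z\lambda)|^2+|K_0(z\lambda)|^2\bigr)\,d\mu(z)$, and the whole strategy is to control the leftover cross terms in each direction.

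For sufficiency I assume $|\lambda|=1$ and $F\equiv F(0)$, $G\equiv G(0)$ with $|F(0)|^2+|G(0)|^2=1$. Then $F\overline{G}$ is a constant, so the two cross terms are interchanged up to sign by $z\mapsto\overline z$ and cancel after integration; the diagonal part reduces to $(|F(0)|^2+|G(0)|^2)\int_{\mc}|(h\circ\eta)_I(z)|^2\,d\mu(z)$, and the substitution $w=z\lambda$ — a rotation since $|\lambda|=1$, hence $d\mu$-preserving — turns this into $\|h\|^2$. Thus $\|W_{\xi,\eta}h\|=\|h\|$ for every $h$, so $W_{\xi,\eta}$ is isometric.

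For necessity, note first that $\lambda\neq0$: otherwise $\eta\equiv0$ and $W_{\xi,\eta}h=\xi\,h(0)$ has rank one, which cannot be isometric on the infinite-dimensional $\mf$. I would then feed the isometry only the slice-holomorphic test functions, i.e. those $h$ whose restriction $h_I=H_0$ is $\mc$-valued ($K_0\equiv0$); for these the cross terms vanish identically, so $\|W_{\xi,\eta}h\|^2=\|h\|^2$ becomes $\int_{\mc}\bigl(|F(z)|^2+|G(\overline z)|^2\bigr)|H_0(z\lambda)|^2\,d\mu=\int_{\mc}|H_0(w)|^2\,d\mu$ for all holomorphic $H_0$. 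A change of variable $w=z\lambda$, the arbitrariness of $H_0$ (polarized to monomials, giving $\int u(z)z^m\overline z^k\,d\mu(z)=0$ for all $m,k$), and Lemma \ref{lem0} applied to $u(z)=|F(z)|^2+|G(\overline z)|^2-|\lambda|^2e^{(1-|\lambda|^2)|z|^2}$ — whose growth is controlled by the boundedness estimates \eqref{F1}–\eqref{F2} — yield the pointwise identity
\begin{align*}|F(z)|^2+|G(\overline z)|^2=|\lambda|^2e^{(1-|\lambda|^2)|z|^2},\qquad z\in\mc.\end{align*}

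The decisive step is to extract rigidity from this identity, and I expect it to be the main obstacle. Expanding $F(z)=\sum f_kz^k$, $G(z)=\sum g_kz^k$ and matching the coefficient of $z^p\overline z^q$ gives, with $c:=1-|\lambda|^2$,
\[f_p\overline{f_q}+\overline{g_p}g_q=|\lambda|^2\frac{c^{\,p}}{p!}\,\delta_{pq}.\]
In other words the diagonal matrix $\mathrm{diag}\bigl(|\lambda|^2c^{\,p}/p!\bigr)$ equals $ff^{*}+\overline{g}\,\overline{g}^{*}$, a sum of two rank-one matrices and hence of rank at most $2$; passing to $(n+1)\times(n+1)$ sections, a diagonal matrix of rank $\le2$ can have at most two nonzero diagonal entries. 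This rank count is what forces the result: if $c>0$ every diagonal entry is strictly positive, which is impossible, so $c=0$, that is $|\lambda|=1$. The diagonal then collapses to $(1,0,0,\dots)$, whence $f_p=g_p=0$ for all $p\ge1$ and $|f_0|^2+|g_0|^2=1$; thus $F$ and $G$ are constants with $|F|^2+|G|^2=1$, and $\xi$ is the constant quaternion $F(0)+G(0)J$ of modulus one, completing the proof.
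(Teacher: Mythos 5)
Your proposal is correct, and the sufficiency half plus the derivation of the integral identity in the necessity half run essentially parallel to the paper (test against monomials, polarize the isometry to an inner-product identity, change variables $w=z\lambda$, invoke Lemma \ref{lem0} with the growth bound coming from boundedness). Where you genuinely diverge is the final rigidity step. The paper keeps \emph{both} components of the polarized identity: the scalar part gives $|F(z)|^2+|G(\overline z)|^2=|\lambda|^2e^{(1-|\lambda|^2)|z|^2}$ (your identity), and the $J$-part gives $\overline{F(z)}G(z)=G(\overline z)\overline{F(\overline z)}$, from which it extracts $G(\overline z)=\delta\,\overline{F(z)}$ (implicitly dividing by $\overline{F(z)}$, so needing an argument at zeros of $F$), and then concludes via the two-variable extension $(1+|\delta|^2)F(z)\overline{F(w)}=|\lambda|^2e^{(1-|\lambda|^2)z\overline w}$ evaluated at $w=0$. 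You instead discard the $J$-part entirely and extract everything from the scalar identity alone by matching Taylor coefficients: $f_p\overline{f_q}+\overline{g_p}g_q=|\lambda|^2c^{\,p}\delta_{pq}/p!$ exhibits a diagonal matrix as $ff^*+\overline g\,\overline g^{\,*}$, of rank at most $2$, which kills $c>0$ and then all coefficients of index $\ge1$. This is a cleaner and more self-contained route — it needs less of the isometry hypothesis, avoids the division by $\overline{F(z)}$, and gets $|\lambda|=1$ and the constancy of $F,G$ in one stroke — at the modest cost of justifying the (standard) linear independence of the monomials $z^p\overline z^q$, e.g.\ by applying $\partial_z^p\partial_{\overline z}^q$ at the origin. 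Your cross-term cancellation under $z\mapsto\overline z$ in the sufficiency direction is also a valid, slightly more structural substitute for the paper's power-series computation.
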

\pf \emph{Sufficiency}. 
We express $h(p)$ as $h(p)=\sum_{k=0}^\infty p^ka_k.$  Equation \eqref{h-norm}  together with \cite[Proposition 3.11]{ACSS} yield
\begin{eqnarray*} &&\|W_{\xi,\eta} h\|^2\\&=&\int_{\mc}|[\xi\star (h\circ \eta)]_I(z)|^2 e^{-|z|^2} d\sigma(x,y)\nonumber\\&=&\int_{\mc}|\xi_I(z)\star h_I (\lambda z)|^2 e^{-|z|^2} d\sigma(x,y)\nonumber\\&=& \int_{\mc}\left|(F +G J)\star \left(\sum_{k=0}^\infty z^k \lambda^k a_k\right) \right|^2  e^{-|z|^2} d\sigma(x,y)\nonumber\\&=&\int_{\mc}\left| F   \left(\sum_{k=0}^\infty z^k \lambda^k a_k\right)+ G \left(\sum_{k=0}^\infty z^k \overline{\lambda^k a_k}\right)J \right|^2  e^{-|z|^2} d\sigma(x,y)\nonumber\\&=&\int_{\mc}\left[ | F |^2 \left|\sum_{k=0}^\infty z^k \lambda^k a_k\right|^2 + |G|^2 \left|\sum_{k=0}^\infty z^k \overline{\lambda^k a_k}\right|^2  \right]e^{-|z|^2} d\sigma(x,y)\nonumber\\&=& |F|^2 \sum_{k=0}^\infty|a_k|^2k!+|G|^2 \sum_{k=0}^\infty|a_k|^2k!\nonumber\\&=& (|F|^2+|G|^2)\|h\|^2=\|h\|^2,\end{eqnarray*} where the fourth line is due to \eqref{starH}.  Hence  the operator $W_{\xi,\eta}$ is an isometry on $\mf$.

\emph{Necessity}.
Suppose $W_{\xi,\eta}$ is an isometry on $\mf$. We have $\lambda\neq 0.$ Indeed, if $\lambda=0,$ then $\eta_I(z)=0,$
 and further $W_{\xi,\eta} p^N=0 $ for all monomials $p^N$  with $N\geq 1,$ which implies  $W_{\xi,\eta}$ is not an isometry on $\mf$.

 In the sequel, we suppose  $\lambda \neq 0$. Picking two slice regular functions  $g_I(z)=z^k$ and $h_I(z)=z^m\in \mf$ with integers $k,m\geq 0,$  it turns out that \begin{eqnarray}&& \la W_{\xi,\eta}h, W_{\xi,\eta} g\ra \nonumber\\&=&\int_{\mc} \overline{(W_{\xi,\eta} g)_I(z)} (W_{\xi,\eta} h)_I(z) e^{-|z|^2} d\sigma(x,y)\nonumber\\&=&   \int_{\mc} \overline{\xi_I(z)\star g_I(\lambda z)} \xi_I(z)\star h_I(\lambda z)  d\mu(z) \nonumber\\&=&  \int_{\mc} \overline{\xi_I(z)\star  (\lambda z)^k} \xi_I(z)\star (\lambda z)^m    d\mu(z).\label{Wgh}\end{eqnarray}
 Since  $\xi_I(z)\star  (\lambda z)^k=F(z)\lambda^k z^k+G(z)\overline{\lambda}^k z^k J$ and $J F(z)=\overline{F(z)}J,$   the display \eqref{Wgh} turns into
 \begin{eqnarray}  &&   \int_{\mc} \overline{\xi_I(z)\star  (\lambda z)^k} \left(\xi_I(z)\star (\lambda z)^m \right) d\mu(z)\nonumber\\&=&  \int_{\mc} \left[(\overline{F(z)}\overline{\lambda}^k-J\overline{G(z)} \lambda^k)\overline{z}^k\right]\cdot\left[ F(z)\lambda^m z^m +G(z)\overline{\lambda}^m z^mJ\right]    d\mu(z)\nonumber\\&=&  \int_{\mc} (|F(z)|^2\overline{\lambda}^k\lambda^m \overline{z}^k z^m-J|G(z)|^2\lambda^k \overline{\lambda}^m\overline{z}^k z^m J) d\mu(z)\nonumber\\&&+  \int_{\mc} (\overline{F(z)} G(z)\overline{\lambda}^k\overline{\lambda}^m \overline{z}^k z^mJ-J\overline{G(z)}F(z) \lambda^k \lambda^m\overline{z}^k z^m ) d\mu(z)\nonumber\\ &=&  \int_{\mc} (|F(z)|^2\overline{\lambda}^k\lambda^m \overline{z}^k z^m-J^2|G(z)|^2\overline{\lambda}^k \lambda^mz^k \overline{z}^m )d\mu(z)\nonumber\\&&+ \int_{\mc} \left( \overline{F(z)} G(z)\overline{\lambda}^k\overline{\lambda}^m \overline{z}^k z^mJ - G(z)\overline{F(z)} \overline{\lambda}^k \overline{\lambda}^mz^k \overline{z}^m J\right)  d\mu(z) \nonumber\\&=& \int_{\mc} |F(z)|^2\overline{\lambda}^k\lambda^m \overline{z}^k z^m d\mu(z)  + \int_{\mc}|G(\overline{z})|^2 \overline{\lambda}^k\lambda^m \overline{z}^k z^m  d\mu(z) \nonumber\\&&+ \int_{\mc} \overline{F(z)} G(z)\overline{\lambda}^k\overline{\lambda}^m \overline{z}^k z^mJd\mu(z) - \int_{\mc} G(\overline{z})\overline{F(\overline{z})} \overline{\lambda}^k \overline{\lambda}^m\overline{z}^k z^m J  d\mu(z)\nonumber\\&=& \overline{\lambda}^k\lambda^m \int_{\mc}\left( |F(z)|^2 +|G(\overline{z})|^2 \right) \overline{z}^k z^m d\mu(z) \nonumber\\&&+ \overline{\lambda}^k \overline{\lambda}^m\int_{\mc} \left(\overline{F(z)} G(z)  -  G(\overline{z})\overline{F(\overline{z})}\right)\overline{z}^k z^m J  d\mu(z). \label{Wgh1} \end{eqnarray} On the other side,  we use  the change of variable $w=\lambda z$ on  the second line of following equations and  then obtain that
 \begin{eqnarray}  \la h ,g\ra&=&\int_{\mc} \overline{g_I(w)}h_I(w) e^{-|w|^2} d\sigma(u,v)\nonumber\\&=&|\lambda|^2 \int_{\mc} \overline{g_I(\lambda z)}h_I(\lambda z) e^{-|\lambda z|^2} d\sigma(x,y)\nonumber\\&=&
 |\lambda|^2 \int_{\mc} e^{-|\lambda|^2 |z|^2}\overline{\lambda}^k \overline{z}^k \lambda^m z^m d\sigma(x,y)\nonumber\\&=&  |\lambda|^2\overline{\lambda}^k  \lambda^m \int_{\mc} e^{(1-|\lambda|^2) |z|^2} \overline{z}^k z^m e^{-|z|^2}d\sigma(x,y)\nonumber\\&=&  |\lambda|^2\overline{\lambda}^k  \lambda^m \int_{\mc} e^{(1-|\lambda|^2) |z|^2} \overline{z}^k z^m d\mu(z).\label{hg}\end{eqnarray}

As $W_{\xi,\eta}$  is an  isometry on $\mf$ which ensures  $\la W_{\xi,\eta} h, W_{\xi,\eta} g\ra= \la h,g\ra$, hence employing \eqref{Wgh}, \eqref{Wgh1} and  \eqref{hg}, we deduce that
\begin{eqnarray*} && |\lambda|^2\overline{\lambda}^k  \lambda^m \int_{\mc} e^{(1-|\lambda|^2) |z|^2} \overline{z}^k z^m d\mu(z)\nonumber\\&=& \overline{\lambda}^k\lambda^m \int_{\mc}\left( |F(z)|^2 +|G(\overline{z})|^2 \right)   \overline{z}^k z^m  d\mu(z) \nonumber\\&&+ \overline{\lambda}^k \overline{\lambda}^m \int_{\mc} \left(\overline{F(z)} G(z)  -  G(\overline{z})\overline{F(\overline{z})}\right) \overline{z}^k z^m J    d\mu(z).\end{eqnarray*}
The above equations ensure that
\begin{eqnarray}&&\int_{\mc}   \left( |F(z)|^2 +|G(\overline{z})|^2-|\lambda|^2 e^{(1-|\lambda|^2)|z|^2} \right)  \overline{z}^k z^m d\mu(z)=0, \label{equal1}\\&&\int_{\mc} \left(\overline{F(z)} G(z)  -  G(\overline{z})\overline{F(\overline{z})}\right)  \overline{z}^k z^m J   d\mu(z)=0.\label{equal2}
 \end{eqnarray} By Theorem \ref{thm bou}, it holds that \begin{eqnarray*}|F(z)|^2+|G(\overline{z})|^2&\leq& C e^{|z|^2-|\lambda z|^2}\\&=&Ce^{(1-|\lambda|^2)|z|^2}\end{eqnarray*} for some constant $C>0.$
Now formula \eqref{equal1}  and Lemma \ref{lem0} imply that
\begin{eqnarray}|F(z)|^2+|G(\overline{z})|^2&=&|\lambda|^2 e^{(1-|\lambda|^2)|z|^2}\;\;\;\mbox{for all}\;z\in \mc,\label{con1}\\ \overline{F(z)} G(z)  -  G(\overline{z})\overline{F(\overline{z})} & =&0  \;\;\;\mbox{for all}\;z\in \mc.\label{con2}\end{eqnarray}
The display \eqref{con2} implies
\begin{eqnarray} \frac{G(\overline{z})}{\overline{F(z)}}=\frac{G(z)}{\overline{F(\overline{z})}}.\label{con22} \end{eqnarray} which entails both sides is  a constant denoting by $\delta$ on $\mc.$ That is to say $G(\overline{z})=\delta \overline{F(z)}$. Hence the display \eqref{con1} turns into
\begin{eqnarray} |F(z)|^2+|\delta|^2|\overline{F(z)}|^2=(1+|\delta|^2)|F(z)|^2=|\lambda|^2 e^{(1-|\lambda|^2)|z|^2}\;\;\;\label{con11} \end{eqnarray} $\mbox{for all}\;z\in \mc.$ Furthermore, it holds that
$$ (1+|\delta|^2) F(z)\overline{F(w)} =|\lambda|^2 e^{(1-|\lambda|^2)z\overline{w}}\;\;\;\mbox{for all}\;z\in \mc.$$ Setting $w=0,$ we deduce that \[(1+|\delta|^2) F(z)\overline{F(0)}=|\lambda|^2,\] which implies $F(z)$  is a constant function  and then $|\lambda|=1$ in \eqref{con11}. Consequently, $G(z)=\delta \overline{F(\overline{z})}$ is also a constant function. Moreover, observing \eqref{con1}, it turns out \[|F(z)|^2+|G(z)|^2=|f_I(z)|^2=1\] which completes the proof.  \zb
For the case $|\lambda|=1$, we construct a unitary operator $W_{k_{\overline{\lambda} b},\varphi}$  on $\mf$ by $\varphi(p)=p\lambda-b$.
\begin{prop}\label{prop uni}Let $\varphi(p)=p\lambda-b$ with $|\lambda|=1$ and $\lambda,\;b\in \mc$ such that $\varphi(\mc)\subset \mc$ for some $I\in\ms.$ Then the weighted composition operator $W_{k_{\overline{\lambda} b},\varphi}$ is a unitary operator in $\mf$ and it also holds that \[W_{k_{\overline{\lambda} b},\varphi}^{-1}=W_{k_{\overline{\lambda} b},\varphi}^*=W_{k_{-b},\varphi^{-1}}.\] \end{prop}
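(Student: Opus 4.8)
The plan is to work entirely on a fixed slice $\mc$, since the norm in \eqref{norm} and the action of $W_{f,\varphi}$ are determined by restrictions to $\mc$ while the representation formula recovers the extension to $\hh$. The decisive simplification is that both intended weights have vanishing $J$-component: since $K_{\overline{\lambda}b}(p)=e_*^{p\overline{q}}$ with $q=\overline{\lambda}b\in\mc$, one has $(k_{\overline{\lambda}b})_I(z)=e^{-|b|^2/2}e^{\overline{b}\lambda z}$ and likewise $(k_{-b})_I(z)=e^{-|b|^2/2}e^{-\overline{b}z}$, both lying in $\mc$ with no $J$-part. By \eqref{star}, a $\star$-product $\xi_I\star g_I$ with $\xi_I=F$ and $G\equiv0$ collapses to ordinary left multiplication $F\cdot g_I$, so that
\[(W_{k_{\overline{\lambda}b},\varphi}h)_I(z)=e^{-|b|^2/2}e^{\overline{b}\lambda z}\,h_I(z\lambda-b).\]
First I would prove this operator is an isometry. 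Substituting $w=\varphi_I(z)=z\lambda-b$ is an affine change of variable whose linear part is multiplication by $\lambda$ with $|\lambda|=1$, hence area preserving, so $d\sigma(z)=d\sigma(w)$; collecting the exponents $-|b|^2+2\,\mathrm{Re}(\overline{b}\lambda z)-|z|^2$ and using $z\lambda=w+b$ together with $|z|^2=|w+b|^2$, one finds the exponent reduces exactly to $-|w|^2$, whence $\|W_{k_{\overline{\lambda}b},\varphi}h\|^2=\|h\|^2$.

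Next I would identify and analyze the candidate inverse. Solving $\varphi(p)=p\lambda-b$ gives $\varphi^{-1}(p)=p\overline{\lambda}+b\overline{\lambda}$, which again maps $\mc$ into $\mc$ and is of the same affine type; writing it as $p\lambda'-b'$ with $\lambda'=\overline{\lambda}$ and $b'=-b\overline{\lambda}$, one checks $\overline{\lambda'}b'=-b$, so the weight attached to $\varphi^{-1}$ by the template of Proposition \ref{prop lambda1} (equivalently, the form of the present proposition applied to $\varphi^{-1}$) is precisely $k_{-b}$. Consequently the same computation as above shows $W_{k_{-b},\varphi^{-1}}$ is itself an isometry, in particular bounded on $\mf$.

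Then I would verify that these two isometries are mutually inverse. Since both weights are scalar, the composite acts by
\[(W_{k_{\overline{\lambda}b},\varphi}W_{k_{-b},\varphi^{-1}}h)_I(z)=e^{-|b|^2/2}e^{\overline{b}\lambda z}\cdot e^{-|b|^2/2}e^{-\overline{b}\varphi_I(z)}\,h_I\bigl(\varphi_I^{-1}(\varphi_I(z))\bigr).\]
Because $\varphi_I^{-1}\circ\varphi_I=\mathrm{id}$ and $-\overline{b}\varphi_I(z)=-\overline{b}\lambda z+|b|^2$, the two exponential factors cancel and the scalar prefactors multiply to $1$, giving $h_I(z)$; the reverse composition is handled identically. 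Hence $W_{k_{-b},\varphi^{-1}}$ is a two-sided inverse. Combining this with the isometry established above yields that $W_{k_{\overline{\lambda}b},\varphi}$ is a surjective isometry, i.e. unitary, and for a unitary the inverse coincides with the adjoint, so $W_{k_{\overline{\lambda}b},\varphi}^{-1}=W_{k_{\overline{\lambda}b},\varphi}^{*}=W_{k_{-b},\varphi^{-1}}$.

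I expect the main obstacle to be organizational rather than conceptual: keeping the $\star$-product bookkeeping honest (justifying the reduction to scalar multiplication via \eqref{star}) and carrying the exponent algebra through the affine substitution so that all the $|b|^2$ and $\mathrm{Re}(\overline{b}w)$ terms cancel cleanly. Verifying that $k_{-b}$ is exactly the weight attached to $\varphi^{-1}$, encoded in the identity $\overline{\lambda'}b'=-b$, is the one place where a sign slip would break the argument.
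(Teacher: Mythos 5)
Your proposal is correct and follows essentially the same route as the paper: reduce to the slice $\mathbb{C}_I$ where the weight $k_{\overline{\lambda}b}$ has no $J$-component so the $\star$-product degenerates to scalar multiplication, prove the isometry by the measure-preserving affine substitution $w=z\lambda-b$, and verify by direct exponent cancellation that $W_{k_{-b},\varphi^{-1}}$ is an inverse, whence unitarity and $W^{-1}=W^{*}$. Your extra check that $k_{-b}$ is the canonical weight attached to $\varphi^{-1}$ (via $\overline{\lambda'}b'=-b$) is a nice consistency observation the paper omits but does not need.
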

 \pf Let $h\in \mf,$ and $w=z\lambda-b\in \mc, $ denote by $w=u+Iv$ and $z=x+Iy$. We have
  \begin{eqnarray*}\|h\|^2&=&\int_{\mc}|h_I(w)|^2 e^{-|w|^2} d\sigma(u,v)\nonumber\\&=&\int_{\mc}|h_I(z\lambda -b)|^2 e^{-|z\lambda -b|^2} d\sigma(x,y)\nonumber\\&=&\int_{\mc}|h_I(z\lambda -b)|^2 |e^{ \lambda \overline{b} z}|^2 e^{-|b|^2-|z|^2} d\sigma(x,y)\nonumber\\&=&\int_{\mc}\left|e^{\lambda \overline{b}z-|b|^2/2}\right|^2 |h_I(z\lambda -b)|^2 |e^{ -|z|^2} d\sigma(x,y)\nonumber\\&=& \int_{\mc}|k_{\overline{\lambda} b}(z)|^2 |h_I(z\lambda -b)|^2 |e^{ -|z|^2} d\sigma(x,y) \nonumber\\&=&\int_{\mc}|k_{\overline{\lambda} b}(z)\star h_I(z\lambda -b)|^2 |e^{ -|z|^2} d\sigma(x,y) \nonumber\\&=&\|W_{k_{\overline{\lambda}b},\varphi} h\|^2,
  \end{eqnarray*}
  which implies $W_{k_{\overline{\lambda}b},\varphi}$ is an isometry and \[W_{k_{\overline{\lambda}b},\varphi}^* W_{k_{\overline{\lambda}b},\varphi}=I,\] the identity map. On the other hand,  for $h\in \mf$ and $z\in \mc,$ it yields that
  \begin{eqnarray} &&[W_{k_{\overline{\lambda} b},\varphi} W_{k_{-b},\varphi^{-1}} h_I](z)\nonumber\\&=&W_{k_{\overline{\lambda} b},\varphi}(W_{k_{-b},\varphi^{-1}} h_I(z)) \nonumber\\&=& k_{\overline{\lambda} b}(z) \star (W_{k_{-b,\varphi^{-1}}} h_I(\varphi_I(z)))\nonumber\\&=& k_{\overline{\lambda} b}(z) \star k_{-b}(\varphi_I(z))\star h_I(z)\nonumber\\&=&h_{I}(z),\nonumber \end{eqnarray}
  which entails  $W_{k_{\overline{\lambda} b},\varphi} W_{k_{-b},\varphi^{-1}}=I.$ Hence $W_{k_{\overline{\lambda} b},\varphi} $ is a unitary operator whose inverse is
 $W_{k_{-b},\varphi^{-1}}.$ This completes the proof.
  \zb
\begin{rem} \label{rem Wu}In the case $\varphi_1(p)=p-u$ with $u\in \mc,$ we denote
\begin{eqnarray}W_{u}=W_{k_u,\varphi_1}\label{wu}\end{eqnarray} a special weighted composition operator with weight $k_u$ and composition symbol $\varphi_1(p)=p-u.$ They satisfy the commutation relation: \begin{eqnarray*}W_uW_v=e^{i Im (u\overline{v})}W_{u+v},\label{commutation}\end{eqnarray*}which implies $W_{u}^{-1}=W_{-u}.$\end{rem}
We are now in a position to describe all isometric weighted composition operators on $\mf$.
\begin{thm}\label{iso} Let $f$ and $\varphi$ be two slice regular functions on $\mathbb{H},$ such that $f_I(z)=F(z)+G(z)J$ is not identically zero  and $\varphi(\mathbb{C}_I) \subset \mathbb{C}_I$ for some $I\in \mathbb{S}$.  Then $W_{f,\varphi}$ is isometric on $\mf$ if and only if  $$(W_{f,\varphi} h)_I(z)= (\alpha +\beta  J)\star( W_{k^2_{-\overline{\lambda} b},\varphi}h)_I(z),$$  with $\alpha, \beta \in \mc$ and $|\alpha|^2+|\beta|^2=1, $ for any $h\in \mf.$
\end{thm}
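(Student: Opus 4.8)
The plan is to recognize an isometric $W_{f,\varphi}$ as a norm-preserving left $\star$-multiplication by a constant quaternion followed by one of the unitary weighted composition operators of Proposition \ref{prop uni}, using only the associativity of the $\star$-product (and never the, in general false, distributivity of $\star$ over composition). Sufficiency is then immediate: if $(W_{f,\varphi}h)_I=(\alpha+\beta J)\star(W_{k^2_{-\overline{\lambda}b},\varphi}h)_I$ with $|\alpha|^2+|\beta|^2=1$, the inner factor is the unitary of Proposition \ref{prop uni} and hence norm-preserving, while left $\star$-multiplication by the constant $\alpha+\beta J$ is precisely the operator $W_{\alpha+\beta J,\mathrm{id}}$, i.e. the case $\lambda=1$ of Proposition \ref{prop pla}, which is an isometry exactly because $|\alpha|^2+|\beta|^2=1$; a composition of an isometry with a unitary is an isometry, so $\|W_{f,\varphi}h\|=\|h\|$ for every $h\in\mf$.

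For necessity I would first note that an isometry is bounded, so Theorem \ref{thm bou} gives $\varphi(p)=p\lambda+\varphi(0)$ with $|\lambda|\le 1$ and $f\in\mf$, and moreover $\|f\|=\|W_{f,\varphi}1\|=\|1\|=1$. Writing $b=\varphi(0)$ and letting $W_b=W_{k_b,\varphi_1}$, $\varphi_1(p)=p-b$, be the unitary translation of Remark \ref{rem Wu}, the crucial step is to evaluate the product $W_{f,\varphi}W_b$ directly on the slice $\mc$. Because $W_b$ carries the scalar ($G\equiv 0$) weight $k_b$ and the shift $\varphi_1(p)=p-b$ exactly cancels the constant term of $\varphi$, a computation with the splitting rule \eqref{star} shows that $W_{f,\varphi}W_b=W_{\xi,\eta}$, where $\eta(p)=p\lambda$ and $\xi_I(z)=F(z)k_b(z\lambda+b)+G(z)\overline{k_b(\overline{z}\lambda+b)}\,J$. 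Verifying this identity is the main obstacle, since one cannot invoke $(A\star B)\circ\varphi=(A\circ\varphi)\star(B\circ\varphi)$ --- $\varphi$ preserves only the single slice $\mc$ --- and must instead match the two components of the splitting separately.

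With $W_{f,\varphi}W_b=W_{\xi,\eta}$ established, $W_{\xi,\eta}$ is isometric (a composition of the isometry $W_{f,\varphi}$ with the unitary $W_b$) and has the special symbol $\eta(p)=p\lambda$, so Proposition \ref{prop pla} applies and forces $|\lambda|=1$ together with $\xi_I\equiv\alpha+\beta J$ constant and $|\alpha|^2+|\beta|^2=1$. It then remains to unwind the factorization: from $W_{f,\varphi}=W_{\xi,\eta}W_b^{-1}$ and the constancy of $\xi_I$, $\star$-associativity rewrites the right-hand side as left $\star$-multiplication by $\alpha+\beta J$ composed with the unitary $W_{k^2_{-\overline{\lambda}b},\varphi}$ of Proposition \ref{prop uni}, which is the claimed form. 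As a cross-check one may instead feed $|\lambda|=1$ into Proposition \ref{prop lambda1} to obtain the explicit exponential shape of $f_I$, recognize it as $(\alpha+\beta J)\star(\text{reproducing-kernel weight})$, and read off $|\alpha|^2+|\beta|^2=\|f\|^2=1$.
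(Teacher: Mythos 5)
Your argument follows the paper's proof essentially step for step: the same factorization $W_{f,\varphi}W_b=W_{\xi,\eta}$ with $\eta(p)=p\lambda$ and $\xi_I(z)=F(z)k_b(z\lambda+b)+G(z)\overline{k_b(\overline{z}\lambda+b)}J$, the same appeal to Proposition \ref{prop pla} to force $|\lambda|=1$ and $\xi_I\equiv\alpha+\beta J$ with $|\alpha|^2+|\beta|^2=1$, and the same unwinding through the unitary $W_{-b}$; the sufficiency direction (unimodular constant left $\star$-multiplier, which is the $\lambda=1$ case of Proposition \ref{prop pla}, composed with a unitary) is also the paper's. One substantive caveat: the inner factor you identify as ``the unitary of Proposition \ref{prop uni}'' is written with weight $k^2_{-\overline{\lambda}b}$, whereas the operator that Proposition \ref{prop uni} actually makes unitary for the symbol $\varphi(p)=p\lambda+b$ has weight $k_{-\overline{\lambda}b}$, not its square. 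A careful unwinding of $W_{\xi,\eta}W_{-b}$ with \emph{constant} $\xi_I=\alpha+\beta J$ produces exactly one kernel factor, namely $k_{-b}(\lambda z)=k_{-\overline{\lambda}b}(z)$, so the exponent $2$ in the displayed formula is spurious; the paper introduces it by writing $\hat f_I=(\alpha+\beta J)\star k_{-\overline{\lambda}b}$ where it means $f_I$ (the constant symbol $\hat f_I$ contributes no kernel factor), and indeed the paper's own sufficiency paragraph quietly reverts to $W_{k_{-\overline{\lambda}b},\varphi}$. Since you inherited the square from the statement rather than introducing it, and your overall scheme is the paper's, the proof is sound once $k^2_{-\overline{\lambda}b}$ is replaced by $k_{-\overline{\lambda}b}$; as it stands, the claim that $W_{k^2_{-\overline{\lambda}b},\varphi}$ is norm-preserving would fail for $b\neq 0$.
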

\pf
\emph{Necessity}.  Suppose the operator $W_{f,\varphi}$ is isometric on $\mf$, then it is bounded, hence $\varphi(p)=b+p\lambda$ for some $|\lambda|\leq 1$, $\lambda,\;b\in \mc$ by  Theorem \ref{thm bou}.

We denote $W_{\hat{f},\hat{\varphi}}=W_{f,\varphi} W_b,$ where $W_b$ defined in \eqref{wu} with $b\in \mc$ and $\varphi_1(p)=p-b.$  After that,  for any $h_I(z)=H(z)+K(z)J\in \mf,$ we deduce that  \begin{eqnarray*} &&(W_{\hat{f},\hat{\varphi}} h)_I(z)\\&=&(W_{f,\varphi} W_bh)_I(z)\\&=&[F(z)k_b(b+z\lambda)+G(z)\overline{k_b(b+\overline{z}\lambda)} J]\star [H(z\lambda)+K(z\lambda)J].\end{eqnarray*} That is to say  $\hat{\varphi}(p)=p\lambda$ and \[\hat{f}_I(z)=F(z)k_b(b+z\lambda)+G(z)\overline{k_b(b+\overline{z}\lambda)}.\] Moreover, if the operator $W_{\hat{f},\hat{\varphi}}$ is isometric, Proposition \ref{prop pla} entails that $|\lambda|=1$ and \[|\hat{f}_I|^2=|F(z)k_b(b+z\lambda)|^2+|G(z)\overline{k_b(b+\overline{z}\lambda)}|^2=1,\] with $F(z)k_b(b+z\lambda)$ and $G(z)\overline{k_b(b+\overline{z}\lambda)}$ are constants denoted by $\alpha\in \mc$ and $\beta\in\mc$, respectively. Therefore,   $$F(z)=\alpha k_{-\overline{\lambda} b}(z)\;\;\mbox{ and}\;\;G(z)=\beta k_{-\lambda \overline{b}}(z).$$ This means \[\hat{f}_I(z)=(\alpha+\beta J)\star k_{-\overline{\lambda} b}(z).\]
Hence, by Proposition \ref{prop uni} and Remark \ref{rem Wu}, we obtain \[W_{f,\varphi}=W_{\hat{f},\hat{\varphi}} W_{-b},\] which can be also rewritten as
\begin{eqnarray*}&&(W_{f,\varphi}h)_I(z)\\&=&(W_{\hat{f},\hat{\varphi}} W_{-b} h)_I(z)\nonumber\\&=&\hat{f}_I(z)\star[k_{-b}(\lambda z)(H(z\lambda+b)+K(z\lambda +b)J)]\\&=&(\alpha+\beta J)\star k_{-\overline{\lambda} b}(z)\star[k_{-b}(\lambda z)(H(z\lambda+b)+K(z\lambda +b)J)]\\&=& (\alpha +\beta  J) \star k^2_{-\overline{\lambda} b}( z) \star (H(z\lambda+b)+K(z\lambda +b)J)\\&=& (\alpha +\beta  J)\star( W_{k^2_{-\overline{\lambda} b},\varphi}h)_I(z),\end{eqnarray*}  with $\alpha, \beta \in \mc$ and $|\alpha|^2+|\beta|^2=1.$

\emph{Sufficiency}. Suppose $$(W_{f,\varphi} h)_I(z)= (\alpha +\beta  J)\star( W_{k_{-\overline{\lambda} b},\varphi}h)_I(z),$$  with $\alpha, \beta \in \mc$ and $|\alpha|^2+|\beta|^2=1, $ for any $h\in \mf.$ It holds that \[W_{f,\varphi}=W_{\hat{f},\hat{\varphi}} W_{-b}.\] For any element $h\in \mf,$ there exists $g\in \mf$ such that \[g=W_{-b} h.\]  Based on the fact $W_{-b}$ is a unitary operator on $\mf$, it follows that $\|h\|=\|g\|.$ Since $W_{\hat{f},\hat{\varphi}}$ is isometric on $\mf,$ it yields that
$$\|W_{f,\varphi}h\|=\|W_{\hat{f},\hat{\varphi}} W_{-b} h\|=\|W_{\hat{f},\hat{\varphi}} g\|=\|g\|=\|h\|.$$ That is to say
$W_{f,\varphi}$ is isometric on $\mf$. This ends the proof. \zb

\section{conjugate $\mathcal{C}_{a,b,c}-$commuting weighted composition operators on $\mf$}
Based on the previous work in Section 3, we  define a general weighted composition right-conjugation $\mathcal{C}_{a,b,c,d}$ on $\mf$. Then we give equivalent conditions ensuring the weighted composition operators which commute  with respect to our conjugation $\mathcal{C}_{a,b,c}$ on $\mf.$ First, we introduce the definition of the  right-conjugation (left-conjugation) and conjugation  in general Hilbert space $\mathcal{H}$ on $\hh.$
\begin{defn}\label{defn anti} A mapping $T$ acting on a quaternion Hilbert space $\mathcal{H}$ is said to be right-anti-complex-linear (also right-conjugate-linear) associated to the slice $\mc$, if $T(pa+qb)=T(p)\overline{a}+T(q)\overline{b},$ for any $ p, q\in \mathcal{H} \;\;\mbox{and any}\; a,b\in \mc. $ Similarly, $T$ on $\mathcal{H}$   is left-anti-linear (also left-conjugate-linear) in $\hh$, if $T(ap+bq)=\overline{a}T(p)+\overline{b}T(q),$ for any $ p, q\in \mathcal{H} \;\;\mbox{and any}\; a,b\in \mc. $  Furthermore, $T$ on $\mathcal{H}$ is anti-linear in $\hh$, if $T$ is both left-anti-linear and right-anti-linear. \end{defn}

In order to construct complex symmetric operators in the quaternion Fock space, we call a map is linear which means that it is complex linear associated with $\mc$, nor quaternion linear.
\begin{defn} \label{conjugation}A right anti-(complex)-linear (left-anti-linear) mapping $\mathcal{C}:\;\mathcal{H}\rightarrow \mathcal{H}$ is called a right-conjugation (left-conjugation), if it is \\
$(i)$ involutive:\; $\mathcal{C}^2=I$, the identity operator;\\
$(ii)$ isometric:\; $\|\mathcal{C} f\|=\|f\|$, for all $f\in \mathcal{H}.$ \\
 Furthermore, an anti-linear mapping $\mathcal{C}$ which is involutive and   isometric is called a
 \emph{conjugation}. \end{defn}

Firstly, we  define a mapping \begin{eqnarray}\mathcal{C}f_I(z)=\overline{F(\overline{z})}+\overline{G(\overline{z})}J \label{C}\end{eqnarray}  for $f_I(z)=F(z)+G(z)J$ in $\mf.$  It's easy to check that \begin{eqnarray*}\mathcal{C}(f_I(z)a)&=&\overline{F(\overline{z}) a}+\overline{G(\overline{z})\overline{a}}J\\&=&(\overline{F(\overline{z})}+ \overline{G(\overline{z})}J)\overline{a}\\&=&\mathcal{C}f_I(z)\overline{a}\end{eqnarray*}
for any $a\in \mc.$
 It  turns out that
 \begin{eqnarray}\mathcal{C}( f_I(z)a+g_I(z)b)=  \mathcal{C} f_I(z)\overline{a}+  \mathcal{C} g_I (z) \overline{b}\label{antic}\end{eqnarray} and also holds that $$\mathcal{C}( af_I(z)+bg_I(z))= \overline{a} \mathcal{C} f_I(z)+  \overline{b} \mathcal{C} g_I (z).$$ The above two inequalities entail $\mathcal{C}$ is anti-linear on  $\mf.$    Meanwhile, $  \mathcal{C}^2 f_I=f_I.$   That is to say $\mathcal{C}$ is  anti-linear  and involutive on $\mf$. Furthermore, \begin{eqnarray*}\|\mathcal{C}f\|^2&=&\int_{\mc} |(\mathcal{C}f)_I(z)|^2 e^{-|z|^2}d\sigma(x,y)\\&=&\int_{\mc}( |\overline{F(\overline{z}) }|^2 +|\overline{G(\overline{z}) }|^2 )e^{-|z|^2}d\sigma(x,y)\nonumber\\&=&\int_{\mc}( |F(\overline{z})|^2 +|G(\overline{z}) |^2 )e^{-|\overline{z}|^2}d\sigma(x,y)\\&=&\|f\|^2,\end{eqnarray*} which implies
 $\|\mathcal{C} f\|=\|f\|$   on $\mf.$ The above facts ensure  the mapping $\mathcal{C}$ is a\emph{ conjugation} on $\mf.$
  It's also true that \begin{eqnarray}&&\mathcal{C}(f_I\star g_I) = \mathcal{C}f_I\star \mathcal{C} g_I\nonumber\\&=&[\overline{F(\overline{z})}\;\overline{H(\overline{z})}-
 \overline{G(\overline{z})}K(z)]+[\overline{F(\overline{z})}\;\overline{K(\overline{z})}+
 \overline{G(\overline{z})}H(z)]J.\label{CC} \end{eqnarray}
 Now we introduce some definitions for   (right-)conjugate  $\mathcal{C}-$commuting  operator and  (right-)complex $\mathcal{C}-$symmetric  operator.
  \begin{defn} \label{complex-c} A bounded linear operator $T$ on $\mathcal{H}$ is called a right-conjugate (or  left-conjugate) $\mathcal{C}-$commuting  operator, if there exists a right-conjugation (or left-conjugation) $\mathcal{C}$ on $\mathcal{H}$ such that $\mathcal{C}T=T \mathcal{C}.$ A operator $T$ is called a conjugate $\mathcal{C}-$commuting  operator if   the mapping $\mathcal{C}$ is a conjugation.\end{defn}
  \begin{defn}\label{complex-s} A bounded linear operator $T$ on $\mathcal{H}$ is called  a right-complex $\mathcal{C}-$symmetric (or left-complex $\mathcal{C}-$symmetric) operator, if there exists a right-conjugation (or left-conjugation) $\mathcal{C}$ on $\mathcal{H}$ such that $T\mathcal{C}=\mathcal{C}T^*.$  Furthermore,  $T$ is  called  complex $\mathcal{C}-$symmetric operator if the mapping  $\mathcal{C}$ is a conjugation.
  \end{defn}
Considering the concrete conjugation $\mathcal{C}$ defined in \eqref{C} on the space $\mf$, we offer the equivalent descriptions for weighted composition operator  $W_{f,\varphi}$  is  a conjugate $\mathcal{C}-$commuting  operator on $\mf.$
 \begin{thm}\label{thm C-}Let $f$ and $\varphi$ be two slice regular functions on $\hh$, such that $f$ is not identically zero and $\varphi(\mc)\subset\mc$ for some $I\in\ms.$ Suppose the operator $W_{f,\varphi}$ is bounded on $\mf,$ that is,  $\varphi(p)=pA+B$ with $|A|\leq 1$ and $A \in \mc.$ Then $W_{f,\varphi}$  is  a conjugate  $\mathcal{C}-$commuting operator if and only if
 \begin{eqnarray}f_I=\mathcal{C} f_I,\;\overline{A}=A\;\mbox{and}\;\overline{B}=B.\label{c-comm} \end{eqnarray}\end{thm}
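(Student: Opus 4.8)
The plan is to build everything on the multiplicativity of $\mathcal{C}$ with respect to the $\star$-product recorded in \eqref{CC}, thereby reducing the commuting relation to a single comparison of two composed functions. Since $W_{f,\varphi}h = f\star(h\circ\varphi)$ and $\mathcal{C}(u\star v)=\mathcal{C}u\star\mathcal{C}v$, I would first rewrite
\[
\mathcal{C}(W_{f,\varphi}h)=\mathcal{C}f\star\mathcal{C}(h\circ\varphi),\qquad W_{f,\varphi}(\mathcal{C}h)=f\star\big((\mathcal{C}h)\circ\varphi\big),
\]
so that the condition $\mathcal{C}W_{f,\varphi}=W_{f,\varphi}\mathcal{C}$ from Definition \ref{complex-c} is equivalent to the identity $\mathcal{C}f\star\mathcal{C}(h\circ\varphi)=f\star\big((\mathcal{C}h)\circ\varphi\big)$ holding for every $h\in\mf$. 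As both sides are determined on the slice $\mc$ and extended by the representation formula (Proposition \ref{Prop RF}), it suffices to verify this on $\mc$. Writing $h_I(z)=H(z)+K(z)J$ and using $\varphi_I(z)=zA+B$, a direct application of \eqref{C} gives the two key expressions
\[
\mathcal{C}(h\circ\varphi)_I(z)=\overline{H(\overline{z}A+B)}+\overline{K(\overline{z}A+B)}J,\qquad
\big((\mathcal{C}h)\circ\varphi\big)_I(z)=\overline{H(\overline{z}\,\overline{A}+\overline{B})}+\overline{K(\overline{z}\,\overline{A}+\overline{B})}J .
\]

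For the sufficiency direction I would assume $f_I=\mathcal{C}f_I$, $\overline{A}=A$ and $\overline{B}=B$. Then the arguments $\overline{z}A+B$ and $\overline{z}\,\overline{A}+\overline{B}$ coincide, so the two displayed functions are identical, i.e. $\mathcal{C}(h\circ\varphi)=(\mathcal{C}h)\circ\varphi$. Combining this with $\mathcal{C}f=f$ yields $\mathcal{C}(W_{f,\varphi}h)=f\star\big((\mathcal{C}h)\circ\varphi\big)=W_{f,\varphi}(\mathcal{C}h)$ at once, proving $W_{f,\varphi}$ is conjugate $\mathcal{C}$-commuting.

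For the necessity direction I would run the argument backwards. Choosing $h\equiv 1$ (so $h\circ\varphi=1$ and $\mathcal{C}h=1$) and using that $1$ is the $\star$-unit collapses the commuting identity to $\mathcal{C}f\star 1=f\star 1$, that is $f_I=\mathcal{C}f_I$. Substituting this back reduces the identity to $f\star\mathcal{C}(h\circ\varphi)=f\star\big((\mathcal{C}h)\circ\varphi\big)$ for all $h$. Here the crucial step is to cancel the nonzero factor $f$: since the $\star$-product of entire slice regular functions has no zero divisors (the lowest nonvanishing coefficient of $u\star v$ equals the product of the lowest nonvanishing coefficients of $u$ and $v$, which is nonzero in $\hh$), left $\star$-multiplication by $f\not\equiv 0$ is injective, whence $\mathcal{C}(h\circ\varphi)=(\mathcal{C}h)\circ\varphi$ for every $h$. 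Finally, taking $h(p)=p$ and comparing the two expressions above forces $\overline{z}\,\overline{A}+\overline{B}=\overline{z}A+B$ for all $z\in\mc$, which gives $\overline{A}=A$ and $\overline{B}=B$.

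The step I expect to be the main obstacle is the bookkeeping of conjugate arguments: $\mathcal{C}$ and precomposition with $\varphi$ fail to commute precisely because $\varphi$ may have non-real coefficients, and tracking where $\overline{z}$, $\overline{A}$, $\overline{B}$ appear (together with checking that \eqref{CC} is legitimately applied with the composed function $h\circ\varphi$ in the second slot) is where ordering and conjugation errors are most likely to creep in. A secondary point requiring care is the cancellation of $f$, which rests on the integral-domain property of the $\star$-product rather than on pointwise nonvanishing of $f$.
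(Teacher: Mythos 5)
Your proposal is correct and follows essentially the same route as the paper: sufficiency via the multiplicativity $\mathcal{C}(u\star v)=\mathcal{C}u\star\mathcal{C}v$ of \eqref{CC}, and necessity by testing the commuting identity on $h=1$ and $h(p)=p$. The only cosmetic difference is that you cancel the nonzero factor $f$ by invoking the absence of zero divisors for the $\star$-product, whereas the paper expands the splitting $f_I=F+GJ$ and uses that $F$ or $G$ is not identically zero --- these amount to the same thing.
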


 \pf \emph{Necessity.}  Suppose  $W_{f,\varphi}$  is  a conjugate $\mathcal{C}-$commuting operator, that is \begin{eqnarray}W_{f,\varphi} \mathcal{C} h=\mathcal{C} W_{f,\varphi}h\label{comm} \end{eqnarray} for any $h\in \mf.$ Particularly, taking $h=1$   in \eqref{comm}, it yields that
 \begin{eqnarray*} W_{f_I,\varphi_I} \mathcal{C} 1=W_{f_I,\varphi_I} 1=f_I \;\;\mbox{and}\;\;\mathcal{C} W_{f_I,\varphi_I} 1=\mathcal{C}f_I.\end{eqnarray*} The above formulas entail \begin{eqnarray}f_I=\mathcal{C} f_I.\label{Cfi}\end{eqnarray}
 On the other side, letting $h_I(z)=z$ in \eqref{comm} and employing \eqref{Cfi}, we deduce
 \begin{eqnarray*} W_{f_I,\varphi_I} \mathcal{C} z&=&W_{f_I,\varphi_I} z=f_I\star (Az+B),\\ \mathcal{C} W_{f_I,\varphi_I} z&=&\mathcal{C}[f_I\star (Az+B)]\\&=&\mathcal{C}(f_I)\star(\overline{A}z+\overline{B})
\\& =&f_I\star(\overline{A}z+\overline{B}).\end{eqnarray*}
  That is to say \[f_I\star (Az+B)=f_I\star(\overline{A}z+\overline{B}).\]Assuming $f_I(z)=F(z)+G(z)J$, it turns out
  \begin{eqnarray} &&F(z)(Az+B)+G(z)(\overline{A}z+\overline{B}) J\nonumber\\&=& F(z)(\overline{A}z+\overline{B})+G(z)(Az+B) J.~~\label{ab}\end{eqnarray}
 Since $f$ is not identically zero, therefore  at least one of $F$ and $G$ is not identically zero. The formula \eqref{ab} entails that $Az+B=\overline{A}z+\overline{B},$ that is $\overline{A}=A$ and $\overline{B}=B.$

\emph{ Sufficiency.} Suppose the conditions \eqref{c-comm} holds. For any $h\in \mf$ with $h_I(z)=H(z)+K(z)J$ on $\mc,$ it follows that
\begin{eqnarray*} &&W_{f,\varphi} \mathcal{C} h_I(z)\nonumber\\&=&f_I(z)\star [\overline{H(\overline{Az+B})}+\overline{G(\overline{Az+B})}J] \nonumber\\&=&\mathcal{C}f_I(z)\star [\overline{H( A\overline{z}+B)}+\overline{G(A\overline{z}+B)}J]\nonumber
\\&=&\mathcal{C}f_I(z)\star \mathcal{C}[ H(Az+B)+ G(A z+B)J] \nonumber\\&=&
\mathcal{C}[f_I\star (H( Az+B)+ G(A z+B)J) ]
\nonumber\\&=&\mathcal{C} W_{f,\varphi} h_I(z),\end{eqnarray*} which reveals that $W_{f,\varphi}$  is a conjugate $\mathcal{C}-$commuting operator on $\mf$.  \zb

   In the following, we define an isometry  $A_b:\;\mf \rightarrow \mf$ by \begin{eqnarray*}A_b f_I(z)&=&k_{-b}(z)\star (\overline{F(\overline{z+b})})+\overline{G(\overline{z+b})})J \nonumber\\&=&k_{-b}(z)\star ( \mathcal{C} f_I)(z+b)
 ,\end{eqnarray*}  where $k_b= K_b/ \|K_b\| $ the unit vector in $\mf$ with $b\in \mc.$ The following lemma reveals   the operator $A_b$  is isometric on $\mf.$

 \begin{lem} \label{lem Ab}For any $b\in \mc,$ $A_b$ is isometric on $\mf.$ \end{lem}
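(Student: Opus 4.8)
The plan is to compute $\|A_b f\|^2$ directly from the defining integral \eqref{norm} and to show it equals $\|f\|^2$ after a single change of variables. First I would unwind the definition of $A_b f_I$. Because $k_{-b}$ is $\mc$-valued on the slice (it carries no $J$-component), the $\star$-product formula \eqref{star} applied to the factor $(\mathcal{C} f_I)(z+b)=\overline{F(\overline{z+b})}+\overline{G(\overline{z+b})}J$ degenerates to ordinary scalar multiplication of each component, giving
\[A_b f_I(z)=k_{-b}(z)\,\overline{F(\overline{z+b})}+k_{-b}(z)\,\overline{G(\overline{z+b})}\,J,\]
and hence
\[|A_b f_I(z)|^2=|k_{-b}(z)|^2\left(|F(\overline{z+b})|^2+|G(\overline{z+b})|^2\right).\]
Using $K_{-b}(z)=e^{-z\overline{b}}$ on $\mc$ together with $\|K_{-b}\|^2=e^{|b|^2}$ from \eqref{ker-norm}, I would record the weight as $|k_{-b}(z)|^2=e^{-2\,\mathrm{Re}(z\overline{b})-|b|^2}$.

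The heart of the argument is then the substitution $w=\overline{z+b}$ in the integral $\|A_b f\|^2=\int_{\mc}|A_b f_I(z)|^2 e^{-|z|^2}\,d\sigma(x,y)$. Writing $z=x+Iy$ and $b=b_1+Ib_2$, this map sends $(x,y)\mapsto(u,v)=(x+b_1,\,-(y+b_2))$, a translation composed with a reflection, so its Jacobian has modulus one and $d\sigma(x,y)=d\sigma(u,v)$. Moreover the amplitude transforms trivially, since $|F(\overline{z+b})|^2+|G(\overline{z+b})|^2=|F(w)|^2+|G(w)|^2$; the entire content of the lemma therefore concerns the exponential weight.

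The main obstacle, and the only genuine computation, is to verify that the total exponent collapses to $-|w|^2$. Exploiting that $\mc$ is commutative and that $\mathrm{Re}$ is invariant under conjugation, I would substitute $z=\overline{w}-b$ into $-2\,\mathrm{Re}(z\overline{b})-|b|^2-|z|^2$ and simplify using the identities $|z|^2=|w|^2-\overline{w}\,\overline{b}-bw+|b|^2$, $\mathrm{Re}(z\overline{b})=\mathrm{Re}(wb)-|b|^2$, and $\overline{w}\,\overline{b}+bw=2\,\mathrm{Re}(wb)$. Every term involving $b$ cancels, and the exponent reduces exactly to $-|w|^2$. This yields
\[\|A_b f\|^2=\int_{\mc}\left(|F(w)|^2+|G(w)|^2\right)e^{-|w|^2}\,d\sigma(u,v)=\|f\|^2,\]
so $A_b$ is isometric. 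Finally, to justify that $A_b$ genuinely maps into $\mf$, I would note that slice regularity of $A_b f$ is inherited from that of $\mathcal{C} f_I$ (see \eqref{C}), its translation by $b\in\mc$, and the $\star$-multiplication by the regular kernel $k_{-b}$, so the isometry identity is the substantive claim.
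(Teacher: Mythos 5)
Your proof is correct and follows essentially the same route as the paper: both reduce the $\star$-product to componentwise multiplication by the $\mathbb{C}_I$-valued weight $k_{-b}$, and both show that $|k_{-b}(z)|^2 e^{-|z|^2}$ transforms into the Gaussian weight at the translated point. The only cosmetic difference is that you fold the conjugation $z\mapsto\overline{z}$ into a single change of variables and verify the exponent cancellation by hand, whereas the paper substitutes $w=z+b$, invokes the identity $|k_{-b}(w-b)|^2e^{-|w-b|^2}=e^{-|w|^2}$, and then appeals to the previously established isometry of $\mathcal{C}$.
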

\pf Using the equation \begin{eqnarray}|k_{-b}(u-b)|^2 e^{-|u-b|^2}=e^{-|u|^2}\label{K}\end{eqnarray} for $b, u\in \mc,$ we deduce that
\begin{eqnarray*} \|A_bf\|^2&=&\int_{\mc}|(A_b f)_I(z)|^2  e^{-|z|^2} d\sigma(x,y)\nonumber\\&=& \int_{\mc}|k_{-b}(z)\star (\overline{F(\overline{z+b})})+\overline{G(\overline{z+b})})J)|^2 e^{-|z|^2} d\sigma(x,y)\nonumber\\&=&\int_{\mc} |k_{-b}(z)\cdot \overline{F(\overline{z+b})}+k_{-b}(z)\cdot \overline{G(\overline{z+b})})J)|^2 e^{-|z|^2} d\sigma(x,y)\nonumber\\&=&\int_{\mc} |k_{-b}(z)|^2 \left[|F(\overline{z+b})|^2+|G(\overline{z+b})|^2\right] e^{-|z|^2} d\sigma(x,y)\nonumber\\&=&\int_{\mc} |k_{-b}(w-b)|^2 \left[|F(\overline{w})|^2+|G(\overline{w})|^2\right] e^{-|w-b|^2} d\sigma(u,v)\nonumber\\&=& \int_{\mc} |k_{-b}(w-b)|^2 \left|\mathcal{C} f_I(w)\right|^2 e^{-|w-b|^2} d\sigma(u,v)\nonumber\\&=&
\int_{\mc}\left|\mathcal{C} f_I(w)\right|^2 e^{-|w|^2}d\sigma(u,v)\nonumber\\&=&\|\mathcal{C} f\|^2=\| f\|^2.\end{eqnarray*} \zb

At present,  given two  functions $\xi,\;\eta \in \mathcal{R}(\hh)$ with $\xi$ not identically zero and $\eta_I(\mc)\subset \mc$ for some $I\in \ms,$ we define a  weighted composition operator, \begin{eqnarray}(A_{\xi,\eta}f_I)(z)&=&\xi_I(z) \star [\overline{F(\overline{\eta_I(z)})}+\overline{G(\overline{\eta_I(z)})}J]\nonumber\\&=&\xi_I(z) \star (\mathcal{C}f_I)(\eta_I(z)),\label{anti-liear}
 \end{eqnarray} for $f_I(z)=F(z)+G(z)J\in \mf$ .
 Indeed, picking $g_I(z)=H(z)+K(z)J\in \mf$ and using \eqref{antic}   we deduce
   \begin{eqnarray} && (A_{\xi,\eta}( f_Ia+ g_I b))(z)\nonumber\\&=&\xi_I(z) \star [\mathcal{C}( f_I a+ g_I b)](\eta_I(z))\nonumber\\&=&\xi_I(z) \star [(\mathcal{C}f_I)(\eta_I(z))\overline{a}+(\mathcal{C}g_I)(\eta_I(z))\overline{b}]\nonumber
   \\&=& \xi_I(z) \star (\mathcal{C}f_I)(\eta_I(z)) \overline{a} +\xi_I(z) \star (\mathcal{C}g_I)(\eta_I(z))\overline{b}\nonumber\\&=&(A_{\xi,\eta}f_I)(z)\overline{a}
 +(A_{\xi,\eta}g_I)(z)\overline{b}\end{eqnarray} which together with
   Definition \ref{defn anti} validate $A_{\xi,\eta}$ is right-anti-linear  weighted composition operator.

 In the sequel, we  continue to find the equivalent characterizations for its boundedness, isometry and involution on $\mf.$  It's clear that $A_{\xi,\eta}$ is $\mf$-invariant if and only if it is bounded on $\mf.$ Compared with the arguments in Theorem \ref{thm bou}, we obtain a similar result for the boundedness of the operator  $A_{\xi,\eta}$   on $\mf.$
 \begin{prop}\label{prop-anti} The  right-anti-linear  weighted composition operator $ A_{\xi,\eta}:\;\mf \rightarrow \mf$ is bounded if and only if

 (1)$\xi \in \mf;$
(2)
\begin{eqnarray*}\hat{M}(\xi,\eta)&=&\sup\limits_{w\in \mc}\left(|F(w)|^2 e^{|\eta_I(w)|^2-|w|^2}+ |G(w)|^2e^{|\eta_I(\overline{w})|^2-|w|^2}\right)\\&=&\sup\limits_{w\in \mc} \left( \xi_{I}(w)\star e^{\frac{|\eta_I(w)|^2-|w|^2}{2}}    \right)
  <\infty,\end{eqnarray*}
 where $\xi_I(w)=F(w)+G(w)J$ with $I\perp J.$  Besides, in this case, $\eta(p)=pa+b$ satisfying  $|a|\leq 1,$ $a\in \mc.$\end{prop}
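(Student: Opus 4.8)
The plan is to reduce the boundedness of the right-anti-linear operator $A_{\xi,\eta}$ to that of the ordinary (right-linear) weighted composition operator $W_{\xi,\eta}$ through the conjugation $\mathcal{C}$, and then simply quote Theorem \ref{thm bou}. The first step is to establish the operator identity
$$A_{\xi,\eta}=W_{\xi,\eta}\,\mathcal{C}.$$
Indeed, unwinding the definition \eqref{anti-liear} and comparing it with the definition of $W_{\xi,\eta}$, for $f_I(z)=F(z)+G(z)J$ we have $(\mathcal{C}f)_I(z)=\overline{F(\overline{z})}+\overline{G(\overline{z})}J$ by \eqref{C}, and hence
$$W_{\xi,\eta}(\mathcal{C}f)_I(z)=\xi_I(z)\star\big((\mathcal{C}f)_I\circ\eta_I\big)(z)=\xi_I(z)\star\big[\overline{F(\overline{\eta_I(z)})}+\overline{G(\overline{\eta_I(z)})}J\big]=(A_{\xi,\eta}f_I)(z),$$
which is exactly \eqref{anti-liear}. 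The slice condition $\eta_I(\mc)\subset\mc$ is what guarantees that the inner composition lands in $\mc$ so that $\mathcal{C}$ may be applied slicewise.

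Next I would exploit that $\mathcal{C}$ is a \emph{conjugation} on $\mf$: as verified right after \eqref{antic}, it is an involutive ($\mathcal{C}^2=I$), isometric, hence bijective, map. Consequently, for every $f\in\mf$,
$$\|A_{\xi,\eta}f\|=\|W_{\xi,\eta}(\mathcal{C}f)\|,$$
and since $g\mapsto\mathcal{C}g$ maps the unit sphere of $\mf$ bijectively onto itself, taking suprema yields $\|A_{\xi,\eta}\|=\|W_{\xi,\eta}\|$. In particular, the right-anti-linear operator $A_{\xi,\eta}$ is bounded on $\mf$ if and only if the right-linear operator $W_{\xi,\eta}$ is bounded on $\mf$.

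The final step applies Theorem \ref{thm bou} to $W_{\xi,\eta}$: its boundedness is equivalent to $\xi\in\mf$ together with $\hat{M}(\xi,\eta)<\infty$ and $\eta(p)=pa+b$ for some $a\in\mc$ with $|a|\leq 1$. Observe that the quantity $\hat{M}(\xi,\eta)$ in the present statement is literally $\hat{M}(f,\varphi)$ of \eqref{Mf1} with $(f,\varphi)$ replaced by $(\xi,\eta)$, so conditions (1) and (2) and the affine form of $\eta$ transfer verbatim, settling both directions simultaneously.

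The only real obstacle is the careful verification of the identity $A_{\xi,\eta}=W_{\xi,\eta}\mathcal{C}$: one must track how $\mathcal{C}$ acts on the two holomorphic components $F,G$ and confirm that conjugating first and then composing with $\eta$ inside the $\star$-product reproduces \eqref{anti-liear} exactly. Once this is in place, the isometric and involutive properties of $\mathcal{C}$ make the reduction immediate, and none of the delicate kernel estimates of the proof of Theorem \ref{thm bou} need be repeated. Alternatively, one could mirror that proof line by line — computing $\|A_{\xi,\eta}h\|$ and the action on reproducing kernels directly — but the reduction through $\mathcal{C}$ is considerably shorter and avoids all such calculation.
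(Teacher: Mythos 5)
Your proof is correct, and it takes a genuinely different (and tighter) route than the paper. The paper does not really prove Proposition \ref{prop-anti}: it asserts the result ``compared with the arguments in Theorem \ref{thm bou}'' --- i.e.\ it invites the reader to re-run the kernel-testing and norm computations of that theorem with $(\xi,\eta)$ in place of $(f,\varphi)$ --- and then only supplies the explicit integral estimate showing $(2)\Rightarrow(1)$ for nonconstant $\eta$. Your factorization $A_{\xi,\eta}=W_{\xi,\eta}\,\mathcal{C}$ is already implicit in the paper's own formula \eqref{anti-liear}, $(A_{\xi,\eta}f_I)(z)=\xi_I(z)\star(\mathcal{C}f_I)(\eta_I(z))$, and since the paper has verified just before this point that $\mathcal{C}$ of \eqref{C} is an involutive isometry of $\mf$ onto itself, the equivalence $\|A_{\xi,\eta}\|=\|W_{\xi,\eta}\|$ (both directions following from $W_{\xi,\eta}=A_{\xi,\eta}\mathcal{C}$ and $A_{\xi,\eta}=W_{\xi,\eta}\mathcal{C}$) reduces everything to Theorem \ref{thm bou} verbatim, including the affine form of $\eta$; the quantity $\hat M(\xi,\eta)$ is indeed literally \eqref{Mf1} with the symbols renamed. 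What your approach buys is a complete, self-contained argument with no repeated estimates; what the paper's approach buys is nothing beyond familiarity, so yours is preferable. One small point worth making explicit if you write this up: $\mathcal{C}$ maps $\mf$ \emph{onto} $\mf$ (holomorphy of $z\mapsto\overline{F(\overline z)}$ plus $\mathcal{C}^2=I$), which is what lets you pass from the pointwise identity $\|A_{\xi,\eta}f\|=\|W_{\xi,\eta}(\mathcal{C}f)\|$ to equality of operator norms.
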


 Indeed, if the operator $ A_{\xi,\eta}:\;\mf \rightarrow \mf$ is bounded with \emph{non-constant function} $\eta$ such that $\eta(\mc)\subset\mc$, then it yields that
 \begin{eqnarray*}\|\xi\|^2&=&\int_{\mc}(|F(z)|^2+|G(z)|^2)e^{-|z|^2}d\sigma(x,y) \nonumber\\&=&\int_{\mc}|F(z)|^2e^{-|z|^2}d\sigma(x,y) +\int_{\mc}|G(\overline{z})|^2e^{-|z|^2}d\sigma(x,y) \nonumber\\&=& \int_{\mc}|F(z)|^2e^{|\eta_I(z)|^2-|z|^2}e^{-|\eta_I(z)|^2}d\sigma(x,y) \nonumber\\&&+\int_{\mc}|G(\overline{z})|^2e^{|\eta_I(z)|^2-|z|^2}e^{-|\eta_I(z)|^2}d\sigma(x,y)
 \nonumber\\&\leq& \hat{M}(\xi,\eta) \int_{\mc}e^{-|az+b|^2}d\sigma(x,y)\nonumber\\ &=&
 \frac{1}{|a|^2}\hat{M}(\xi,\eta) \int_{\mc}e^{-|z|^2}d\sigma(x,y)<+\infty. \end{eqnarray*} That is to say the statements $(2)\Rightarrow (1)$ in Proposition \ref{prop-anti} if $\eta$ is non-constant function with $0<|a|\leq 1$.

 Based on the above calculations, we continue  exploring the conditions ensuring $A_{\xi,\eta}$ is an isometry  on $\mf.$ In what follows, we always assume that \emph{$\xi$ and $\eta$ are two slice regular functions, where nonconstant function $\eta$  satisfies $\eta(\mc)\subset \mc$ for some $I\in \ms,$ and   $\hat{M}(\xi,\eta)<\infty $ in Proposition \ref{prop-anti}.} The proposition below indicates some special forms for the functions $\xi$ and $\eta$ when $ A_{\xi,\eta}:\;\mf\rightarrow \mf$ is an isometry.
\begin{prop} \label{prop iso}  The  right-anti-linear  weighted composition operator $A_{\xi,\eta} $ is isometric on $\mf$ if and only if the functions $\xi$ and $\eta$ are of the following forms,
\begin{eqnarray}  \xi_I(z)&=&(c+dJ)\star e^{-a\overline{b}z}=ce^{-a\overline{b}z}+de^{-\overline{a}bz} J, \nonumber\\ \eta_I(z)&=&za+b, \label{xi1}\end{eqnarray} where $a, b, c, d \in \mc$ satisfying \begin{eqnarray} |a|=1,\;\;(|c|^2+|d|^2) e^{|b|^2}=1.\label{abcd}\end{eqnarray}\end{prop}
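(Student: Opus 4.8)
The plan is to reduce this anti-linear statement to the linear isometry theorem already proved. The starting observation is that the operator $A_{\xi,\eta}$ of \eqref{anti-liear} factors through the conjugation $\mathcal{C}$ of \eqref{C}: comparing the two definitions directly gives $A_{\xi,\eta}=W_{\xi,\eta}\,\mathcal{C}$ on $\mf$, since both send $f_I=F+GJ$ to $\xi_I\star[\overline{F(\overline{\eta_I})}+\overline{G(\overline{\eta_I})}J]$. Because $\mathcal{C}$ is a conjugation (an isometric, surjective involution), $A_{\xi,\eta}$ is isometric if and only if $W_{\xi,\eta}$ is: from $\|A_{\xi,\eta}f\|=\|W_{\xi,\eta}\mathcal{C}f\|$ and $\|\mathcal{C}f\|=\|f\|$, the isometry of $A_{\xi,\eta}$ is equivalent to $\|W_{\xi,\eta}g\|=\|g\|$ for all $g=\mathcal{C}f$, and $g$ runs over all of $\mf$. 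The standing hypothesis $\hat{M}(\xi,\eta)<\infty$ is precisely the boundedness condition \eqref{Mf1}, so $W_{\xi,\eta}$ is a bounded weighted composition operator to which Theorem \ref{iso} applies.

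First I would prove necessity. Assuming $A_{\xi,\eta}$ isometric, the equivalence above makes $W_{\xi,\eta}$ isometric, so Theorem \ref{iso} forces $\eta(p)=pa+b$ with $|a|=1$ (the bound $|a|\le1$ is from Proposition \ref{prop-anti}, and equality is forced exactly as in Proposition \ref{prop pla}) and pins down the slice components of $\xi$ up to two $\mc$-constants. Writing $\xi_I=F+GJ$ and unwinding the normalized kernels $k_b(z)=e^{z\overline{b}-|b|^2/2}$ appearing in Theorem \ref{iso}, one reads off $F(z)=\alpha e^{-a\overline{b}z-|b|^2/2}$ and $G(z)=\beta e^{-\overline{a}bz-|b|^2/2}$ with $|\alpha|^2+|\beta|^2=1$. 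Setting $c=\alpha e^{-|b|^2/2}$ and $d=\beta e^{-|b|^2/2}$ gives exactly \eqref{xi1}, while $|\alpha|^2+|\beta|^2=1$ becomes $(|c|^2+|d|^2)e^{|b|^2}=1$, which is \eqref{abcd}. The identity $ce^{-a\overline{b}z}+de^{-\overline{a}bz}J=(c+dJ)\star e^{-a\overline{b}z}$ is a one-line check from the product rule \eqref{star} together with \eqref{starH}.

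For sufficiency I would run the implication backwards: given $\xi,\eta$ as in \eqref{xi1}--\eqref{abcd}, put $\alpha=ce^{|b|^2/2}$, $\beta=de^{|b|^2/2}$ and $\lambda=a$, so that $|\alpha|^2+|\beta|^2=1$ and $\xi,\eta$ match the isometric form of Theorem \ref{iso}; hence $W_{\xi,\eta}$ is isometric and therefore so is $A_{\xi,\eta}=W_{\xi,\eta}\mathcal{C}$. (Alternatively one can verify $\|A_{\xi,\eta}f\|=\|f\|$ directly, recognizing the weight $k_{-\overline{a}b}$ via Lemma \ref{lem Ab} and Proposition \ref{prop uni}, but the factorization is cleaner.) The genuinely delicate step here is not conceptual but clerical: the exact matching of constants and exponents --- tracking the conjugations, the distinction between the weight $\xi$ itself and the $k^2$-weight produced after composing with $W_{-b}$ in the statement of Theorem \ref{iso}, and the normalizing factors $e^{\pm|b|^2/2}$ --- is where a sign or a factor is easiest to lose, so I would carry out that bookkeeping explicitly rather than by analogy.
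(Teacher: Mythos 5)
Your factorization $A_{\xi,\eta}=W_{\xi,\eta}\,\mathcal{C}$ is exactly the definitional identity \eqref{anti-liear}, and since $\mathcal{C}$ is a surjective isometry of $\mf$, the equivalence ``$A_{\xi,\eta}$ isometric $\Leftrightarrow$ $W_{\xi,\eta}$ isometric'' is sound; the rest of your argument then goes through, so the proof is correct. It is, however, a genuinely different reduction from the paper's. The paper does not factor out $\mathcal{C}$; instead it composes on the right with the anti-linear operator $A_{-\overline{b}}$ (shown isometric in Lemma \ref{lem Ab}), computes $A_{\xi,\eta}A_{-\overline{b}}=W_{u,v}$ with $u=\xi\star k_b(\eta(\cdot))$ and $v(p)=pa$, and then invokes Proposition \ref{prop pla} (the special case $\eta(p)=p\lambda$, which forces $|a|=1$ and $u$ constant of modulus one) before unwinding $u$ to recover $\xi$. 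Your route leans on the full Theorem \ref{iso} instead of Proposition \ref{prop pla}; it is cleaner in that it avoids introducing $A_{-\overline{b}}$ and the auxiliary weight $u$, at the cost of depending on the heavier theorem. One caution: the displayed statement of Theorem \ref{iso} carries the weight $k^2_{-\overline{\lambda}b}$, which is inconsistent with both its own sufficiency part and with the weight $f_I=(\alpha+\beta J)\star k_{-\overline{\lambda}b}$ actually derived in its necessity proof (since $\hat f_I$ there is the constant $\alpha+\beta J$, not $(\alpha+\beta J)\star k_{-\overline{\lambda}b}$); your constants $c=\alpha e^{-|b|^2/2}$, $d=\beta e^{-|b|^2/2}$ correspond to the single-$k$ reading, which is the correct one and the one that matches \eqref{xi1}--\eqref{abcd}, so you should cite the form of the weight from the proof of Theorem \ref{iso} rather than its displayed formula. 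With that bookkeeping made explicit, as you propose to do, the argument is complete.
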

\pf \emph{Necessity.} Suppose the operator $ A_{\xi,\eta}:\;\mf\rightarrow \mf$ is isometric. By Proposition \ref{prop-anti}, it yields that $\eta(p)=p a+b$ for some constants $a$ and $b$ with $|a|\leq 1,$ $a, b\in \mc.$  For every $f\in \mf,$ we deduce that
\begin{eqnarray*}A_{-\overline{b}}f_I(z)&=&k_{\overline{b}}(z)\star [\overline{F(\overline{z}-b)}+\overline{G(\overline{z}-\overline{b})}J]\\&
=&k_{\overline{b}}(z)\overline{F(\overline{z}-b)}
+k_{\overline{b}}(z)\overline{G(\overline{z}-b)}J.\end{eqnarray*} Furthermore, it entails that
\begin{eqnarray*} &&A_{\xi,\eta} A_{-\overline{b}} f_I(z)\nonumber\\&=& \xi_I(z)\star [\overline{k_{\overline{b}}(\overline{\eta_I(z)})}\overline{\overline{F(\eta_I(z)-b)}}
+\overline{k_{\overline{b}}(\overline{\eta_I(z)})}\overline{\overline{G(\eta_I(z)-b)}}J] \nonumber\\&=&\xi_I(z)\star[k_{\overline{\eta_I(z)}}(\overline{b})F(\eta_I(z)-b)+
k_{\overline{\eta_I(z)}}(\overline{b})G(\eta_I(z)-b)J]\nonumber\\&
=&\xi_I(z)\star[k_{ b}(\eta_I(z))F(az)+k_{ b}(\eta_I(z))
 G(az)J]\nonumber\\&=&\xi_I(z)\star k_{ b}(\eta_I(z))\star f_I(az). \end{eqnarray*}
Setting $u(p)=\xi(p)\star k_b(\eta (p))$ and $v(p)=pa,$ we can define a linear weighted composition operator on $W_{u,v}$ on $\mf$ defined by $$W_{u,v} h(p)=u(p)\star [h(v(p))]$$ and then we see that
\[A_{\xi,\eta} A_{-\overline{b}}  =W_{u,v}.\]  On  the other hand, for every $f\in \mf,$ Lemma \ref{lem Ab} and the isometric operator  $ A_{\xi,\eta}:\;\mf\rightarrow \mf$  entail that
\begin{eqnarray}\|A_{\xi,\eta}A_{-\overline{b}} f\|=\|A_{-\overline{b}} f\|=\|f\|.\label{xietab}\end{eqnarray} That is to say  $W_{u,v}=A_{\xi,\eta} A_{-\overline{b}}$ is isometric. Denote $\xi_I(z)=X(z)+Y(z)J$ for $z\in \mc$, and immediately \begin{eqnarray*}u_I(z)&=&\xi_I(z)\star k_b(\eta_I(z))\\&=&(X(z)+Y(z)J)\star k_b(\eta_I(z))\nonumber\\&=&X(z)k_b(\eta_I(z)) + Y(z)\overline{k_b(\eta_I(\overline{z}))}J\nonumber\\&=& X(z)k_b(\eta_I(z)) + Y(z)\overline{k_b(\eta_I(\overline{z}))}J\end{eqnarray*} Employing Proposition \ref{prop pla}, it entails that $W_{u,v}$ is an isometry on $\mf$, then  $|a|=1$ and $u_I(z)$ satisfying  $\varsigma:=X(z)k_b(\eta_I(z))$ and $\chi:=Y(z)\overline{k_{b}(\eta_I(\overline{z}))}$ are constants and $|\varsigma|^2+|\chi|^2=1.$ Hence
\begin{eqnarray*} &&X(z)= \varsigma e^{-|b|^2/2} e^{-a\overline{b}z}\;\;\mbox{and}\;\; Y(z)=\chi e^{-|b|^2/2} e^{-\overline{a}bz}.\end{eqnarray*} Setting  $c:=\varsigma e^{-|b|^2/2} $ and $d:=\chi e^{-|b|^2/2} $, and then $X(z)= ce^{-a\overline{b}z}$,  $Y(z)=de^{-\overline{a}bz}$  and $(|c|^2+|d|^2) e^{|b|^2}=1$. That is,  the displays \eqref{xi1} and \eqref{abcd} are true.

\emph{Sufficiency.}  Assume $\xi_I(z)=ce^{-a\overline{b}z}+de^{-\overline{a}bz} J, \;\;\eta_I(z)=za+b,$ satisfying \eqref{abcd}, we need to show $A_{\xi,\eta} $ is isometric on $\mf$. For any $f\in \mf,$ the conditions yield the display \eqref{xietab} holds.  Lemma \ref{lem Ab} ensures that there exists some $g\in\mf$ such that $$g_I(z)=A_{-\overline{b}}^{-1} f_I(z)$$  and $\|g\|=\|f\|$, where $A_{-\overline{b}}^{-1} $ is the inverse mapping of $A_{-\overline{b}}.$  It follows that $A_{\xi,\eta}=W_{u,v} A_{-\overline{b}}^{-1}.$ Hence we obtain
\begin{eqnarray} \|A_{\xi,\eta} f\|=\|W_{u,v} A_{-\overline{b}}^{-1} f\|=\|W_{u,v} g\|=\|g\|=\|f\|, \end{eqnarray}
 which implies $A_{\xi,\eta} $ is isometric on $\mf$. This ends the proof.\zb

Now we proceed to investigate the equivalent forms for an involution of $A_{\xi,\eta}$ on $\mf.$ Comparing the blow results with Proposition \ref{prop iso}, it implies an involutive $A_{\xi,\eta}$ on $\mf$ must  be an isometry.

\begin{prop} \label{prop invo} The right-anti-linear  weighted composition  operator $A_{\xi,\eta} $ is involutive on $\mf$ if and only if the functions $\xi$ and $\eta$ are of the following forms,
\begin{eqnarray}  \xi_I(z)&=&ce^{-a\overline{b}z}+de^{-\overline{a}bz} J=(c+dJ)\star e^{-a\overline{b}z}, \nonumber\\ \eta_I(z)&=&za+b, \label{xi11}\end{eqnarray} where $a, b, c, d \in \mc$ satisfying \begin{eqnarray} |a|=1,\;\;(|c|^2+|d|^2) e^{|b|^2}=1\;\;\mbox{and}\;\;\overline{a}b+\overline{b}=0.\label{abcd+}\end{eqnarray}\end{prop}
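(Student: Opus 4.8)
\emph{Necessity.} Throughout I keep the standing hypotheses, so by Proposition \ref{prop-anti} we already know $\eta_I(z)=za+b$ with $0<|a|\le 1$ and $\hat M(\xi,\eta)<\infty$; only the three relations in \eqref{abcd+} and the Gaussian shape of $\xi$ remain to be extracted from $A_{\xi,\eta}^2=I$. The first idea is to use that $A_{\xi,\eta}$ is right-anti-linear, so that $A_{\xi,\eta}^2$ is right-\emph{linear} and can be written as an ordinary $\star$-weighted composition operator; the only tools needed to unwind it are the multiplicativity $\mathcal{C}(u\star v)=\mathcal{C}u\star\mathcal{C}v$ from \eqref{CC}, the relation $\mathcal{C}^2=I$, and the elementary fact that $\mathcal{C}$ composed with the affine $\eta$ conjugates both the variable and the coefficients of $\eta$. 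A convenient probe is the constant $1$: since $\mathcal{C}1=1$ and $1\circ\eta=1$ one gets $A_{\xi,\eta}1=\xi$, whence $A_{\xi,\eta}^2=I$ is equivalent, on this vector, to the scalar identity $\xi_I(z)\star(\mathcal{C}\xi)_I(\eta_I(z))=1$.

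Probing next with $h_I(z)=z$, I would read off the inner affine map of $A_{\xi,\eta}^2$: a direct application of \eqref{star} shows that $h$ gets evaluated at $|a|^2z+(\overline{a}b+\overline{b})$, so the composition symbol of $A_{\xi,\eta}^2$ is $z\mapsto |a|^2z+(\overline{a}b+\overline{b})$. Forcing this to be the identity gives simultaneously $|a|=1$ and $\overline{a}b+\overline{b}=0$, the new relation. With $|a|=1$ in hand, the boundedness bound $\hat M(\xi,\eta)<\infty$ constrains $\xi$ exactly as in Propositions \ref{prop lambda1} and \ref{prop iso}: $F(z)=ce^{-a\overline{b} z}$ and $G(z)=de^{-\overline{a} b z}$, i.e. $\xi_I(z)=(c+dJ)\star e^{-a\overline{b} z}$. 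Feeding this Gaussian form back into $\xi_I(z)\star(\mathcal{C}\xi)_I(\eta_I(z))=1$ and simplifying with $\overline{a}b+\overline{b}=0$ (which kills the $z$-dependent exponential factors) collapses the weight to the single normalization $(|c|^2+|d|^2)e^{|b|^2}=1$. This yields all of \eqref{xi11} and \eqref{abcd+}.

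\emph{Sufficiency.} Conversely, assuming \eqref{xi11} and \eqref{abcd+}, the cleanest route is to note that these forms are the isometric forms \eqref{xi1}--\eqref{abcd} supplemented by $\overline{a}b+\overline{b}=0$; hence Proposition \ref{prop iso} already guarantees that $A_{\xi,\eta}$ is an isometry, and it remains only to verify the involutive identity $A_{\xi,\eta}^2=I$. For that I would substitute the explicit $\xi$ and $\eta$ into $(A_{\xi,\eta}^2 h)_I(z)=\xi_I(z)\star\big(\mathcal{C}[\xi_I\star((\mathcal{C}h)\circ\eta)]\big)(\eta_I(z))$ and simplify using $\mathcal{C}^2=I$, \eqref{CC} and the three relations, checking that the weight collapses to $1$ and the composition symbol to the identity, so that $A_{\xi,\eta}^2h=h$ for every $h\in\mf$.

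\emph{Main obstacle.} The delicate part is the explicit evaluation of $A_{\xi,\eta}^2$. The conjugation $\mathcal{C}$ replaces $z$ by $\overline{z}$ and conjugates coefficients, the $\star$-product \eqref{star} introduces still more occurrences of $\overline{z}$, and composition with $\eta$ does not commute with $\star$; keeping track of these variable- and coefficient-conjugations so that the spurious cross terms cancel and the operator genuinely reduces to one weight times the affine map $z\mapsto|a|^2z+(\overline{a}b+\overline{b})$ is where the real bookkeeping lies. In particular, extracting precisely the new constraint $\overline{a}b+\overline{b}=0$ from the vanishing of the first-order part of this symbol is the crux; once the forms and this relation are secured, the remaining algebra leading to $(|c|^2+|d|^2)e^{|b|^2}=1$ is routine.
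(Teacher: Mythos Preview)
Your outline matches the paper's proof almost step for step: probe $A_{\xi,\eta}^2=I$ with $h=1$ to get the weight identity $\xi_I\star(\mathcal{C}\xi_I)\circ\eta_I=1$, probe with $h_I(z)=z$ to read off the inner symbol $z\mapsto|a|^2z+(\overline{a}b+\overline{b})$ and hence $|a|=1$, $\overline{a}b+\overline{b}=0$, then invoke Proposition~\ref{prop lambda1} to force the Gaussian shape of $\xi$, and finally extract the normalization from the weight identity. The sufficiency argument is likewise the same.

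One point where you are too optimistic: the relation $\overline{a}b+\overline{b}=0$ does \emph{not} by itself kill the $z$-dependent exponentials in $\xi_I(z)\star(\mathcal{C}\xi_I)(\eta_I(z))=1$. After rewriting $\xi_I(z)=ce^{bz}+de^{\overline{b}z}J$, the $\star$-product produces a scalar part $|c|^2e^{|b|^2}-d^2e^{\overline{b}(1+\overline{a})z+\overline{b}^2}$ and a $J$-part $c\overline{d}\,e^{b(1+a)z+b^2}+cd\,e^{|b|^2}$; the exponents $\overline{b}(1+\overline{a})$ and $b(1+a)$ equal $\pm 2i\,\overline{a}\,\mathrm{Im}(b)$ and do not vanish in general. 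The paper obtains $(|c|^2+|d|^2)e^{|b|^2}=1$ by combining the two components: in the case $c\neq 0$ one conjugates the $J$-equation, substitutes it into the scalar equation, and the $z$-dependence cancels between the two; the case $c=0$ is handled separately. So the step you label ``routine'' needs this two-equation argument rather than a direct simplification.
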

\pf \emph{Necessity}. Since $ A_{\xi,\eta}:\mf\rightarrow \mf$ is an involutive, then $\eta(p)=pa+b$ with $0<|a|\leq 1$ and $a, b\in \mc.$  Furthermore, it holds $$A_{\xi,\eta}A_{\xi,\eta} f=f$$ for all $f\in \mf.$ That is to say
\begin{eqnarray} \xi_I(z)\star (\mathcal{C}\xi_I)( \eta_I(z))\star (\mathcal{C}[(\mathcal{C}f_I)\circ \eta_I])(\eta_I(z)) =f_I(z)\;\;\label{AXI}\end{eqnarray} $\mbox{for any}\; f\in \mf.$ In particular, letting $f_I(z)=1,$ we obtain that
\begin{eqnarray} \xi_I(z)\star (\mathcal{C}\xi_I)(\eta_I(z))=1.\label{xi} \end{eqnarray}
The above inequality reduces \eqref{AXI} into\begin{eqnarray} \mathcal{C}[(\mathcal{C}f_I)\circ \eta_I])(\eta_I(z)) =f_I(z)\label{ff}\end{eqnarray} for any $f\in \mf.$ Now let $f_I(z)=z$ in \eqref{ff}, we obtain that
\begin{eqnarray*} (\mathcal{C}\eta_I)(\eta_I(z))=\overline{a(\overline{az+b})+b}=z,\end{eqnarray*}
which entails that  $|a|^2=1$ and $\overline{a}b+\overline{b}=0.$  Since $|a|=1$ and the operator $ A_{\xi,\eta}:\;\mf\rightarrow \mf$ is bounded, Proposition \ref{prop lambda1} implies $\xi_I(z)=ce^{-a\overline{b}z}+d e^{-\overline{a}bz}J.$ Due to $-\overline{a}b=\overline{b}$ and $-a\overline{b}=b,$ it makes sense that
$\xi_I(z)=ce^{bz}+de^{\overline{b}z}J.$ And then \[\mathcal{C} \xi_I(z)=\overline{c}e^{\overline{b}z}+\overline{d}e^{bz}J.\] Moreover, it yields that  \begin{eqnarray} (\mathcal{C} \xi_I)(\eta_I(z))&=&\overline{c}e^{\overline{b}(az+b)}+\overline{d}e^{b(az+b)}J
\nonumber\\&=&\overline{c}e^{-bz+|b|^2}+\overline{d}e^{baz+b^2}J.\label{xieta}\end{eqnarray}
Combining \eqref{xi} with \eqref{xieta}, we deduce that
\begin{eqnarray*} 1&=&[ce^{bz}+d e^{\overline{b}z}J]\star [\overline{c}e^{-bz+|b|^2}+\overline{d}e^{baz+b^2}J]\nonumber\\&=& |c|^2e^{|b|^2}-d e^{\overline{b}z} d e^{\overline{ba}z+\overline{b}^2} +[ce^{bz}\overline{d}e^{baz+b^2}+d e^{\overline{b}z}ce^{-\overline{b}z+|b|^2}]J\nonumber\\&=&[ |c|^2e^{|b|^2}-d e^{\overline{b}z} d e^{\overline{ba}z+\overline{b}^2}] +[c\overline{d} e^{bz+baz+b^2}+dc e^{ |b|^2}]J,\end{eqnarray*}
which implies that
\begin{eqnarray} &&|c|^2e^{|b|^2}-d e^{\overline{b}z} d e^{\overline{ba}z+\overline{b}^2}=1,\label{=1}\\&&c\overline{d} e^{bz+baz+b^2}+dc e^{ |b|^2}=0.\label{=0}  \end{eqnarray}  In view of the fact $\eta$ is nonconstant  function, hence Proposition \ref{prop lambda1} ensures  at least one of $c$ and $d$  is not zero. We assume $c\neq 0,$ and then the display \eqref{=0} implies
\[ \overline{d} e^{bz+baz+b^2}=-d e^{ |b|^2}.\] Furthermore, it's also true that
\[  d  e^{\overline{b}\overline{z}+\overline{ba}\overline{z}+\overline{b}^2}=-\overline{d} e^{ |b|^2}.\] Now we let $w=\overline{z},$ it validates that
\begin{eqnarray}  d  e^{\overline{b}w+\overline{ba}w+\overline{b}^2}=-\overline{d} e^{ |b|^2}.\label{dd}\end{eqnarray} Putting \eqref{dd} into \eqref{=1}, it follows that
\[|c|^2e^{|b|^2}+|d|^2 e^{|b|^2}=1,\] which gives the condition in \eqref{abcd+}. On the other side, if $c=0,$ then $d$ must not zero. The display \eqref{=1} implies that \[d^2 e^{\overline{b}z+\overline{ba}z+\overline{b}^2}=-1.\] Thus $\overline{b}+\overline{ba}=0$ and $d^2 e^{\overline{b}^2}=-1.$  Combining $\overline{a}b=-\overline{b}$, it turns out that $\overline{a}(b-\overline{b})=0$ and  $b\in \mathbb{R}.$ Hence $d^2= -e^{-b^2}$ and \[(|c|^2+|d|^2)e^{|b|^2}=0+e^{-b^2} e^{|b|^2}=1.\] That is to say the conditions \eqref{xi11} and \eqref{abcd+} are satisfied.

\emph{ Sufficiency}. Suppose   the functions $\xi$ and $\eta$ own the forms in   \eqref{xi11} and \eqref{abcd+}. Then we deduce that \[ \xi_I(z)\star (\mathcal{C}\xi_I)(\eta_I(z))=1\] and  for $f_I(z)=U(z)+V(z)J,$ we conclude
\begin{eqnarray*}  (\mathcal{C}f_I)\circ \eta_I(z) = \overline{U\left(\overline{az+b}\right)}+\overline{V\left(\overline{az+b}\right)}J.\end{eqnarray*} Furthermore, employing  $|a|=1$ \;and\;$\overline{a}b+\overline{b}=0$, it  turns out that
\begin{eqnarray*} &&\mathcal{C}[(\mathcal{C}f_I)\circ \eta_I])(\eta_I(z)) \\&=& U\left(\overline{a\overline{\eta_I(z)}+b}\right)+ V\left(\overline{a\overline{\eta_I(z)}+b}\right)J\nonumber\\&=&
U\left(\overline{a}(az+b)+\overline{b}\right)+ V\left(\overline{a}(az+b)+\overline{b}\right)J
\nonumber\\&=&U(z)+V(z)J
\\&=&f_I(z).\end{eqnarray*} This ensures
 the formula \eqref{ff}, which shows $A_{\xi,\eta} $ is involutive on $\mf$. The proof is completed.
 \zb

 Particularly, \cite[Remark 3.10]{Le} indicates  there are infinitely many tuples $\{a, b, c, d=0\}$, for $|a|=|c|^2 e^{|b|^2}=1$ holds, but $a\overline{b}+b\neq 0$ is not satisfied. Thus combining   Proposition \ref{prop iso} with Proposition \ref{prop invo},  we can summarize a corollary as below.

\begin{cor} Suppose the right-anti-linear  weighted composition operator $ A_{\xi,\eta}:\;\mf\rightarrow \mf$ is bounded. If $A_{\xi,\eta} $ is involutive on $\mf$  then it is isometric. However,  an isometric operator  $A_{\xi,\eta} $  may not be involutive on $\mf$.  \end{cor}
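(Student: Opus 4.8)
The plan is to read both assertions directly off the two characterizations just proved, namely Proposition \ref{prop iso} for isometry and Proposition \ref{prop invo} for involution, since each describes the admissible tuples $(a,b,c,d)$ explicitly and the two descriptions differ only by one constraint. No new computation is really needed; the corollary is a comparison statement between these two lists of conditions.

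First I would dispatch the implication \emph{involutive $\Rightarrow$ isometric}. Inspecting the conclusions of the two propositions, the involution conditions \eqref{xi11}--\eqref{abcd+} are exactly the isometry conditions \eqref{xi1}--\eqref{abcd} augmented by the single extra requirement $\overline{a}b+\overline{b}=0$. Consequently, if $A_{\xi,\eta}$ is involutive, then the defining forms $\xi_I(z)=ce^{-a\overline{b}z}+de^{-\overline{a}bz}J$ and $\eta_I(z)=za+b$ hold with $|a|=1$ and $(|c|^2+|d|^2)e^{|b|^2}=1$, and this is precisely the hypothesis under which Proposition \ref{prop iso} concludes that $A_{\xi,\eta}$ is an isometry. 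Thus involutivity yields isometry with nothing further to check.

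For the converse failure I would exhibit a concrete counterexample certifying that the constraint $\overline{a}b+\overline{b}=0$ is genuinely restrictive. Passing to the complex conjugate in the commutative slice $\mc$ shows that $\overline{a}b+\overline{b}=0$ is equivalent to $a\overline{b}+b=0$. By \cite[Remark 3.10]{Le} there exist (infinitely many) tuples $\{a,b,c,d=0\}$ with $|a|=1$ and $|c|^2e^{|b|^2}=1$ for which $a\overline{b}+b\neq 0$. For any such tuple the functions $\xi_I(z)=ce^{-a\overline{b}z}$ and $\eta_I(z)=za+b$ satisfy the isometry criterion \eqref{xi1}--\eqref{abcd}, so the associated operator $A_{\xi,\eta}$ is isometric by Proposition \ref{prop iso}; but since $a\overline{b}+b\neq 0$ the involution constraint in \eqref{abcd+} is violated, whence Proposition \ref{prop invo} forbids $A_{\xi,\eta}$ from being involutive. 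This furnishes an isometric operator that is not involutive and completes the proof. The only point requiring a moment's care---the mild obstacle---is verifying that the involution constraint $\overline{a}b+\overline{b}=0$ and the expression $a\overline{b}+b\neq 0$ borrowed from \cite[Remark 3.10]{Le} describe the same condition, which follows immediately upon conjugating in $\mc$.
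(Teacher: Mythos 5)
Your proposal is correct and follows essentially the same route as the paper: the paper likewise obtains the corollary by comparing Propositions \ref{prop iso} and \ref{prop invo}, noting that the involution conditions are the isometry conditions plus the single extra constraint $\overline{a}b+\overline{b}=0$, and invoking \cite[Remark 3.10]{Le} for tuples $\{a,b,c,d=0\}$ with $|a|=|c|^2e^{|b|^2}=1$ that violate it. Your added remark that $\overline{a}b+\overline{b}=0$ and $a\overline{b}+b=0$ are equivalent by conjugation in $\mc$ is a small but worthwhile clarification of a point the paper glosses over.
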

 On account of Propositions  \ref{prop iso} and \ref{prop invo}   together with Definition \ref{conjugation}, we deduce a critical characterization for the operator $A_{\xi,\eta}$ is right-conjugation on $\mf$.

 \begin{thm} \label{thm conjugation}Let $\xi\in \mathcal{R}(\hh)$  and $\eta\in \mathcal{R}(\hh)$    nonconstant  satisfying $\eta (\mc)\subset \mc$ for some $I\in \ms.$  Suppose that $ A_{\xi,\eta}:\;\mf\rightarrow \mf$ is bounded, then $A_{\xi,\eta} $ is  a right-conjugation on $\mf$ if and only if the functions $\xi$ and $\eta$ are of the following forms,
 \begin{eqnarray*}  \xi_I(z)=(c+dJ)\star e^{-a\overline{b}z}=ce^{-a\overline{b}z}+de^{-\overline{a}bz} J, \;\;\eta_I(z)=za+b,  \end{eqnarray*} where $a, b, c \in \mc$ satisfying \begin{eqnarray*}|a|=1,\;\;(|c|^2+|d|^2) e^{|b|^2}=1\;\;\mbox{and}\;\;\overline{a}b+\overline{b}=0. \end{eqnarray*}\end{thm}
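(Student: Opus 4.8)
The plan is to obtain this theorem as a direct synthesis of Propositions \ref{prop iso} and \ref{prop invo} by unpacking Definition \ref{conjugation}. Recall that, by Definition \ref{conjugation}, a right-anti-linear mapping is a right-conjugation precisely when it is simultaneously \emph{involutive} and \emph{isometric}. The right-anti-linearity of $A_{\xi,\eta}$ has already been verified in the computation preceding Proposition \ref{prop-anti}, so this clause requires no further work. Consequently the whole content of the theorem reduces to characterizing when the bounded operator $A_{\xi,\eta}$ is at once involutive and isometric on $\mf$.

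First I would invoke the preceding corollary, which asserts that under the standing boundedness hypothesis an involutive $A_{\xi,\eta}$ is automatically isometric. This is the crucial observation: it collapses the two defining requirements into the single requirement of involution, so that $A_{\xi,\eta}$ is a right-conjugation if and only if it is involutive. For the necessity direction I would then argue that a right-conjugation is in particular involutive, whence Proposition \ref{prop invo} yields exactly the stated forms $\xi_I(z)=(c+dJ)\star e^{-a\overline{b}z}$ and $\eta_I(z)=za+b$ together with the constraints \eqref{abcd+}, namely $|a|=1$, $(|c|^2+|d|^2)e^{|b|^2}=1$ and $\overline{a}b+\overline{b}=0$. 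For the sufficiency direction, assuming $\xi$ and $\eta$ have those forms, Proposition \ref{prop invo} guarantees that $A_{\xi,\eta}$ is involutive, the corollary (or equivalently Proposition \ref{prop iso}) guarantees it is isometric, and the already-established right-anti-linearity then completes all three clauses of Definition \ref{conjugation}, so $A_{\xi,\eta}$ is a right-conjugation.

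The only subtlety to record is an internal consistency check: the constraint list \eqref{abcd+} characterizing involution genuinely contains the list characterizing isometry in Proposition \ref{prop iso}, the extra condition being exactly $\overline{a}b+\overline{b}=0$; this is what makes the two requirements compatible and ensures no further restriction is silently imposed. Since both propositions and the corollary have already been proved, no new analytic estimate is needed, and the argument is essentially a short bookkeeping assembly of Definition \ref{conjugation} with Propositions \ref{prop iso} and \ref{prop invo}; the main (and indeed only) obstacle is simply matching the hypotheses correctly rather than any genuine computation.
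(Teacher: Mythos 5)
Your proposal is correct and follows essentially the same route as the paper, which likewise obtains the theorem by combining Definition \ref{conjugation} with Propositions \ref{prop iso} and \ref{prop invo} (the involution conditions \eqref{abcd+} subsuming the isometry conditions). No gap to report.
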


 Observing the condition $\overline{a}b+\overline{b}=0$ in Theorem \ref{thm conjugation}, we can rewrite the formula of $\xi$ as \[\xi_I(z)=ce^{bz}+de^{\overline{b}z}J.\]  Hence we can offer a  ultimate   characterization for our right-conjugation presented in Theorem \ref{thm conjugation}.The right-conjugation on $\mf$ will be denoted by $\mathcal{C}_{a,b,c,d}$ defined as \begin{eqnarray}(\mathcal{C}_{a,b,c,d} f_I)(z)=[ce^{bz}+de^{\overline{b}z}J]\star (\mathcal{C} f_I)(az+b),\label{cabcd}\end{eqnarray}where $a, b, c, d$ are constants satisfying \eqref{abcd+}.

  Particularly, we let $d=0$ in \eqref{cabcd} and the condition \eqref{abcd+}. We denote $\mathcal{C}_{a,b,c}=\mathcal{C}_{a,b,c,0}$.  The mapping
 $\mathcal{C}_{a,b,c}$ is a \emph{conjugation}. Immediately, we show there are many weighted composition operator commuting with $\mathcal{C}_{a,b,c}$.
 \begin{thm} \label{thm A} The weighted composition operator $W_{f,\varphi}$ is bounded on $\mf$ with noncostant $f_I(z)=F(z)+G(z)J$ and $\varphi(p)=pA+B$ satisfying $|A|\leq 1,\;A, B\in \mc$ for some $I\in \ms.$ Then $W_{f,\varphi}$ is  a  conjugate $\mathcal{C}_{a,b,c}-$commuting operator if and only if the following conditions hold:
 \begin{eqnarray} &&a(Az+B)+b=\overline{A}(az+b)+\overline{B}\nonumber\\&&
 \overline{a}(\overline{A}z+\overline{B})+\overline{b}=A(az+b)+B,\quad \quad \quad \label{AB}\\&&
 F(z)e^{b(Az+B)}=e^{bz}\overline{F(\overline{az+b})}\nonumber\\
 &&G(z)e^{\overline{b}(\overline{A}z+\overline{B})}=e^{bz}\overline{G(\overline{az+b})}
 \label{FGGF}.\end{eqnarray}
 \end{thm}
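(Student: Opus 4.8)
The plan is to mimic the proof of Theorem \ref{thm C-}, replacing the elementary conjugation $\mathcal{C}$ by the weighted conjugation $\mathcal{C}_{a,b,c}$ from \eqref{cabcd} (the case $d=0$), and to read off the stated conditions by evaluating the commutation identity $\mathcal{C}_{a,b,c}W_{f,\varphi}h=W_{f,\varphi}\mathcal{C}_{a,b,c}h$ (Definition \ref{complex-c}) on a small set of test functions. Throughout I would keep every object in its slice restriction on $\mc$ and expand each $\star$-product by \eqref{star}, remembering \eqref{starH} and the fact that $ce^{bz}$ is $\mc$-valued, so that left-$\star$-multiplication by it acts diagonally on the $F$- and $J$-components. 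Here one records first the explicit form $(\mathcal{C}_{a,b,c}f_I)(z)=ce^{bz}\overline{F(\overline{az+b})}+ce^{bz}\overline{G(\overline{az+b})}J$.

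For necessity I would test the identity on $h=1$. On one side $W_{f,\varphi}1=f$, so the right-hand side is $\mathcal{C}_{a,b,c}f$; on the other side $\mathcal{C}_{a,b,c}1=ce^{bz}$ and $W_{f,\varphi}(ce^{bz})=f_I\star ce^{b(Az+B)}$, which I expand by \eqref{star}. Comparing the $\mc$- and $J$-components of the two sides and cancelling the nonzero constant $c$ yields exactly the two weight relations \eqref{FGGF}. Next I would test on $h_I(z)=z$: here $\mathcal{C}h_I$ is the identity map, so $\mathcal{C}_{a,b,c}h_I(z)=ce^{bz}(az+b)$ while $W_{f,\varphi}h_I(z)=f_I\star(Az+B)$; expanding both sides, substituting the already-established relations \eqref{FGGF}, and cancelling the common factors $ce^{bz}\overline{F(\overline{az+b})}$ (in the $\mc$-component) and $ce^{bz}\overline{G(\overline{az+b})}$ (in the $J$-component) leaves precisely the two affine identities \eqref{AB}. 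Since $f$ is nonconstant, at least one of $F,G$ is not identically zero, which legitimises these cancellations; if one of them vanishes identically, the corresponding relation in \eqref{FGGF} and \eqref{AB} holds trivially.

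For sufficiency I would assume \eqref{AB} and \eqref{FGGF} and verify the operator identity on an arbitrary $h_I(z)=H(z)+K(z)J$. The guiding idea is that \eqref{AB} says precisely that the composition symbol $\varphi_I(z)=Az+B$ and the internal twist $z\mapsto az+b$ built into $\mathcal{C}_{a,b,c}$ are interchangeable once the conjugation bar is applied, so that $(\mathcal{C}h_I)\circ\varphi_I$ evaluated through $\mathcal{C}_{a,b,c}$ matches $h_I$ evaluated through the composed symbol. Computing $W_{f,\varphi}\mathcal{C}_{a,b,c}h$ by \eqref{star}, \eqref{starH} and \eqref{CC}, then pushing the weight $f$ through by means of \eqref{FGGF} and re-ordering the two affine substitutions by means of \eqref{AB}, should transform it term by term into $\mathcal{C}_{a,b,c}W_{f,\varphi}h$, which completes the proof.

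The step I expect to be the main obstacle is precisely this sufficiency bookkeeping: three distinct substitutions act simultaneously — the composition $z\mapsto Az+B$, the conjugation-twist $z\mapsto\overline{az+b}$ inside $\mathcal{C}_{a,b,c}$, and the coefficient/argument conjugation coming from $\mathcal{C}$ and from \eqref{starH} — while the $\star$-product couples the $F$- and $G$-parts of $f$ with conjugated arguments. Aligning these so that \eqref{AB} and \eqref{FGGF} apply at exactly the right places, and simultaneously tracking the constant $c$ and the exponential factors $e^{bz}$, $e^{b(Az+B)}$, $e^{\overline{b}(\overline{A}z+\overline{B})}$, is where the computation is most error-prone; organising it by separately matching the $\mc$- and $J$-components, exactly as in Theorem \ref{thm C-}, is the way I would keep it under control.
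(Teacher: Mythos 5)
Your proposal follows essentially the same route as the paper: necessity by testing the commutation identity on $h=1$ and $h=z$, and sufficiency by expanding both sides on a general $h_I=H+KJ$ via \eqref{star} and substituting \eqref{AB} and \eqref{FGGF}. (Your pairing --- $h=1$ yielding the weight relations \eqref{FGGF} and $h=z$ yielding the affine identities \eqref{AB} --- is the natural one; the paper's closing remark states the opposite pairing, which appears to be a slip.)
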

 \pf \emph{Sufficiency.} Suppose the conditions \eqref{AB} and \eqref{FGGF} hold and we will prove \[W_{f,\varphi} \mathcal{C}_{a,b,c} h_I= \mathcal{C}_{a,b,c}W_{f,\varphi} h_I\] for any $h\in \mf.$ Without loss of generality, we denote $h_I(z)=U(z)+V(z)J$. On the one hand, it yields that
 \begin{eqnarray*} \mathcal{C}_{a,b,c} h_I(z)&=& ce^{bz} \star \left(\overline{U(\overline{az+b})}+\overline{V(\overline{az+b})}J\right)
 \nonumber\\&=&ce^{bz}\overline{U(\overline{az+b})} +
 ce^{bz}\overline{V(\overline{az+b})} J,\end{eqnarray*}
 which entails
 \begin{eqnarray*} &&W_{f,\varphi} \mathcal{C}_{a,b,c} h_I(z)\\&=& [F(z)+G(z)J]\star  \mathcal{C}_{a,b,c} h_I(zA+B)
 \nonumber\\&=&
  [F(z)+G(z)J]\star \left[ce^{b(Az+B)} \overline{U(\overline{a(Az+B)+b})}\right.\nonumber\\&&\left.+ce^{b(Az+B)} \overline{V(\overline{a(Az+B)+b})}J\right]\nonumber\\
 &=&F(z)ce^{b(Az+B)} \overline{U(\overline{a(Az+B)+b})}-G(z)\overline{c}e^{\overline{b}(\overline{A}z+\overline{B})} V(\overline{a}(\overline{A}z+\overline{B})+\overline{b})\nonumber\\&&+ \left[F(z)ce^{b(Az+B)} \overline{V(\overline{a(Az+B)+b})}+G(z) \overline{c}e^{\overline{b}(\overline{A}z+\overline{B})} U(\overline{a}(\overline{A}z+\overline{B})+\overline{b})\right]J.\end{eqnarray*}
 On the other hand, we deduce that
 \begin{eqnarray*} &&W_{f,\varphi} h_I(z)\\&=&[F(z)+G(z)J]\star [U(Az+B)+V(Az+B)J]\nonumber\\&=&F(z)U(Az+B) -G(z)\overline{V(A\overline{z}+B)} \nonumber\\&&+
 \left[F(z)V(Az+B)+G(z)\overline{U(A\overline{z}+B)}\right]J .\end{eqnarray*}
 Furthermore, we formulate
  \begin{eqnarray*}&& \mathcal{C}_{a,b,c} W_{f,\varphi} h_I(z)\\&=&ce^{bz}  \overline{F(\overline{az+b})} \;\overline{U(A(\overline{az+b})+B)} \\&& -ce^{bz}\overline{G(\overline{az+b})} V(A (az+b)+B) \nonumber\\&&+ce^{bz}
 \left[\overline{F(\overline{az+b})}\; \overline{V(A(\overline{az+b})+B)}\right.\\&&\left.+
\overline{ G(\overline{az+b}) }U(A(az+b)+B)\right]J .\end{eqnarray*}
 Putting the conditions \eqref{AB} and \eqref{FGGF} into the above equation, we deduce that
  \begin{eqnarray*}&& \mathcal{C}_{a,b,c}W_{f,\varphi} h_I(z)\nonumber\\&=&cF(z)e^{b(Az+B)}\overline{U(\overline{a(Az+B)+b})}
  +\overline{c}G(z)e^{\overline{b}(\overline{A}z+\overline{B})}V(\overline{a}(\overline{A}z+\overline{B})+\overline{b})
\nonumber\\&&+ \left[c F(z)e^{b(Az+B)} \overline{V(\overline{a(Az+B)+b})}+\overline{c}G(z) e^{\overline{b}(\overline{A}z+\overline{B})} U(\overline{a}(\overline{A}z+\overline{B})+\overline{b})\right]J.  \end{eqnarray*} That is to say $W_{f,\varphi} \mathcal{C}_{a,b,c}h_I= \mathcal{C}_{a,b,c} W_{f,\varphi} h_I$.

\emph{ Necessity.} Since $W_{f,\varphi} \mathcal{C}_{a,b,c} h_I= \mathcal{C}_{a,b,c}W_{f,\varphi} h_I$ for any $h\in \mf.$ Particularly, we put $h_I(z)=1$ and $h_I(z)=z$, respectively in the above equation. Indeed, the case $U(z)=1$ and $V(z)=0$ entails \eqref{AB} and the case $U(z)=z$ and $V(z)=0$ ensures \eqref{FGGF}. This completes the proof.  \zb
\begin{rem} Let $a=c=1$ and $b=0$ in Theorem \ref{thm A} , which reduces to Theorem \ref{thm C-}.\end{rem}

\section{complex $\mathcal{C}_{a,b,c}-$symmetric weighted composition operators on $\mf$}

In this section, we study the weighted composition operators which are complex $\mathcal{C}_{a,b,c}-$symmetric on $\mf.$

 First, we analyze the action of the adjoint operator on the sequence of polynomials. Denoting \[K_{z}^{[m]}(q)=q^{m}K_{z}(q)=q^{m}\overline{K_{q}(z)},\] we have \[f^{[m]}(q)= \langle f, K_{q}^{[m]}\rangle.\]

\begin{lem} \label{lemm1}Suppose $W_{f,\varphi}$ is bounded in $\mf$, here $\varphi(p)= pa+b$, with $|a|\le 1, a,b\in \mathbb{C}_I$. Then
  \begin{eqnarray*} W_{f,\varphi}^{*}P_{m}=\sum_{j=0}^{m}\binom{m}{j}K_{b}^{[j]}\overline{a^{j}}\;\overline{f^{[m-j]}(0)}.
    \end{eqnarray*}\end{lem}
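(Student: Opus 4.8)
The plan is to test the asserted identity against an arbitrary $h\in\mf$ and to exploit the derivative evaluation kernels. Since $P_m=K_0^{[m]}$ and $\la g,K_u^{[m]}\ra=g^{[m]}(u)$ for every $g\in\mf$, the adjoint relation together with the conjugate symmetry $\la u,v\ra=\overline{\la v,u\ra}$ of the quaternionic inner product gives
$$\la W_{f,\varphi}^*P_m,h\ra=\la P_m,W_{f,\varphi}h\ra=\overline{\la W_{f,\varphi}h,K_0^{[m]}\ra}=\overline{(W_{f,\varphi}h)^{[m]}(0)}.$$
Thus the whole problem reduces to evaluating the $m$-th slice derivative of $W_{f,\varphi}h$ at the real point $0$, after which I only have to recognize the answer as an inner product against the proposed vector.

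First I would expand both factors of $W_{f,\varphi}h=f\star(C_\varphi h)$ as entire power series, say $f(p)=\sum_{n\ge0}p^na_n$ and $(C_\varphi h)(p)=\sum_{n\ge0}p^nb_n$; this is legitimate because $\varphi$ is affine and slice preserving, so $C_\varphi h$ is again entire slice regular. By the very definition of the $\star$-product on power series, the $m$-th Taylor coefficient of $W_{f,\varphi}h$ is the convolution $\sum_{j=0}^m a_{m-j}b_j$, whence $(W_{f,\varphi}h)^{[m]}(0)=m!\sum_{j=0}^m a_{m-j}b_j$. Using $f^{[n]}(0)=n!\,a_n$ I replace $a_{m-j}$ by $f^{[m-j]}(0)/(m-j)!$. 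For the coefficients $b_j$ I use the chain rule for composition with the affine slice map $\varphi_I(z)=za+b$: writing $h_I=H+KJ$ and differentiating $H(za+b)+K(za+b)J$ termwise $j$ times (ordinary calculus inside the commutative plane $\mc$) yields $(C_\varphi h)^{[j]}(0)=a^jh^{[j]}(b)$, so $b_j=a^jh^{[j]}(b)/j!$. Collecting factors gives
$$(W_{f,\varphi}h)^{[m]}(0)=\sum_{j=0}^m\binom{m}{j}f^{[m-j]}(0)\,a^j\,h^{[j]}(b),$$
with the three factors kept strictly in this order.

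Finally I would take the conjugate, remembering that quaternionic conjugation reverses products, $\overline{xyz}=\overline{z}\,\overline{y}\,\overline{x}$, so that
$$\la W_{f,\varphi}^*P_m,h\ra=\sum_{j=0}^m\binom{m}{j}\overline{h^{[j]}(b)}\,\overline{a^j}\,\overline{f^{[m-j]}(0)}.$$
On the other hand, right-linearity of the inner product in its first slot together with $\la K_b^{[j]},h\ra=\overline{\la h,K_b^{[j]}\ra}=\overline{h^{[j]}(b)}$ shows that this right-hand side equals $\la\sum_{j=0}^m\binom{m}{j}K_b^{[j]}\overline{a^j}\,\overline{f^{[m-j]}(0)},h\ra$. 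Since $h\in\mf$ is arbitrary, the two vectors coincide, which is exactly the claimed formula. I expect the only genuine obstacle to be the noncommutative bookkeeping: keeping the quaternion-valued Taylor coefficients and the scalar $a^j\in\mc$ in the correct order throughout, and correctly tracking the order reversal under conjugation. The analytic ingredients, namely the Cauchy product description of $\star$, the affine chain rule, and the derivative reproducing property $\la g,K_u^{[m]}\ra=g^{[m]}(u)$, are all either definitional or already recorded in the excerpt.
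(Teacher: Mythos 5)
Your proposal is correct and takes essentially the same route as the paper: both reduce the identity to computing the $m$-th slice derivative of $W_{f,\varphi}h$ at $0$ via the reproducing property $\la g,K_u^{[m]}\ra=g^{[m]}(u)$, evaluate that derivative by a Leibniz-type formula $\sum_{j}\binom{m}{j}f^{[m-j]}(0)a^{j}h^{[j]}(b)$, and then reassemble the sum inside the inner product while tracking the order reversal under quaternionic conjugation. The only cosmetic difference is that the paper justifies the Leibniz step by invoking the reduction of the $\star$-product to the pointwise product at real points (Remark \ref{rem R}), whereas you derive the same coefficient convolution directly from the power-series definition of $\star$ — which is, if anything, the slightly more careful justification of that step.
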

  \begin{proof}  For any $g\in \mf$, we have
   \begin{eqnarray*} \langle g, W_{f,\varphi}^{*}P_{m} \rangle &=&\langle W_{f,\varphi} g, P_{m} \rangle
   =\int_{\mathbb{C}_I} \overline{z^{m}}(W_{f,\varphi} g)_I(z)d\mu(z)\\&=&\int_{\mathbb{C}_I}\overline{K_{q}(z)}\overline{z^{m}}(W_{f,\varphi} g)_I(z)d\mu(z)|_{q=0}\\&=&W_{f,\varphi} g^{[m]}(0)=\left(f\star(g\circ \varphi)\right)^{[m]}|_{q=0}\\
   &=&\left(f\cdot(g\circ \varphi)\right)^{[m]}|_{q=0}\\
   &=&\sum_{j=0}^{m}\binom{m}{j}f^{[m-j]}(0)a^{j}g^{[j]}(b)
   \\&=&\sum_{j=0}^{m}\binom{m}{j}f^{[m-j]}(0)a^{j} \langle g, K_{b}^{[j]}\rangle
   \\&=& \left\langle g, \sum_{j=0}^{m}\binom{m}{j}K_{b}^{[j]}\overline{a^{j}}\;\overline{f^{[m-j]}(0)}\right\rangle   \end{eqnarray*}
the fifth equality was due to Remark \ref{rem R}.
\end{proof}

\begin{thm} \label{thml} Let $\mathcal{C}_{a,b,c}$ be a conjugation  on $\mf$ and $W_{f,\varphi}$  be  bounded weighted composition operator on $\mf$ with $f$ is not identically zero and $\varphi(p)= pA+B$, with $|A|\le 1, A, B\in \mathbb{C}_I$. Then the weighted composition operator $W_{f,\varphi}$ is complex $\mathcal{C}_{a,b,c}$-symmetric with respect to $P_{m}, \forall m\in \mathbb{N}$, i.e.
\[W_{f,\varphi}\mathcal{C}_{a,b,c}P_{m}= \mathcal{C}_{a,b,c}W_{f,\varphi}^{*}P_{m}\]
if and only if the function $f$ has the following form
\begin{eqnarray}f_I(z)=C_{1}e^{D_{1}z}+C_{2}e^{D_{2}z}J\label{f1}\end{eqnarray}
with  $C_{1}=F(0)$, $D_{1}=aB-bA+b$, $C_{2}=G(0)=-\frac{c^{2}}{|c|^{2}}\overline{C_2}e^{Bb-\overline{Bb}}$, $D_{2}=aB-\overline{bA}+b$ and $Im(Aa)=0.$
\end{thm}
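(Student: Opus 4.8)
The plan is to verify the identity $W_{f,\varphi}\mathcal{C}_{a,b,c}P_m=\mathcal{C}_{a,b,c}W_{f,\varphi}^{*}P_m$ by computing both sides explicitly on the slice $\mc$ and comparing their $F$- and $G$-components separately (two slice regular functions agreeing on $\mc$ in both components are equal). Throughout I write $f_I(z)=F(z)+G(z)J$ with $F(z)=\sum_n\alpha_n z^n$, $G(z)=\sum_n\gamma_n z^n$, $\alpha_n,\gamma_n\in\mc$, so the Taylor coefficients of $f$ are $a_n=\alpha_n+\gamma_n J$ and $f^{[m-j]}(0)=(m-j)!\,a_{m-j}$.

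For the left-hand side I first record that $\mathcal{C}P_m=P_m$, since $\mathcal{C}$ sends $z^m$ to $\overline{\bar z^m}=z^m$; hence by \eqref{cabcd} (with $d=0$) one gets $\mathcal{C}_{a,b,c}P_m(z)=ce^{bz}(az+b)^m$. As the weight $ce^{bz}$ and the factor $(az+b)^m$ are $\mc$-valued, the relevant $\star$-products collapse to pointwise products, and applying $W_{f,\varphi}$ with $\varphi_I(z)=Az+B$ through \eqref{star} produces an expression whose $F$-part is $cF(z)e^{b(Az+B)}(aAz+aB+b)^m$ and whose $G$-part is $\bar cG(z)e^{\bar b(\bar Az+\bar B)}(\overline{aA}z+\overline{aB}+\bar b)^m$. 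For the right-hand side I insert Lemma \ref{lemm1} with $\varphi(p)=pA+B$, restrict $K_B^{[j]}$ to $\mc$ as $z^je^{\bar Bz}$, and use $\overline{f^{[m-j]}(0)}=(m-j)!(\overline{\alpha_{m-j}}-\gamma_{m-j}J)$ to read off the $F$- and $G$-parts of $W_{f,\varphi}^{*}P_m$; then I apply $\mathcal{C}_{a,b,c}$ directly via its definition (conjugate the two components in $\bar z$, shift $z\mapsto az+b$, multiply by $ce^{bz}$). Comparing components gives, for every $m$, two scalar identities of the shape (constant)$\times e^{(\text{linear})z}\times(\text{linear in }z)^m$.

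For necessity I would use that such an identity for all $m$ forces equality of prefactors and of the linear bases. Taking $m=0$ in the $F$-identity yields $F(z)e^{b(Az+B)}=\alpha_0e^{(b+Ba)z+Bb}$, hence $F(z)=F(0)e^{D_1z}$ with $D_1=aB-bA+b$; the $m=0$ $G$-identity yields $G(z)=G(0)e^{D_2z}$ with $D_2=aB-\overline{bA}+b$, together with the constant-prefactor relation $C_2=G(0)=-\tfrac{c^2}{|c|^2}\overline{C_2}e^{Bb-\overline{Bb}}$. Feeding these exponential forms into the $G$-identity for $m\ge1$ and matching the $z$-coefficient of the two linear bases forces $\overline{aA}=Aa$, i.e. $\mathrm{Im}(Aa)=0$; this is precisely where the genuine constraint is produced, the $F$-identity being automatically consistent since $a,A\in\mc$ commute.

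For sufficiency I would substitute $f_I(z)=C_1e^{D_1z}+C_2e^{D_2z}J$ with the prescribed parameters into both sides: writing $\alpha_{m-j}=C_1D_1^{m-j}/(m-j)!$ and $\gamma_{m-j}=C_2D_2^{m-j}/(m-j)!$, the binomial identity $\sum_{k}\binom{m}{k}D^k[A(az+b)]^{m-k}=(D+A(az+b))^m$ collapses each sum to a single exponential times an $m$-th power, after which the prescribed $D_1,D_2$ and the relation on $C_2$ make the $F$- and $G$-parts agree for all $m$. The main obstacle is purely the bookkeeping in computing $\mathcal{C}_{a,b,c}W_{f,\varphi}^{*}P_m$: since $\mathcal{C}_{a,b,c}$ is only $\mc$-anti-linear, one cannot pull the quaternionic coefficients $\overline{f^{[m-j]}(0)}$ through it termwise, so one must first isolate the $F$- and $G$-components of $W_{f,\varphi}^{*}P_m$ and only then apply $\mathcal{C}_{a,b,c}$, all while keeping straight the $J$-twist in \eqref{star}, the rule $\overline{z+wJ}=\bar z-wJ$, and the various conjugates of $A,B,a,b$.
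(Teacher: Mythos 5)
Your proposal is correct and follows essentially the same route as the paper: evaluate both sides on the monomials $P_m$ using Lemma \ref{lemm1} for $W_{f,\varphi}^{*}P_m$, split everything into $F$- and $G$-components on the slice $\mc$, extract the exponential forms of $F$ and $G$ together with the relation on $C_2$ from the $m=0$ identity, and obtain $\mathrm{Im}(Aa)=0$ from the $G$-component mismatch $Aa$ versus $\overline{aA}$ at $m=1$, with sufficiency following from the same closed-form computation via the binomial collapse. The one point to keep tight in a write-up is that for $m\ge 1$ an identity of powers of two affine functions only forces the affine functions to agree after dividing out the $m=0$ prefactors, which is exactly why both $m=0$ and $m=1$ are needed, as in the paper.
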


\begin{proof} Suppose $W_{f,\varphi}$ is complex $\mathcal{C}_{a,b,c}$-symmetric with respect to $P_{m}, \forall m\in \mathbb{N}$. Taking $P_{0}(z)=1$, we have $\mathcal{C}_{a,b,c}P_{0}(z)=ce^{bz}$ and hence
 \begin{eqnarray*}
W_{f,\varphi}\mathcal{C}_{a,b,c}P_{0}(z)&=&(F(z)+G(z)J)\star ce^{b\varphi(z)}\\
&=&F(z)ce^{b\varphi(z)}+G(z)\overline{ce^{b\varphi(\bar{z})}}J.
 \end{eqnarray*}
 On the other hand, by Lemma \ref{lemm1}, we know \[W_{f,\varphi}^{*}P_{0}=K_{\varphi(0)}(z)\;\overline{F(0)+G(0)J},\] which gives
 \begin{eqnarray*}
 \mathcal{C}_{a,b,c}W_{f,\varphi}^{*}P_{0}(z)=ce^{bz}e^{\varphi(0)(az+b)}(F(0)-\overline{G(0)}J)
  \end{eqnarray*}
Therefore, the equality  \[W_{f,\varphi}\mathcal{C}_{a,b,c}P_{0}= \mathcal{C}_{a,b,c}W_{f,\varphi}^{*}P_{0}\] reduces to
   \begin{eqnarray*}
   F(z)&=&F(0)e^{(aB-bA+b) z}=C_{1}e^{D_{1}z},\\
   G(z)&=&-\frac{c^{2}}{|c|^{2}}\overline{G(0)}e^{(aB-\overline{bA}+b)z}e^{Bb-\overline{Bb}}=C_{2}e^{D_{2}z},
    \end{eqnarray*}
    where $C_{1}=F(0)$, $D_{1}=aB-bA+b$, $C_{2}=-\frac{c^{2}}{|c|^{2}}\overline{G(0)}e^{Bb-\overline{Bb}}$ and $D_{2}=aB-\overline{bA}+b.$ We also note that $F(0)$ and $G(0)$ are not all zero.

Furthermore suppose that the $f$ has the form alike \eqref{f1}, by Lemma \ref{lemm1}, we have
  \begin{eqnarray*}
  W_{f,\varphi}^{*}P_{m}(z)&=&\sum_{j=0}^{m}\binom{m}{j}z^{j}e^{\overline{B}z}\overline{A^{j}}
  \;\left(\overline{C_{1}D_{1}^{m-j}+C_{2}D^{m-j}_{2}J}\right).
      \end{eqnarray*}
Then
  \begin{eqnarray}
   && \mathcal{C}_{a,b,c}W_{f,\varphi}^{*}P_{m}(z)\nonumber\\&=&ce^{bz} \sum_{j=0}^{m}\binom{m}{j}(az+b)^{j}e^{B(az+b)}A^{j}\;\left(C_{1}D_{1}^{m-j}-\overline{C_{2}D^{m-j}_{2}}J\right)\nonumber\\
    &=&ce^{(b+aB)z+bB} \left(C_{1}[A(az+b)+D_{1}]^{m}-\overline{C_{2}}[A(az+b)+\overline{D_{2}}]^{m}J\right).\;\;\quad\quad\label{lh1}
    \end{eqnarray}
On the other hand, we have
  \begin{eqnarray}
   && W_{f,\varphi}\mathcal{C}_{a,b,c}P_{m}(z)\nonumber\\&=&  W_{f,\varphi}\left( ce^{bz}(az+b)^{m}\right) \nonumber\\
   &=&C_{1}ce^{(D_{1}+bA)z+bB}(a(Az+B)+b)^{m}\nonumber\\
   &&+C_{2}\overline{c}e^{(D_{2}+\overline{bA})z+\overline{bB}}(\overline{aA}z+\overline{aB+b})^{m}J
  \nonumber\\   &=&C_{1}ce^{(b+aB)z+bB}(A(az+b)+D_{1})^{m}\nonumber\\
   &&+C_{2}\overline{c}e^{(b+aB)z+\overline{bB}}(\overline{aA}z+Ab+\overline{D_{2}})^{m}J. \;\; \label{rh1}
     \end{eqnarray}

Now, comparing \eqref{lh1} and \eqref{rh1}, the equality
    \[ W_{f,\varphi}\mathcal{C}_{a,b,c}P_{m}(z)=\mathcal{C}_{a,b,c}W_{f,\varphi}^{*}P_{m}(z)\]
further requires \[-ce^{ bB} \overline{C_{2}}(A(az+b)+\overline{D_{2}})^{m}
=C_{2}\overline{c}e^{ \overline{bB}}(\overline{aA}z+Ab+\overline{D_{2}})^{m},\] which entails that
  \begin{eqnarray*}  \overline{C_{2}}(A az+Ab+\overline{D_{2}})^{m}=\overline{G(0)}(\overline{aA}z+Ab+\overline{D_{2}})^{m}
  \end{eqnarray*} for any $m\in \mathbb{N}.$ Particularly, letting $m=0$ and $m=1,$ it follows that \begin{eqnarray*} C_2=G(0)\;\;\mbox{and}\;\;Aa=\overline{Aa}.\end{eqnarray*} That further implies $C_{2}=-\frac{c^{2}}{|c|^{2}}\overline{C_2}e^{Bb-\overline{Bb}}$ and $Im(Aa)=0.$

   Conversely, based on the assumptions in \eqref{f1}, we can easily obtain the  $W_{f,\varphi}$ is complex $\mathcal{C}_{a,b,c}$-symmetric with respect to $P_{m}$ for all $m\in \mathbb{N}.$ This completes the proof.
\end{proof}
Further by Proposition \ref{prop spe}, we establish the conditions ensuring  the bounded weighted composition operators complex $\mathcal{C}_{a,b,c}-$symmetric on $\mf.$

\begin{thm}  Let $\mathcal{C}_{a,b,c}$ be a conjugation  on $\mf$, $f$ and $\varphi$ be two slice regular functions with $f$ not identically zero. Let further, $W_{f,\varphi}$  be  bounded  $\mf$ $($i.e.  $\varphi(p)= pA+B$, with $|A|\le 1, A, B\in \mathbb{C}_I)$. Then $W_{f,\varphi}$ is complex $\mathcal{C}_{a,b,c}$-symmetric  if and only if the following conditions hold:
\begin{eqnarray*}f_I(z)=C_{1}e^{D_{1}z}+C_{2}e^{D_{2}z}J \end{eqnarray*}
with  $C_{1}=F(0)$,  $C_{2}=G(0) $, $D_{1}=aB-bA+b$, $D_{2}=aB-\overline{bA}+b$,
$Im(Aa)=0$ and
\begin{eqnarray}\left\{
  \begin{array}{ll}
    \mbox{either}\; |A|<1, \\
    \mbox{or}\; |A|=1,\; D_1+A\overline{B}=0\;\mbox{and}\; D_2+\overline{A}B=0.
  \end{array}
\right.\label{ACD}\end{eqnarray}
\end{thm}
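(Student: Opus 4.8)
The plan is to derive this theorem essentially for free from Theorem \ref{thml} and Proposition \ref{prop spe}, the bridge between them being the observation that complex $\mathcal{C}_{a,b,c}$-symmetry of a bounded operator is fully detected on the orthogonal basis $\{P_m\}_{m\ge 0}$. So the whole argument reduces to a functional-analytic reduction step plus two quotations.

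First I would record that reduction. By Definition \ref{complex-s}, $W_{f,\varphi}$ is complex $\mathcal{C}_{a,b,c}$-symmetric exactly when $W_{f,\varphi}\mathcal{C}_{a,b,c}=\mathcal{C}_{a,b,c}W_{f,\varphi}^{*}$ as maps on $\mf$. Both sides are bounded and right-anti-linear: the left side composes the bounded right-linear $W_{f,\varphi}$ with the isometric conjugation $\mathcal{C}_{a,b,c}$, and the right side composes $\mathcal{C}_{a,b,c}$ with the bounded $W_{f,\varphi}^{*}$. Since $\{P_m\}$ is an orthogonal basis of $\mf$ and every $h\in\mf$ is a norm-limit of finite right-combinations $\sum_m P_m a_m$, right-anti-linearity gives $T\big(\sum_m P_m a_m\big)=\sum_m (TP_m)\overline{a_m}$ for either map $T$; hence two bounded right-anti-linear maps agreeing on every $P_m$ agree on all of $\mf$. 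Consequently $W_{f,\varphi}$ is complex $\mathcal{C}_{a,b,c}$-symmetric if and only if $W_{f,\varphi}\mathcal{C}_{a,b,c}P_m=\mathcal{C}_{a,b,c}W_{f,\varphi}^{*}P_m$ for all $m\in\mathbb{N}$, which is precisely the hypothesis analysed in Theorem \ref{thml}.

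With this identification, necessity is immediate: assuming $W_{f,\varphi}$ bounded and complex $\mathcal{C}_{a,b,c}$-symmetric, Theorem \ref{thml} forces
\[f_I(z)=C_1e^{D_1z}+C_2e^{D_2z}J\]
with $C_1=F(0)$, $C_2=G(0)$, $D_1=aB-bA+b$, $D_2=aB-\overline{bA}+b$ and $Im(Aa)=0$. I would then feed this explicit exponential form of $f$ into Proposition \ref{prop spe}, whose data $C_i,D_i,A,B$ match ours verbatim: because $W_{f,\varphi}$ is assumed bounded, Proposition \ref{prop spe} asserts that either $|A|<1$, or else $|A|=1$ together with $D_1+A\overline{B}=0$ and $D_2+\overline{A}B=0$, i.e. exactly \eqref{ACD}. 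This supplies all the stated conditions.

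For sufficiency I would run the chain backwards. Given $f$ of the displayed form with the listed relations and \eqref{ACD}, Proposition \ref{prop spe} first guarantees $W_{f,\varphi}$ is bounded, so $W_{f,\varphi}^{*}$ is well defined; the converse half of Theorem \ref{thml} then yields $W_{f,\varphi}\mathcal{C}_{a,b,c}P_m=\mathcal{C}_{a,b,c}W_{f,\varphi}^{*}P_m$ for every $m$, and the reduction step lifts this from the basis to the operator identity. The one point deserving genuine care---and thus the main obstacle---is that reduction step: one must check that $\mathcal{C}_{a,b,c}W_{f,\varphi}^{*}$ and $W_{f,\varphi}\mathcal{C}_{a,b,c}$ are each right-anti-linear in the sense of Definition \ref{defn anti}, so that testing against right-combinations $\sum_m P_m a_m$ is legitimate, and that boundedness lets one pass from the dense span of $\{P_m\}$ to all of $\mf$. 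Once these continuity and linearity bookkeeping items are in place, the substantive computations are already carried out inside Theorem \ref{thml} and Proposition \ref{prop spe}.
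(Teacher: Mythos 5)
Your proposal is correct and takes essentially the same route as the paper: necessity by quoting Theorem \ref{thml} for the exponential form of $f$ and then Proposition \ref{prop spe} for condition \eqref{ACD}, and sufficiency by reversing the chain and passing from the basis $\{P_m\}$ to all of $\mf$. The only difference is that you make explicit the right-anti-linearity and boundedness bookkeeping needed to lift the identity from the $P_m$ to the whole space, a step the paper states without elaboration.
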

\pf \emph{Necessity}.  Theorem \ref{thml}  entails $f$ must be the form in \eqref{f1}.  Furthermore,
since $W_{f,\varphi}$ is bounded on $\mf$ and $f_I(z)=C_{1}e^{D_{1}z}+C_{2}e^{D_{2}z}J$, by Proposition \ref{prop spe}, we obtain \eqref{ACD}.

 \emph{Sufficiency}. Theorem  \ref{thml} together with  the facts  $P_{m}$ is a basis of $\mf$ and the weighted composition operator   $W_{f,\varphi}$ is bounded imply  \[ W_{f,\varphi}\mathcal{C}_{a,b,c} h(z)=\mathcal{C}_{a,b,c}W_{f,\varphi}^{*}h(z)\] for any $h\in \mf.$
 That is to say $W_{f,\varphi}$ is a complex $\mathcal{C}_{a,b,c}$-symmetric operator on $\mf$.
\zb

\section{appendix}

The quaternion exponential function $e_{\star}^{pq}=\sum_{n=0}^\infty \frac{p^n q^n}{n!}$ admits a closed  expression.

\begin{thm}\label{rem closed} For any $p=a+\omega b$, $q=c+\eta d$, where $\omega, \eta  $ belong to the 2-sphere $\mathbb{S}$. Then  function $e_{\star}^{pq}$   can be expressed as
\begin{eqnarray}e_{\star}^{pq}&=&\frac{1}{2}(\cos (bd-ac)+\omega \sin(bd-ac))e^{ac+bd}(1+\omega\eta)\nonumber\\&&+\frac{1}{2}(\cos (ac+bd)-\omega\sin (bd+ac))e^{ac-bd}(1-\omega\eta). \label{cc1}\end{eqnarray}  \end{thm}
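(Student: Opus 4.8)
The plan is to compute the series $e_\star^{pq}=\sum_{n\ge0}\frac{p^nq^n}{n!}$ directly in polar form and then regroup the outcome along the two combinations $1\pm\omega\eta$. First I would write each factor in its polar representation relative to its own imaginary unit: setting $r=|p|=\sqrt{a^2+b^2}$, $s=|q|=\sqrt{c^2+d^2}$ and choosing angles $\phi,\psi$ with $a=r\cos\phi$, $b=r\sin\phi$, $c=s\cos\psi$, $d=s\sin\psi$, the powers obey De Moivre's rule $p^n=r^n(\cos n\phi+\omega\sin n\phi)$ and $q^n=s^n(\cos n\psi+\eta\sin n\psi)$, since $\omega^2=\eta^2=-1$. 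Because $|p^nq^n|=(rs)^n$ and $\sum_n (rs)^n/n!=e^{rs}<\infty$, the series converges absolutely, which licenses every rearrangement as well as the interchange of summation with the ($n$-independent) algebraic regroupings below.

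Next I would expand the quaternionic product $p^nq^n$ into the four products of cosines and sines and pass to sum/difference angles via the product-to-sum identities, writing $\theta_\pm=n(\phi\pm\psi)$. The crucial algebraic step is to observe that, keeping the noncommutativity $\omega\eta\neq\eta\omega$ in mind, the resulting four-term expression collapses into the factorization
$$p^nq^n=\tfrac{r^ns^n}{2}\big[(\cos\theta_-+\omega\sin\theta_-)(1+\omega\eta)+(\cos\theta_++\omega\sin\theta_+)(1-\omega\eta)\big],$$
which one verifies by expanding $(\cos\alpha+\omega\sin\alpha)(1\pm\omega\eta)$ with $\omega^2=-1$ and checking that the coefficients of $1,\omega,\eta,\omega\eta$ reproduce exactly the mixed terms. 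Since $1\pm\omega\eta$ do not depend on $n$, I may pull them outside the summation.

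Then I would sum each scalar-valued series separately. For real $t=rs$ and an angle $\beta$ one has $\sum_{n\ge0}\frac{t^n}{n!}(\cos n\beta+\omega\sin n\beta)=\sum_{n\ge0}\frac{(te^{\omega\beta})^n}{n!}=e^{te^{\omega\beta}}=e^{t\cos\beta}\big(\cos(t\sin\beta)+\omega\sin(t\sin\beta)\big)$, because $\omega$ plays the role of the imaginary unit in the plane $\mathbb{C}_\omega$. Applying this with $\beta=\phi\mp\psi$ and using the angle-addition formulas to rewrite $rs\cos(\phi\mp\psi)$ and $rs\sin(\phi\mp\psi)$ back in terms of $a,b,c,d$ (these give $ac\pm bd$ for the exponents and $bc\mp ad$ for the phases) produces the two exponential factors multiplying $1+\omega\eta$ and $1-\omega\eta$, which is precisely the closed form asserted in \eqref{cc1}.

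The routine but error-prone part — and the step I expect to be the main obstacle — is the middle regrouping: one must carry the noncommutative products $\omega\eta$ through carefully and confirm that the separate $\omega$- and $\eta$-terms recombine exactly into the $(1\pm\omega\eta)$ pattern, and then track the trigonometric arguments through the angle-addition identities so that the exponents and phases emerge in the stated $a,b,c,d$ form. The degenerate case $b=0$ or $d=0$ (where $p$ or $q$ is real and the polar angle is not uniquely determined) causes no difficulty, since the corresponding sine terms vanish and the identity follows either by the same computation or by continuity.
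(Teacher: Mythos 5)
Your proof is correct and takes essentially the same route as the paper's: both write $p$ and $q$ in polar form, apply De Moivre and the product-to-sum identities, and evaluate the resulting series via $\sum_n t^n(\cos n\beta+\omega\sin n\beta)/n!=e^{te^{\omega\beta}}$, the only difference being that you perform the $(1\pm\omega\eta)$ regrouping before summing while the paper sums the four components $K_1,\dots,K_4$ separately and regroups afterwards. One caveat: the phases you derive, $bc\mp ad$, agree with the paper's intermediate closed forms for the $K_i$ but not with the phases $bd-ac$ and $ac+bd$ printed in \eqref{cc1}, which appear to be typographical slips in the statement rather than an error in your argument.
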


\pf
Denoting \[p=a+\omega b=r_1(\cos x+ \omega \sin x)\] and \[q=c+\eta b=r_2(\cos y+\eta \sin y),\]  by direct computation, we have
\begin{eqnarray*}p^nq^n&=&r_1^nr_2^n(\cos nx+ \omega\sin nx)(\cos ny+\eta \sin ny)\nonumber\\&=&r_1^nr_2^n \cos nx \cos ny + r_1^nr_2^n \omega \sin nx \cos ny \\&& +r_1^nr_2^n \eta \cos nx \sin ny + r_1^nr_2^n \omega\eta\sin nx \sin ny.\end{eqnarray*}
Thus, the quaternion exponential can be split into four series,
\begin{align}e_{\star}^{pq}=K_1+K_2+K_3+K_4\label{pq}\end{align} where
\begin{eqnarray*}K_1:&=&\sum_{n=0}^\infty \frac{r_1^n r_2^n \cos nx \cos ny}{n!},\\ K_2:&=& \omega \sum_{n=0}^\infty \frac{r_1^n r_2^n \sin nx \cos ny}{n!},
\\ K_3:&=&\eta \sum_{n=0}^\infty \frac{r_1^n r_2^n \cos nx \sin ny}{n!},
\\ K_4:&=& \omega\eta\sum_{n=0}^\infty \frac{r_1^n r_2^n \sin  nx \sin ny}{n!}.\end{eqnarray*}

In the sequel, we   compute  the closed expression of each $K_i$, $i=1, 2, 3, 4$.
\begin{eqnarray*} K_1&=&\frac{1}{2} \sum_{n=0}^\infty \frac{(r_1r_2)^n(\cos n(x-y)+\cos n(x+y))}{n!}\nonumber\\&=&\frac{1}{4} \sum_{n=0}^\infty \frac{(r_1r_2)^n(e^{i n(x-y)}+e^{-i n(x-y)}+e^{i n(x+y)}+e^{-i n(x+y)}}{n!}\nonumber\\&=& \frac{1}{4}\left[e^{r_1r_2e^{i(x-y)}}+
e^{r_1r_2e^{-i(x-y)}}+e^{r_1r_2e^{i(x+y)}}+e^{r_1r_2e^{-i(x+y)}}\right]\\&=&
\frac{1}{2} [e^{r_1r_2\cos(x-y)}\cos (r_1r_2 \sin(x-y))\\&&+e^{r_1r_2\cos(x+y)}\cos (r_1r_2 \sin(x+y))].\end{eqnarray*}
Similarly, we obtain
\begin{eqnarray*} K_2&=& \frac{\omega}{2} [e^{r_1r_2\cos(x-y)}\sin (r_1r_2 \sin(x-y))\nonumber
\\&&+e^{r_1r_2\cos(x+y)}\sin (r_1r_2 \sin(x+y))]\nonumber\\
K_3&=&\frac{\eta}{2} [e^{r_1r_2\cos(x+y)}\sin (r_1r_2 \sin(x+y))\nonumber\\&&-e^{r_1r_2\cos(x-y)}\sin (r_1r_2 \sin(x-y))] \nonumber\\
K_4&=&\frac{\omega\eta}{2} [e^{r_1r_2\cos(x-y)}\cos (r_1r_2 \sin(x-y))\nonumber\\&&-e^{r_1r_2\cos(x+y)}\cos (r_1r_2 \sin(x+y))] \nonumber.\end{eqnarray*}
Putting the terms $K_i$, $i=1,2,3,4$ into \eqref{pq}, eliminating the angular variables and grouping corresponding terms, we obtain the desired closed expression \eqref{cc1}.
\zb

 \footnotesize{\bf Acknowledgments} The first author would like to express his gratitude to Zhenghua Xu (Hefei University of Technology) for many interesting discussions.  P. Lian is supported by the Tianjin Normal University starting grant No.004337. Y. X. Liang is supported by the National Natural Science Foundation of China (Grant No. 11701422).

\end{document}